\documentclass[11pt]{article}

\usepackage[utf8]{inputenc}
\usepackage[T1]{fontenc}

\usepackage{amsmath,amsfonts,amsthm,amssymb}
\usepackage{bm,bbm}
\usepackage{enumerate}
\usepackage{stmaryrd}
\usepackage{color}
\usepackage{tikz}
\usepackage{float}
\usepackage{graphicx}
\usepackage{subfigure}
\usetikzlibrary{decorations.pathreplacing}

\theoremstyle{plain}
\newtheorem{proposition}{Proposition}[section]
\newtheorem{lemma}[proposition]{Lemma}
\newtheorem{theorem}[proposition]{Theorem}
\newtheorem{corollary}[proposition]{Corollary}

\theoremstyle{definition}
\newtheorem{definition}[proposition]{Definition}
\newtheorem{remark}[proposition]{Remark}
\newtheorem{example}[proposition]{Example}


%
{%
\end{oldthebibliography}%
}

\newcommand{\R}{\mathbb{R}}

\newcommand{\N}{\mathbb{N}}

\newcommand{\1}{\mathbf{1}}

\newcommand{\G}{\mathbb{G}}

\newcommand{\cI}{\mathcal{I}}
\newcommand{\cE}{\mathcal{E}}

\newcommand{\cG}{\mathcal{G}}
\newcommand{\cT}{\mathcal{T}}
\newcommand{\cU}{\mathcal{U}}
\newcommand{\cK}{\mathcal{K}}

\newcommand{\eps}{\varepsilon}
\newcommand{\smsp}{\,}
\DeclareMathOperator{\Law}{Law}
\DeclareMathOperator{\Unif}{Unif}
\DeclareMathOperator{\Beta}{Beta}

\newcommand{\kn}{\tfrac{k}{n}}
\newcommand{\konen}{\tfrac{k-1}{n}}

\numberwithin{equation}{section}


\usepackage[pdfborder={0 0 0}]{hyperref}
\hypersetup{
  urlcolor = black,
  pdfauthor = {Marcel Nutz, Jaime San Martin, Xiaowei Tan},
  pdfkeywords = {Mean Field Game; n-Player Game; Optimal Stopping},
  pdftitle = {Convergence to the Mean Field Game Limit: A Case Study},
  pdfsubject = {Convergence to the Mean Field Game Limit: A Case Study},
  pdfpagemode = UseNone
}

\begin{document}

\title{\vspace{-3.5em}Convergence to the Mean Field Game Limit:\\A Case Study}

\author{Marcel Nutz\thanks{
  Departments of Statistics and Mathematics, Columbia University, mnutz@columbia.edu. Research supported by an Alfred P.\ Sloan Fellowship and NSF Grants DMS-1512900 and DMS-1812661. MN is grateful to Daniel Lacker and Yuchong Zhang for numerous discussions which have greatly improved this paper.
  } 
  \and 
  Jaime San Martin\thanks{CMM-DIM, UMI 2807 CNRS, Universidad de Chile, jsanmart@dim.uchile.cl. Partially funded by BASAL AFB170001. JSM is thankful for the hospitality of the Statistics Department at Columbia University.
  }
  \and 
  Xiaowei Tan\thanks{Department of Mathematics, Columbia University, xt2161@columbia.edu.
  }}

\maketitle \vspace{-1em}

\begin{abstract}
We study the convergence of Nash equilibria in a game of optimal stopping. If the associated mean field game has a unique equilibrium, any sequence of $n$-player equilibria converges to it as $n\to\infty$.
However, both the finite and infinite player versions of the game often admit multiple equilibria. We show that  mean field equilibria satisfying a transversality condition are limit points of $n$-player equilibria, but we also exhibit a remarkable class of mean field equilibria that are not limits, thus questioning their interpretation as ``large $n$'' equilibria.
\end{abstract}

\vspace{.5em}

{\small
\noindent \emph{Keywords:} Mean Field Game; $n$-Player Game; Equilibrium; Optimal Stopping

\noindent \emph{AMS 2010 Subject Classification:}
91A13; 60G40; 91A15; 91A55
}

\section{Introduction}
\label{sec:Introduction}

Mean field games were introduced by \cite{LasryLions.06a,LasryLions.06b,LasryLions.07} and \cite{HuangMalhameCaines.07, HuangMalhameCaines.06} to overcome the notorious intractability of $n$-player games. Two key simplifications are made. First, agents interact symmetrically through the empirical distribution of their states. Second, by formally letting $n\to\infty$, one passes to a representative agent whose actions do not affect this distribution because each individual agent becomes negligible. Thus, the mean field game is seen as an approximation of the $n$-player game for large $n$. We refer to the lecture notes~\cite{Cardaliaguet.13} and the monographs~\cite{BensoussanFrehseYam.13,CarmonaDelaRue.17a,CarmonaDelaRue.17b} and their extensive references for further background.

In this paper, we conduct a case study of an $n$-player game of optimal stopping where multiple equilibria may occur naturally. We formulate an associated mean field game and highlight that certain mean field equilibria are limits of $n$-player equilibria while others are not, and study how to distinguish them. Equilibria that are not limit points are questionable from the point of view of applications, at least if they are motivated as ``$n$-player games with large $n$.'' 

Several ways of connecting $n$-player and mean field games have been studied in the literature. In many cases it is easier to establish the reverse direction, namely that a given mean field equilibrium induces an \emph{approximate} Nash equilibrium in the $n$-player game for large $n$. This goes back to \cite{HuangMalhameCaines.06} and is by now established in some generality, see in particular \cite{Lacker.14} for diffusion control,  \cite{CarmonaDelarueLacker.17} for games of timing or~\cite{CecchinFischer.18} for finite state games (but see also \cite{CampiFischer.18} for a counterexample in a degenerate case with absorption). It then follows, conversely, that mean field equilibria are limits of approximate $n$-player equilibria. However, we emphasize that approximate and actual Nash equilibria may look quite different, and in particular one cannot expect in general that there is a true Nash equilibrium in the proximity of an approximate one.

The convergence of $n$-player Nash equilibria to the mean field limit is often more delicate.
The  deep result of \cite{CardaliaguetDelarueLasryLions.15} shows convergence for a class of (closed-loop) games where agents choose drifts of diffusions. In their setting, the mean field game has a unique equilibrium as a consequence of the so-called monotonicity condition~\cite{LasryLions.06a} which postulates that it is disadvantageous for agents' states to be close to one another. In a related but different (open-loop) framework, and without imposing uniqueness, \cite{Fischer.14} obtains convergence under the assumption that the limiting measure flow is deterministic. More comprehensively, \cite{Lacker.14} shows that $n$-player equilibria converge to a weak notion of mean field equilibria which can include mixtures of deterministic equilibria, for a general class of diffusion-control games. A corresponding result for games of timing is established in \cite{CarmonaDelarueLacker.17}. Most recently, \cite{Lacker.18b} provides results along the lines of~\cite{Lacker.14} for the closed-loop case.
Convergence has also been shown in a number of more specific problems, for instance stationary mean field games~\cite{LasryLions.06a}, linear-quadratic problems~\cite{Bardi.12} or a game of Poissonian control~\cite{NutzZhang.17}, among others. However, to the best of our knowledge, the question which mean field equilibria are limit points of (true) $n$-player equilibria has not been emphasized as such in the literature. We can mention the parallel work~\cite{CecchinDaiPraFischerPelino.18} on a two-state game: the game has unique $n$-player equilibria and these converge to a mean field equilibrium as expected; however, a second, less plausible mean field solution can appear for certain parameter values and this solution is not a limit. Another interesting parallel work \cite{DelarueFoguenTchuendom.18} studies several approaches of selecting an equilibrium in a linear-quadratic mean field game with multiple equilibria, including the convergence of $n$-player equilibria. Different approaches are shown to select different equilibria.

From the perspective of mean field games, being a limit point of $n$-player equilibria can be seen as a stability property of equilibria with respect to the number of players. We are not aware of a systematic study in this direction (but see~\cite{BrianiCardaliaguet.18} for a recent investigation of a different stability property that is potentially related).  Since mean field equilibria are often motivated as ``large~$n$'' equilibria, it seems desirable to understand the phenomenon in some generality and at least establish sufficient conditions. A general formulation and investigation of this stability seems wide open at this time, whence our focus on a case study in the present paper.


\subsection{Synopsis}

We start by introducing an $n$-player game of optimal stopping inspired by~\cite{Bertucci.17,BouveretDumitrescuTankov.18,CarmonaDelarueLacker.17,Nutz.16} and the literature on bank-runs following~\cite{DiamondDybvig.83}. In addition to their i.i.d.\ signals, players observe how many other players have already stopped. A crucial feature is that whenever an agent leaves the game, staying in the game becomes less attractive for the remaining agents. For instance, this may reflect that the bank is more likely to default if other clients withdraw their savings. In particular, the game satisfies the opposite of Lasry and Lions' monotonicity condition, or \emph{strategic complementarity} in Economics terminology~\cite{BulowGeanakoplosKlemperer.85}. Indeed, the model exhibits a ``flocking'' or ``herding'' behavior where groups of agents can collectively decide to stop or not. We will see that these choices can naturally give rise to multiple equilibria; more precisely, they parametrize the full range of $n$-player equilibria.

Next, we review the mean field version of the game which was introduced in~\cite{Nutz.16} without discussing the $n$-player game. Enhancing slightly a result of~\cite{Nutz.16}, mean field equilibria are described by a simple equation: for any equilibrium, the proportion $\rho(t)$ of agents that have stopped by time~$t$ is a zero of a deterministic function $g_{t}$ on $[0,1]$ as is Figure~\ref{fig:intro}. More generally, any equilibrium $t\mapsto \rho(t)$ is characterized as an increasing, right-continuous selection of such zeros. In Figure~\ref{fig:intro}, we can distinguish several types of zeros: increasing-transversal~($i$), tangential~($t$) and decreasing-transversal~($d$).
\begin{figure}[b]
\begin{center}
\begin{tikzpicture}[scale=1.1]
\draw[line width=1pt,-latex] (-5,0) to (5,0) node[right=2pt] {$u$};
\fill (-4.25,0) circle (1.5pt) node[above=1pt] at (-4.25,0) {0};
\fill (4.25,0) circle (1.5pt) node[above=1pt] at (4.25,0) {1};
\draw[line width=1.5pt] (-4.25,-.5) 
to[out=0,in=225] (-3.25,0) 
to[out=45,in=135] (-1.75,0) 
to[out=-55,in=180] (-.575,0) 
to[out=0,in=235] (.5,0) 
to[out=45,in=135] (2,0)
to[out=-45,in=225] (3.5,0)
to[out=45,in=190] (4.25,.5)
node[above=1pt] {$g_{t}(u)$};
\fill (-3.25,0) circle (2pt) node[above=2pt]{$i$};
\fill (-1.75,0) circle (2pt) node[above=2pt]{$d$};
\fill (-.575,0) circle (2pt) node[above=2pt]{$t$};
\fill (.5,0) circle (2pt) node[above=2pt]{$i$};
\fill (2,0) circle (2pt) node[above=2pt]{$d$};
\fill (3.5,0) circle (2pt) node[above=2pt]{$i$};
\end{tikzpicture}
\end{center}
\caption{Types of mean field equilibria at a fixed time $t$}
\label{fig:intro}
\end{figure}
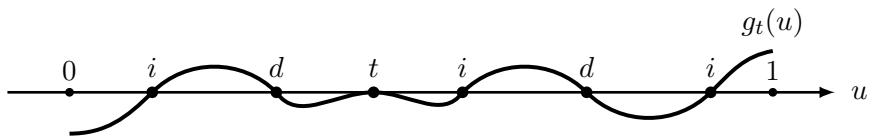
These types are related to how concentrated the distribution of the agents' signals is in a neighborhood of the zero, relative to the strength of interaction. Intuitively, tangential solutions are delicate in that they may disappear if Figure~\ref{fig:intro} is perturbed, whereas the transversal solutions are stable in this sense.

We then turn to our main question and study which mean field equilibria are limits of $n$-player equilibria. Roughly, the main result is that
\begin{enumerate}
\item Increasing-transversal solutions are limits of $n$-player equilibria,
\item decreasing-transversal solutions \emph{fail} to be limits,
\item tangential solutions can but need not be limits.
\end{enumerate}

Specifically, we first consider the minimal and maximal equilibria, corresponding to the left- and right-most solutions in Figure~\ref{fig:intro}. The $n$-player game also has such extremal equilibria and these yield natural candidates for sequences converging to their mean field counterparts. After introducing appropriate notions for dynamic equilibria, we show that this convergence indeed holds, under the condition that the solutions are increasing-transversal (on a sufficiently large set of times $t$). However, we also find that if the minimal (say) solution is tangential, the minimal $n$-player equilibria can converge to a mixture of mean field equilibria and then the minimal mean field equilibrium may fail to be a proper limit. (The minimal and maximal solutions can be increasing-transversal or tangential, but never decreasing-transversal.) This also yields a novel example of how randomization can emerge in mean field games.

Second, we study the convergence to a general mean field equilibrium, possibly somewhere in the middle of Figure~\ref{fig:intro}. In that case, there are no obvious candidates for the $n$-player approximations and more abstract arguments need to be used. We show by a fixed point construction that all increasing-transversal solutions are limits of $n$-player equilibria. Quite surprisingly however, (``strongly'') decreasing-transversal solutions fail to be limits despite appearing stable in Figure~\ref{fig:intro}. In fact, these solutions merely occur as parts of mixtures that are limits, and the weight within these mixtures can be bounded by a monotone function of the slope in Figure~\ref{fig:intro}. It turns out that some fairly detailed asymptotic statistics, such as the expected number of $n$-player equilibria, can be analyzed in our model---which is unusual for mean field games.

The remainder of this paper is organized as follows. In Section~\ref{se:basicSetup}, we introduce the game of optimal stopping. Section~\ref{se:nPlayer} describes the Nash equilibria of the $n$-player version and Section~\ref{se:MFG} covers the analogue for the mean field game. The results on the convergence to the minimal and maximal equilibria are relatively direct and established in Section~\ref{se:convergenceExtremal}, whereas the more abstract results on the convergence to general equilibria are reported in Section~\ref{se:convergenceGeneral}.


\section{Description of the Game}\label{se:basicSetup}

Let $(I,\cI,\lambda)$ be a probability space representing the agents; we shall be interested in the $n$-player case with a finite $I$ and the mean field case with an atomless space. Let $(\Omega,\cG,P)$ be another probability space, equipped with a right-continuous filtration $\G=(\cG_{t})_{t\in\R_{+}}$ and an exponentially distributed random variable $\cE$ which is independent of $\G$.

Given an agent $i\in I$, let $\alpha^{i}\geq0$ be a $\G$-progressively measurable process which is locally integrable  
and consider the random time
$$
  \theta^{i} = \inf\bigg\{t:\, \int_{0}^{t} \alpha^{i}_{s}\,ds = \cE\bigg\}.
$$
As in~\cite{Nutz.16}, one may think of $\theta^{i}$ as the time when agent $i$ expects the default of her bank.
We fix a parameter $r\in\R$, interpreted as the interest rate paid by the bank (and assumed to be constant for simplicity). Following~\cite{Nutz.16}, we suppose that $\alpha^{i}$ is increasing\footnote{Increase is to be understood in the non-strict sense throughout the paper.} and that 
\begin{equation}\label{eq:intCond}
    \text{ $\inf\{t: \, \alpha^{i}_{t}-r\geq 0\}<\infty$\quad $P$-a.s.} \;
\end{equation}
Denoting by $\cT$ the set of all $\G$-stopping times, we then consider the 
optimal stopping problem
  \begin{equation}\label{eq:optStopProblem}
    \sup_{\tau\in\cT} E\big[e^{r\tau} \1_{\{\theta^{i}>\tau\}\cup \{\theta^{i}=\infty\}}\big]
  \end{equation}
which we assume to have a finite value. Thus, if the default $\theta^{i}>\tau$, we may think of the agent as accruing the interest on an initial unit investment until~$\tau$, but losing everything if $\theta^{i}<\tau$.
If the stopping time 
\begin{equation}\label{eq:optStopTime}   
   \tau^{i} := \inf\{t: \, \alpha^{i}_{t}\geq r\} \in\cT
\end{equation}
is a.s.\ finite, then $\tau^{i}$ is optimal and in fact the minimal solution of~\eqref{eq:optStopProblem}; cf. \cite[Lemma~2.1]{Nutz.16}.
The solution is unique for instance if $\alpha^{i}$ is strictly increasing, but not in general. We assume that agents choose~\eqref{eq:optStopTime} in the case of non-uniqueness, which can be motivated e.g.\ as a preference for early stopping when other things are equal. This convention is not essential, but simplifies our exposition and allows us to focus on multiplicity of equilibria due to inherent game-theoretic aspects as it avoids ambiguity at the individual agents' level.

The processes $\alpha^{i}$ will depend on the proportion $\rho(t)$ of players who have already stopped, thus inducing an interaction among the agents. Since given~$\rho$, the optimal stopping times are completely determined by~\eqref{eq:optStopTime}, we shall simply say that an equilibrium is a process $\rho$ which is $\G$-adapted and such that
\begin{equation*}\label{eq:equilibDef}
  \rho(t) = \lambda\{i:\, \tau^{i}\leq t\},
\end{equation*}
where it is tacitly assumed that the above set is $\lambda$-measurable.

\section{The $n$-Player Game}
\label{se:nPlayer}

In this section, we formulate the $n$-player version of the ``toy model'' mean field game in \cite[Section~4]{Nutz.16}. Indeed, fix $n\in\N$ and take $I=\{1,\dots,n\}$ to be a set with $n$ elements, equipped with the normalized counting measure.  Each player $i$ observes an idiosyncratic signal  $Y^i_t\geq 0$ which is right-continuous, progressively measurable, increasing and such that $\{Y^{i}\}_{i\in I}$ are pairwise i.i.d.\ with the common c.d.f.\ 
$$
y\mapsto F_t(y):=P\{Y_t^i\leq y\}.
$$
Moreover, for a fixed interaction constant\footnote{We could more generally consider processes $\alpha^{i}$ which are nonlinear functions of $Y^{i}$ and $\rho^{-i}$ and possibly a common noise, as in~\cite{Nutz.16}. However, the increased generality does not seem to lead to additional insights regarding the main questions of this paper, so we have chosen to use the simplified ``toy model'' in our exposition. The constant $c$ could in fact be normalized to $1$ by changing $Y^{i}$ and $r$, but we find it useful to represent the strength of interaction explicitly.} $c>0$,
$$
  \alpha^i(t)=Y_t^i+c\rho_{n}^{-i}(t), \quad \mbox{where}\quad \rho_n^{-i}(t)=\frac{\#\{j\neq i:\, \tau^j\leq t\}}{n}
$$
is the fraction of other players\footnote{Once again, we have decided to exclude player~$i$ in order to focus on the game-theoretic aspect of multiplicity. If player~$i$ considers her own action; i.e., uses $\rho$ instead of $\rho^{-i}$, non-uniqueness can occur without other agents' involvement simply because of the direct feedback on the state process.} (from the perspective of $i$) that have already stopped, according to $(\tau^{j})_{j\neq i}$. 
Specializing from the previous section, an  $n$-player  equilibrium boils down to the process $\rho_{n}(t)=\#\{j:\,\tau^j\leq t\}/n$ where $\tau^{j}$ are as in~\eqref{eq:optStopTime}. In particular, if $\rho_{n}$ is an equilibrium and $(t,\omega)$ is such that $\rho_n(t)(\omega)=k/n$, then as the stopping times satisfy $\tau^i=\inf\{t:\alpha^i(t)\geq r\}$, we must have\footnote{We will often abbreviate $\#\{i\in I:\, \dots\}$ to $\#\{\dots\}$ in what follows.}
\begin{equation}\label{eq:NplayerEquilibCond}
\#\{Y_t^i(\omega)+c\smsp \frac{k-1}{n}\geq r\}=k \quad\mbox{and}\quad
\#\{Y_t^i(\omega)+c\smsp \frac{k}{n}< r\}=n-k.
\end{equation}
This condition is also sufficient, in the sense made precise in Remark~\ref{rk:NplayerEquilibCondSuff}.

Next, we sketch the structure of all equilibria $\rho_n(t)=\#\{i:  \tau^i\leq t\}/n$ of this game by a recursive construction, starting with $K=\emptyset$.
\begin{enumerate}
\item[1.] Suppose that at a given stopping time $t_{0}$, a group $K\subsetneq I$ of agents has already stopped. Then every remaining agent $i\notin K$ examines her criterion 
$$
  \theta_{K}^{i} = \inf\{t:\, Y^{i}_{t} + c\smsp \frac{\#K}{n}\geq r\}.
$$
If $\theta_{K}^{i}\leq t_{0}$, then player $i$ must stop immediately. We add $i$ to the set $K$ and repeat Step 1 until no further players are forced to stop. (By the monotonicity in $\#K$, it does not matter in which order the agents are processed.)

\item[2.] Beyond individual players forced to stop, a group $J\subseteq K^{c}$ of agents may be able to ``coordinate'' and stop together.\footnote{While we are using suggestive language here, it should be noted that these are simply different configurations which may be equilibria. We are not trying to model a mechanism how players ``find'' an equilibrium.} Indeed, suppose that
$$
  \theta_{K}^{J} = \inf\{t:\, Y^{i}_{t} + c\smsp \frac{\#K+\#J-1}{n}\geq r\}
$$
satisfies $\theta_{K}^{J}\leq t_{0}$ for all $i\in J$. Then it is optimal for all these agents to stop as a group, and they may or may not ``choose'' to do so. If they stop, we add $J$ to $K$ and repeat the procedure starting with Step 1.

\item[3.] After all remaining groups of agents have decided whether to stop at time $t_{0}$, we increment time until there exists a group or individual agent wanting to stop, and start again at Step 1.
\end{enumerate}

The multiplicity of equilibria of this game arises because of the choices taken by the groups $J$ in Step 2, as well as the order in which the groups are processed. Next, we describe two of these equilibria in detail.
The first one is the minimal equilibrium and corresponds to groups~$J$ in Step 2 always choosing not to stop. This is equivalent to all players remaining in the game until their own optimality criterion forces them to quit.

\begin{proposition}
\label{pr:CharacterizationOfNMinEquilibrium}
There exists an $n$-player equilibrium $\rho^m_n$ such that
\begin{equation}\label{eq:CharNMinEquilib}
\rho^m_n(t)=\frac{k}{n}\Longleftrightarrow\left\{
\begin{aligned}
&\#\{Y_t^i+c\smsp \frac{k}{n}\geq r\}=k \\
&\#\{Y_t^i+c\smsp \frac{k-l}{n}\geq r\}\geq k-l+1,\quad 1\leq l\leq k.
\end{aligned}
\right.
\end{equation}
This equilibrium is minimal; i.e., $\rho^m_n(t)\leq\rho_n(t)$ for any $n$-player equilibrium~$\rho_n$.
\end{proposition}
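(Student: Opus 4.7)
The plan is to construct $\rho^m_n$ pathwise as the smallest stable configuration of a natural counting function, then verify in turn the characterization~\eqref{eq:CharNMinEquilib}, the equilibrium property, and minimality. Introduce the $\G_t$-measurable, integer-valued function
\[
f_t(k) := \#\{i\in I:\, Y^i_t + ck/n \geq r\},\qquad k\in\{0,1,\ldots,n\},
\]
which is non-decreasing in both $k$ and $t$. Set $k^*(t,\omega):=\min\{k\geq 0:\,f_t(k)\leq k\}$; this minimum is attained since $f_t(n)\leq n$, and by integrality combined with monotonicity of $f_t$ one checks $f_t(k^*)=k^*$ (either $k^*=0$ with $f_t(0)=0$, or $f_t(k^*-1)\geq k^*$ forces $f_t(k^*)\geq k^*$, matching $f_t(k^*)\leq k^*$). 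Define $\rho^m_n(t):=k^*(t,\omega)/n$.

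The characterization~\eqref{eq:CharNMinEquilib} is then immediate: $f_t(k^*)=k^*$ is the first bullet, and $f_t(k')\geq k'+1$ for all $0\leq k'<k^*$ (from the minimality of $k^*$ and integrality of $f_t$) is the second bullet; conversely, any $k$ satisfying both bullets must be the smallest fixed point of $f_t$. $\G$-adaptedness follows from measurability of $f_t$. Monotonicity in $t$ follows because $k<k^*(t)$ entails $f_t(k)>k$, hence $f_{t'}(k)\geq f_t(k)>k$ for $t'\geq t$ (using that each $Y^i$ is increasing), so $k^*(t')>k$. Right-continuity holds because $Y^i$ is right-continuous and $f_t$ is integer-valued: the strict inequalities $Y^i_t+ck/n<r$ (resp.\ $\geq r$) persist under small right shifts, so $f_{t+}\equiv f_t$ and hence $k^*(t+)=k^*(t)$.

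For the equilibrium property, let $S(t):=\{i:\,Y^i_t+ck^*(t)/n\geq r\}$ and $\tau^i:=\inf\{t:\,i\in S(t)\}$, a $\G$-stopping time by the measurability of $S(t)$. The key algebraic identity, used repeatedly, is that with $k=k^*(t)$ the second bullet of~\eqref{eq:CharNMinEquilib} with $l=1$ gives $f_t(k-1)\geq k$, while the inclusion $\{Y^i_t+c(k-1)/n\geq r\}\subseteq\{Y^i_t+ck/n\geq r\}$ combined with $f_t(k)=k$ forces the two sets to coincide. Hence $|S(t)|=k^*(t)$ and also $S(t)=\{i:\,Y^i_t+c(k-1)/n\geq r\}$. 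Since $Y^i$ and $k^*$ are both non-decreasing in~$t$, one has $S(s)\subseteq S(t)$ for $s\leq t$, so $\{\tau^i\leq t\}=\{i\in S(t)\}$ and therefore $\#\{i:\,\tau^i\leq t\}/n=\rho^m_n(t)$. To match the equilibrium condition $\tau^i=\inf\{s:\,Y^i_s+c\rho^{-i}_n(s)\geq r\}$, note that for $s<\tau^i$, $i\notin S(s)$ yields $Y^i_s+c\rho^m_n(s)<r$ and $\rho^{-i}_n(s)=\rho^m_n(s)$, while at $s=\tau^i$ the alternative description of $S(\tau^i)$ gives $Y^i_{\tau^i}+c(k^*(\tau^i)-1)/n\geq r$, i.e.\ $Y^i_{\tau^i}+c\rho^{-i}_n(\tau^i)\geq r$ since $\rho^{-i}_n(\tau^i)=\rho^m_n(\tau^i)-1/n$.

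Minimality is then a one-liner: any $n$-player equilibrium $\rho_n$ satisfies, by the second identity in~\eqref{eq:NplayerEquilibCond}, $f_t(n\rho_n(t))=n\rho_n(t)$, so $n\rho_n(t)$ is a fixed point of $f_t$ and thus $k^*(t)\leq n\rho_n(t)$. The main obstacle I anticipate is the equilibrium verification, specifically the handling of jump times: a single player's jump in $Y^i$ can trigger a cascade in which several players stop simultaneously (the iteration $k\mapsto f_t(k)$ climbs from the old $k^*$ to a strictly larger fixed point in finitely many steps), and one needs the minimality of $k^*$ to ensure that the identity $\{Y^i_t+ck/n\geq r\}=\{Y^i_t+c(k-1)/n\geq r\}$ remains coherent through the jump---which is precisely what \eqref{eq:CharNMinEquilib} encodes.
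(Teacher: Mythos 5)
Your proof is correct, but it takes a genuinely different route from the paper's. The paper constructs $\rho^m_n$ \emph{dynamically}: it iterates the cascade described in Steps 1--3, adding one agent at a time via the order statistics of the stopping times $\theta_K^i=\inf\{t:\,Y^i_t+c\,\#K/n\geq r\}$, and then asserts ``by construction'' that the resulting $\rho^m_n$ is an equilibrium satisfying~\eqref{eq:CharNMinEquilib}; minimality is obtained by contradiction, playing the second line of~\eqref{eq:CharNMinEquilib} against~\eqref{eq:NplayerEquilibCond}. You instead define $\rho^m_n(t)$ \emph{statically}, pathwise at each $t$, as the least fixed point $k^*(t)$ of the monotone counting function $f_t(k)=\#\{Y^i_t+ck/n\geq r\}$, which makes the characterization~\eqref{eq:CharNMinEquilib} and the minimality (any equilibrium value is a fixed point of $f_t$ by~\eqref{eq:NplayerEquilibCond}, hence dominates the least one) essentially tautological; the price is that you must verify explicitly what the paper's construction delivers for free, namely monotonicity and right-continuity of $t\mapsto k^*(t)$ and, above all, that the induced $\tau^i=\inf\{t:\,i\in S(t)\}$ satisfy the fixed-point condition $\tau^i=\inf\{s:\,Y^i_s+c\rho^{-i}_n(s)\geq r\}$ --- which you do correctly, the key point being that $f_t(k^*)=k^*$ together with $f_t(k^*-1)\geq k^*$ forces $\{Y^i_t+ck^*/n\geq r\}=\{Y^i_t+c(k^*-1)/n\geq r\}$, so that removing agent $i$'s own contribution costs exactly the $1/n$ you need. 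Your least-fixed-point argument for a monotone integer map is in fact the same mechanism the paper isolates later as Lemma~\ref{le:fixedpoint} for constructing non-extremal equilibria, so your approach unifies the two; the paper's iterative construction, on the other hand, exhibits the individual stopping times explicitly and stays closer to the informal game dynamics. Your closing worry about cascades at jump times is moot in your formulation precisely because the construction is pointwise in $t$.
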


\begin{proof}
  The construction is iterative. Given a set $K\subsetneq I$ corresponding to players who have already stopped, we can consider for all $i\notin K$ the stopping times 
  $$
    \theta_{K}^{i} = \inf\{t:\, Y^{i}_{t} + c\smsp \frac{\#K}{n}\geq r\}
  $$
  with the corresponding order statistics $\theta_{K}^{(1)}\leq\theta_{K}^{(2)} \leq \dots$. We define $\theta_{K}=\theta_{K}^{(1)}$ and $i_{K}=(1)$. 
  We note that agent $i$ must stop at $\theta_{K}^{i}$, even if no further agents $j\notin K$ choose to stop, and that $i_{K}$ is the first of the agents $i\notin K$ subject to this event.
  
  To define the equilibrium, start with $K_{0}=\emptyset$ and set $\tau^{i}=\theta_{K_{0}}\equiv \theta_{K_{0}}^{(1)}$ on $\{i=i_{K_{0}}\}$. Next, set $K_{1}=\{i_{K_{0}}\}$  and $\tau^{i}=\max\{\theta_{K_{1}},\theta_{K_{0}}\}$ on $\{i=i_{K_{1}}\}$, and continue inductively setting $K_{k}=K_{k-1}\cup \{i_{K_{k-1}}\}$ and $\tau^{i}=\max\{\theta_{K_{k}},\tau^{i_{K_{k-1}}}\}$ on $\{i=i_{K_{k}}\}$ for $k=2,\dots,n-1$. (The maximum needs to be taken since all the $\alpha^{j}$ are increased after player $i_{K_{k-1}}$ stops.)
  
  Setting $\rho^m_n(t)=\#\{i: \tau^i\leq t\}/n$, we have by construction that $\rho^m_n$ is an equilibrium with corresponding optimal stopping times $(\tau^{i})$ and that~\eqref{eq:CharNMinEquilib} holds.
  
To see the minimality, let $\rho_{n}$ be any $n$-player equilibrium and consider $(t,\omega)$ such that $\rho_n(t)(\omega)=k/n$. 
Let $k'$ be such that $\rho^m_n(t)(\omega)=k'/n$. If we had $k' > k$, then~\eqref{eq:CharNMinEquilib} would imply $\#\{Y_t^i(\omega)+c\smsp \frac{k}{n}\geq r\}\geq k+1$ and hence $\#\{Y_t^i(\omega)+c\smsp \frac{k}{n}<r\}\leq n-k-1$, a contradiction to~\eqref{eq:NplayerEquilibCond}. Thus, $k'\leq k$ and we have shown that $\rho^m_n\leq \rho_n$.
\end{proof}

\begin{remark}\label{rk:minimalFromGivenEquilibrium}
  Let $\rho$ be an $n$-player equilibrium and $t_{0}$ a stopping time. There exists an equilibrium which is minimal among all $n$-player equilibria~$\varrho$ such that $\varrho=\rho$ on $[0,t_{0}]$. Indeed, it is obtained by agents stopping as in $\rho$ until $t_{0}$, whereas from $t_{0}$ onwards we apply the construction in the proof of Proposition~\ref{pr:CharacterizationOfNMinEquilibrium} starting with $K=\{i:\,\tau^{i}\leq t_{0}\}$. We call this $\varrho$ the \emph{minimal extension} of $\rho$ after~$t_{0}$.
\end{remark}

The second extremal equilibrium is maximal and corresponds to players coordinating their actions such as to stop as early as possible. As seen in the construction below, this is equivalent to all players constantly seeking (maximally large) groups of collaborators so that immediate simultaneous stopping is optimal for all agents in the group.

\begin{proposition}
\label{pr:CharacterizationOfNMaxEquilibrium}
There exists an $n$-player equilibrium $\rho^M_n$ such that
\begin{equation}\label{eq:CharNMaxEquilib}
\rho^M_n(t)=\frac{k}{n}\Longleftrightarrow\left\{
\begin{aligned}
&\#\{Y_t^i+c\smsp \frac{k-1}{n}\geq r\}=k \\
&\#\{Y_t^i+c\smsp \frac{k+l-1}{n}\geq r\}\leq k+l-1,\quad 1\leq l\leq n-k.
\end{aligned}
\right.
\end{equation}
This equilibrium is maximal; i.e., $\rho^M_n(t)\geq\rho_n(t)$ for any $n$-player equilibrium~$\rho_n$.
\end{proposition}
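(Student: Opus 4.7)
The plan is to mirror the iterative construction used for the minimal equilibrium in Proposition~\ref{pr:CharacterizationOfNMinEquilibrium}, but with the opposite philosophy: at every instant, the largest coalition that can stop collectively does so immediately. Call a group $J\subseteq I\setminus K$ \emph{stoppable given} $K$ at $(t,\omega)$ if $Y^i_t(\omega)+c\smsp(\#K+\#J-1)/n\geq r$ for every $i\in J$. The key algebraic fact is that this family is closed under unions: if $J_1,J_2$ are stoppable and $i\in J_1\cup J_2$ lies in, say, $J_1$, then $Y^i_t+c\smsp(\#K+\#J_1-1)/n\geq r$, and replacing $\#J_1$ by $\#(J_1\cup J_2)\geq\#J_1$ only strengthens the inequality. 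Hence a unique maximal stoppable group exists at each $(t,\omega,K)$.

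With this in hand, I would construct $\rho^M_n$ recursively. Set $K_0=\emptyset$, $\sigma_0=0$, and given $K_{j-1}$, $\sigma_{j-1}$, define
$$
\sigma_j = \inf\bigl\{t\geq \sigma_{j-1}:\,\text{a nonempty stoppable group given } K_{j-1} \text{ exists at }t\bigr\}
$$
and let $J_j$ be the maximal such group at $\sigma_j$; assign $\tau^i=\sigma_j$ for $i\in J_j$ and update $K_j=K_{j-1}\cup J_j$. After at most $n$ iterations all $\tau^i$ are defined, and we set $\rho^M_n(t)=\#\{i:\tau^i\leq t\}/n$. That $\sigma_j$ is a $\G$-stopping time and $J_j$ is measurable follows from right-continuity and monotonicity of the $Y^i$ and the fact that there are only finitely many subsets of $I$ to inspect; this parallels the argument in Proposition~\ref{pr:CharacterizationOfNMinEquilibrium}.

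Next, I would verify~\eqref{eq:CharNMaxEquilib} when $\rho^M_n(t)=k/n$. The bound $\#\{Y^i_t+c\smsp(k-1)/n\geq r\}\geq k$ is immediate, since each stopped agent $i\in J_m$ satisfies $Y^i_{\sigma_m}+c\smsp(\#K_m-1)/n\geq r$ with $\#K_m\leq k$, and $Y^i$ is increasing in time. For the upper bounds, the central observation is: if $\#\{Y^i_t+c\smsp(k+l-1)/n\geq r\}\geq k+l$ for some $l\geq 0$, letting $L$ be this set gives $Y^i_t+c\smsp(|L|-1)/n\geq r$ for every $i\in L$, so $L$ is a stoppable-from-scratch coalition of size at least $k+l$ at time $t$. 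A short case distinction on whether the algorithm's running set $K_{m-1}$ at the first time $L$ becomes stoppable is contained in $L$ (in which case $J_m\supseteq L\setminus K_{m-1}$ by maximality, forcing $\rho^M_n(t)\geq(k+l)/n$) or not (in which case some $i_0\in K_{m-1}\setminus L$ had to be stopped at an earlier $\sigma_l$ with $\#K_l>k+l$, again forcing $\rho^M_n(t)>k/n$) produces a contradiction whenever $l\geq 1$. The case $l=0$ yields equality in the first line of~\eqref{eq:CharNMaxEquilib}; taking $l=1$ in the second line then recovers~\eqref{eq:NplayerEquilibCond}, so $\rho^M_n$ is indeed an equilibrium.

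Finally, maximality is a short comparison: if $\rho_n$ is any equilibrium with $\rho_n(t)=k/n$ and $\rho^M_n(t)=k'/n$ with $k'<k$, apply~\eqref{eq:CharNMaxEquilib} with $l=k-k'\geq 1$ to obtain $\#\{Y^i_t+c\smsp(k-1)/n\geq r\}=\#\{Y^i_t+c\smsp(k'+l-1)/n\geq r\}\leq k'+l-1=k-1$, contradicting the first equation of~\eqref{eq:NplayerEquilibCond} for $\rho_n$, which says this count equals $k$. I expect the main obstacle to be the bookkeeping in the case analysis verifying~\eqref{eq:CharNMaxEquilib}: one must carefully rule out the scenario in which the algorithm's running state $K_{m-1}$ fails to be contained in the hypothetical large coalition $L$, using that any stray agent in $K_{m-1}\setminus L$ betrays the presence of an even earlier round with too many participants. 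The construction itself and the maximality proof are straightforward duals of the minimal case.
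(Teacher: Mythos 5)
Your construction is essentially the paper's: there, too, $\rho^M_n$ is built by letting, at each round, the largest coalition that can collectively stop do so at the earliest possible time (the paper selects the group size via order statistics of the times $\theta_K^l$ with ties broken toward the larger size, which by your union-closure observation amounts to choosing the same maximal stoppable group), and the maximality comparison at the end is identical to yours. The observation that stoppable groups are closed under unions is correct and is a clean way to see that the maximal coalition is well defined.

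The one genuine problem is in your verification of the second line of \eqref{eq:CharNMaxEquilib}. The second branch of your case distinction --- ``some $i_0\in K_{m-1}\setminus L$ had to be stopped at an earlier round with $\#K_l>k+l$'' --- is a non sequitur: an agent outside $L$ may stop early simply because its own signal is large, and this carries no information about the size of the running stopped set. The first branch is also delicate, because the round you fix occurs at a time $\sigma_m$ that may strictly precede the first time $L$ becomes stoppable, at which point $L\setminus K_{m-1}$ need not yet be stoppable and maximality of $J_m$ gives nothing. Both issues disappear if you argue directly at time $t$ with the terminal stopped set: let $K$ be the set of agents your algorithm has stopped by time $t$, so $\#K=k$, and suppose $L:=\{i:\,Y^i_t+c\smsp\frac{k+l-1}{n}\geq r\}$ had $\#L\geq k+l$ for some $l\geq1$. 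Then $L\setminus K$ is nonempty and, since $\#K+\#(L\setminus K)=\#(K\cup L)\geq\#L\geq k+l$, every $i\in L\setminus K$ satisfies $Y^i_t+c\smsp\frac{\#K+\#(L\setminus K)-1}{n}\geq Y^i_t+c\smsp\frac{k+l-1}{n}\geq r$; that is, $L\setminus K$ is a nonempty stoppable group given $K$ at time $t$, contradicting the fact that your algorithm has no further round by time $t$. With this repair, the rest of your argument (the lower bound $\geq k$ in the first line, the recovery of \eqref{eq:NplayerEquilibCond}, and the maximality comparison) goes through as written.
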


\begin{proof}
  Given a set $K\subsetneq I$ of size $k=\#K$ corresponding to players who have already stopped, we can consider for $1\leq l \leq n-k$ the stopping times
  $$
    \theta_{K}^{l} = \inf\{t:\, \#\{i\notin K:\, Y_t^i+c\smsp \frac{k+l-1}{n}\geq r\}\geq l\};
  $$
  intuitively, this is the first time an additional group $J$ of $\#J=l$ agents can collectively stop.
  If $\theta_{K}^{(1)}\leq\dots\leq \theta_{K}^{(n-k)}$ are the corresponding order statistics (ties are split by assigning the lower rank to the larger index $l$), pick $l=(1)$ and let $J=J(K)$ be the set of $i\notin K$ such that $\{Y^{i}_{\theta_{K}^{l}}\}_{i\in J}$ are the $l$ largest elements in $\{Y^{i}_{\theta_{K}^{l}}\}_{i\in K^{c}}$; we think of $J$ as the $l$ most pessimistic agents remaining at time $\theta_{K}^{l}$ and denote $\theta_{K}:=\theta_{K}^{l}$.
    
  To define the equilibrium, start with $K_{0}=\emptyset$ and set $\tau^{i}=\theta_{\emptyset}$ for $i\in J(\emptyset)$. Next, set $K_{1}=J(\emptyset)$  and $\tau^{i}=\theta_{K_{1}}$ for $i\in J(K_{1})$, and continue inductively with $K_{2}=J(K_{1})\cup K_{1}$.
  Setting $\rho^M_n(t)=\#\{i: \tau^i\leq t\}/n$, we have by construction that $\rho^M_n$ is an equilibrium with corresponding optimal stopping times $(\tau^{i})$ and that~\eqref{eq:CharNMaxEquilib} holds.
  
To see the maximality, let $\rho_{n}$ be any $n$-player equilibrium and consider $(t,\omega)$ such that $\rho_n(t)(\omega)=k/n$. Again, $\rho_{n}$ must satisfy~\eqref{eq:NplayerEquilibCond}.
Let $k'$ be such that $\rho^M_n(t)(\omega)=k'/n$. If we had $k' < k$, then~\eqref{eq:CharNMaxEquilib} would imply that $\#\{Y_t^i(\omega)+c\smsp \frac{k-1}{n}\geq r\}\leq k-1$, contradicting~\eqref{eq:NplayerEquilibCond}.
\end{proof}

The following observations will be used in Section~\ref{se:convergenceGeneral} when we construct $n$-player equilibria converging to a given mean field equilibrium.

\begin{remark}\label{rk:cutAndPaste}
  (i) Consider $n$-player equilibria $\rho$ and $\rho'$, a stopping time $t_{0}$ and assume that $\rho(t_{0})\leq \rho'(t_{0})$. Then there exists an $n$-player equilibrium $\varrho$ such that 
  $$
    \varrho\1_{[0,t_{0})}=\rho \1_{[0,t_{0})} \quad \mbox{and}\quad \varrho\1_{[t_{0},\infty)}=\rho'\1_{[t_{0},\infty)}.
  $$
  Indeed, let $I_{0}$ be the set of agents that have stopped by time $t_{0}$ in equilibrium~$\rho$ and let $I_{1}$ be the analogue for $\rho'$. By~\eqref{eq:optStopTime} we necessarily have $I_{0}\subseteq I_{1}$. The equilibrium $\varrho$ is obtained by following the stopping times of $\rho$ on $[0,t_{0})$. At $t_{0}$, all agents in the group $J=I_{1}\setminus I_{0}$ stop (and this must be optimal as $\rho'$ is an equilibrium). After that the remaining agents act as in~$\rho'$.
  
  (ii) Extending the above, consider $n$-player equilibria $\rho$ and $\rho'$, stopping times $t_{0} \leq t_{1}$ and assume that $\rho(t_{0})\leq \rho'(t_{1})$. Then there exists an $n$-player equilibrium $\varrho$ such that 
  \begin{equation}\label{eq:cutPastProperty}
    \varrho\1_{[0,t_{0})}=\rho \1_{[0,t_{0})} \quad \mbox{and}\quad \varrho\1_{[t_{1},\infty)}=\rho'\1_{[t_{1},\infty)}.
  \end{equation}
  Indeed, let $\rho_{1}$ be the minimal extension of $\rho$ after $t_{0}$ (cf.\ Remark~\ref{rk:minimalFromGivenEquilibrium}). Let $I_{0}$ be the set of agents that have stopped by time $t_{0}$ in equilibrium $\rho$ and let $I_{1}$ be the set of agents that have stopped by time $t_{1}$ in equilibrium $\rho'$. Again, we observe that $I_{0}\subseteq I_{1}$, due to~\eqref{eq:optStopTime} and the increase of $Y^{i}$. Moreover, $I_{1}$ must include all agents that stop in the construction of the minimal extension on $[t_{0},t_{1}]$. As a result, $\rho_{1}(t_{1})\leq \rho'(t_{1})$, and now the claim follows by applying~(i).
  
  (iii) A last generalization is that when $\rho(t_{0})\leq \rho'(t_{1})$ merely holds on some set $A\in\cG_{t_{1}}$, then we can still construct an $n$-player equilibrium $\varrho$ satisfying~\eqref{eq:cutPastProperty} on~$A$. Indeed, $\varrho$ is found as in (ii) except that on $A^{c}$, agents continue to stop according to $\rho_{1}$ after $t_{1}$.
\end{remark}

\begin{remark}\label{rk:NplayerEquilibCondSuff}
  (i) The necessary condition~\eqref{eq:NplayerEquilibCond} is sufficient in the following sense. Fix $n$ and a stopping time $t_{0}$, and suppose there exists an $\cG_{t_{0}}$-measurable random variable $k$ satisfying~\eqref{eq:NplayerEquilibCond} at $t_{0}$; i.e.,
  \begin{equation*}
    \#\{Y_{t_{0}}^i+c\smsp \frac{k-1}{n}\geq r\}=k \quad\mbox{and}\quad
    \#\{Y_{t_{0}}^i+c\smsp \frac{k}{n}< r\}=n-k.
  \end{equation*}  
  Then there exists an $n$-player equilibrium $\varrho$ such that $\varrho(t_{0})=k/n$.
  
  To construct $\varrho$, let agents stop as in the minimal equilibrium $\rho^{m}_{n}$ up to time $t_{0}$. By the argument at the end of the proof of Proposition~\ref
{pr:CharacterizationOfNMinEquilibrium}, we must have $\rho^{m}_{n}(t_{0})\leq k/n$. At $t_{0}$, all remaining agents $i$ with $Y^{i}_{t_{0}} + c\smsp \frac{k-1}{n}\geq r$ stop, so that $\rho(t_{0})=k/n$. After that, the remaining agents follow the construction in the proof of Proposition~\ref{pr:CharacterizationOfNMinEquilibrium} starting with $K=\{i:\,\tau^{i}\leq t_{0}\}$.

 (ii) A variant of this holds when~\eqref{eq:NplayerEquilibCond} is satisfied on some set $A\in \cG_{t_{0}}$, with the conclusion that $\varrho(t_{0})=k/n$ holds only on $A$. Indeed, we construct $\varrho$ as above on $A$, whereas on $A^{c}$ we use $\rho^{m}_{n}$.

  (iii) For later use, we observe that if this construction is applied for two times $t_{0}\leq t_{1}$ and corresponding random variables $k_{0}\leq k_{1}$, the resulting equilibria satisfy $\varrho_{0}\leq\varrho_{1}$.
\end{remark}

\section{The Mean Field Game}
\label{se:MFG}

The game considered in this section is the ``toy model'' mean field game of \cite[Section~4]{Nutz.16}. Indeed, $(I, \cI, \lambda)$ is an atomless probability space and we
work on a so-called Fubini extension $(I\times\Omega,\Sigma,\mu)$ of the product $(I\times\Omega,\cI\times\cG,\lambda\times P)$; see \cite[Section~3]{Nutz.16}. For each $i\in I$, let $Y^i_t\geq 0$ be a right-continuous, increasing, $\G$-progressively measurable process such that for each $t\geq 0$, $(i,\omega)\mapsto Y_t^i(\omega)$ is $\Sigma$-measurable and $Y_t^i,\ i\in I$ are $\lambda$-essentially pairwise i.i.d.; see also \cite[Definition 3.1]{Nutz.16}. Working on a Fubini extension ensures that such processes exist, as well as the validity of an Exact Law of Large Numbers. In all that follows, we assume that the c.d.f.\ $y\mapsto F_t(y)=P\{Y_t^i\leq y\}$ is continuous.

Since $\lambda$ is atomless, each individual agent has zero mass and hence does not influence the state process $\rho(t)=\lambda\{i: \tau^i\leq t\}$. In particular, we do not distinguish $\rho$ and $\rho^{-i}$ and simply set $\alpha^i(t)=Y_t^i+c\rho(t)$. We recall that $\rho$ is an equilibrium if $\rho(t) = \lambda\{i:\, \tau^{i}\leq t\}$ where $\tau^{i}$ is as in~\eqref{eq:optStopTime} for $\lambda$-a.e.\ $i\in I$. Such a process may be random (see also~\cite{Nutz.16}). However, as common in the mean field game literature, we pay special attention to equilibria which are deterministic due to the infinite number of players.\footnote{Note that the key message of this paper, namely that some mean field equilibria are not limits of $n$-player equilibria, is only amplified if more mean field equilibria are considered.} The following is an improved version of \cite[Proposition 4.1]{Nutz.16} with necessary and sufficient conditions.

\begin{proposition}\label{pr:MFGequilib}
  A real function $\rho: \R_{+}\to[0,1]$ is a mean field game equilibrium if and only if it is increasing, right-continuous and 
\begin{equation}\label{eq:master}
  \rho(t)+F_t(r-c\rho(t))=1, \quad t\geq0.
\end{equation}
\end{proposition}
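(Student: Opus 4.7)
The plan is to reduce the equilibrium condition $\rho(t) = \lambda\{i : \tau^{i} \leq t\}$ to a pointwise deterministic equation by combining a monotonicity argument with the Exact Law of Large Numbers (ELLN) available on the Fubini extension $(I \times \Omega, \Sigma, \mu)$.

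The heart of the argument is the following set identity, valid whenever $\rho$ is increasing and right-continuous:
$$\{\tau^{i} \leq t\} = \{Y^{i}_{t} \geq r - c\rho(t)\}.$$
The inclusion ``$\supseteq$'' is immediate from~\eqref{eq:optStopTime}. For ``$\subseteq$'', note that by right-continuity of $Y^{i}$ and $\rho$, the infimum defining $\tau^{i}$ is attained, so $Y^{i}_{\tau^{i}} + c\rho(\tau^{i}) \geq r$; the monotonicity of $Y^{i}$ and $\rho$ then yields $Y^{i}_{t} \geq Y^{i}_{\tau^{i}} \geq r - c\rho(\tau^{i}) \geq r - c\rho(t)$ whenever $\tau^{i} \leq t$.

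For necessity, suppose $\rho$ is an equilibrium. Then $\rho$ is increasing as the $\lambda$-measure of the increasing family $\{i : \tau^{i} \leq t\}$, and right-continuous because $\{\tau^{i} \leq t\} = \bigcap_{n} \{\tau^{i} \leq t + 1/n\}$, so dominated convergence on $(I, \cI, \lambda)$ yields $\rho(t_{n}) \downarrow \rho(t)$ along $t_{n} \downarrow t$. Applying the ELLN to the $\lambda$-essentially pairwise i.i.d.\ family $\{Y^{i}_{t}\}_{i \in I}$ at the deterministic threshold $r - c\rho(t)$---with continuity of $F_{t}$ ensuring $P\{Y^{i}_{t} \geq r - c\rho(t)\} = 1 - F_{t}(r - c\rho(t))$---and combining with the set identity yields~\eqref{eq:master}. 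The sufficiency direction is then immediate: given an increasing, right-continuous $\rho$ satisfying~\eqref{eq:master}, the set identity applies directly, and the same ELLN computation produces $\lambda\{i : \tau^{i} \leq t\} = 1 - F_{t}(r - c\rho(t)) = \rho(t)$, so $\rho$ is an equilibrium.

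The main subtlety I anticipate is the limit passage at $s = \tau^{i}$ in the set identity, which requires the right-continuity of both processes simultaneously; the ELLN application itself is standard in this setting because $\rho$ is deterministic by hypothesis (so the threshold $r - c\rho(t)$ is nonrandom for each fixed $t$), and the continuity of $F_{t}$ disposes of any concern about an atom of $Y^{i}_{t}$ at the threshold.
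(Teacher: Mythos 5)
Your proof is correct, and the sufficiency direction together with the ELLN bookkeeping matches the paper's argument; the genuine difference lies in how you obtain right-continuity in the necessity direction. You derive it \emph{first}, by a soft measure-theoretic argument: $\{i:\tau^{i}\leq t\}=\bigcap_{n}\{i:\tau^{i}\leq t+1/n\}$, so continuity from above of the probability measure $\lambda$ forces $\rho(t+1/n)\downarrow\rho(t)$, and only then do you invoke the set identity $\{\tau^{i}\leq t\}=\{Y^{i}_{t}+c\rho(t)\geq r\}$ (which, as you correctly note, needs right-continuity of $\rho$ for the ``$\subseteq$'' inclusion). The paper instead works with an arbitrary increasing $\rho$, computes $\{\tau^{i}\leq t\}=\{Y^{i}_{t}+c\rho(t+)\geq r\}$ directly, obtains $\rho(t)=1-F_{t}(r-c\rho(t+))$, and then deduces $\rho(t)=\rho(t+)$ from the joint right-continuity of $(t,u)\mapsto F_{t}(r-cu)$ recorded in~\eqref{eq:jointRC}. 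Your route is more elementary for this proposition --- it needs neither~\eqref{eq:jointRC} nor the continuity of $F_{t}$ to settle right-continuity --- but the paper's detour is not wasted: \eqref{eq:jointRC} and the intermediate identity involving $\rho(t+)$ are exactly what power Corollary~\ref{co:minMaxEquilib}(i), which upgrades an arbitrary increasing solution of~\eqref{eq:master} to the equilibrium $\rho(t+)$; your argument does not yield that statement without further work. Two minor points to tidy up: the attainment of the infimum in~\eqref{eq:optStopTime} should be accompanied by the trivial remark that the case $\tau^{i}=\infty$ makes the inclusion vacuous, and the a.s.\ identities $\rho(s)=\lambda\{i:\tau^{i}\leq s\}$ must be intersected over the countably many times $s\in\{t\}\cup\{t+1/n\}$ before passing to the limit.
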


\begin{proof}
  Suppose that $\rho$ is a mean field game equilibrium, then $\rho$ is clearly increasing. Since $Y_t^i,\ i\in I$ are $\lambda$-essentially pairwise i.i.d., the Exact Law of Large Numbers (e.g., \cite[Section~3]{Nutz.16}) states that 
  $\lambda\{i:Y^i_t\leq u\}=F_{t}(u)$ for all $u$. Using also~\eqref{eq:optStopTime} and that $y\mapsto F_{t}(y)$ is continuous, we have
\begin{equation}\label{eq:proofRhoRC}
\rho(t)=\lambda\{i:\tau^i\leq t\}=\lambda\{i:Y^i_t+c\rho(t+)\geq r\}=1-F_t(r-c\rho(t+)).
\end{equation}
Recall that $Y^{i}$ has right-continuous paths. Using again the continuity of $F_{t}$, this implies that
\begin{equation}\label{eq:jointRC}
  \mbox{$(t,u)\mapsto F_t(r-cu)$ is jointly right-continuous.}
\end{equation}
It follows that $t\mapsto 1-F_t(r-c\rho(t+))$ is right-continuous, and thus the left-hand side of~\eqref {eq:proofRhoRC} must also be right-continuous. That is, $\rho(t)=\rho(t+)$, and then~\eqref{eq:proofRhoRC} becomes~\eqref{eq:master}.

Conversely, suppose that $\rho$ is a function with the stated properties. Defining the corresponding optimal stopping times $\tau^{i}$ as in~\eqref{eq:optStopTime}, the Exact Law of Large Number shows that
  $$
  \lambda\{i:\, \tau^{i}\leq t\}=\lambda\{i:\, Y^{i}_{t} + c\rho(t)\geq r\}
  =1-F_{t}(r-c\rho(t))=\rho(t);
  $$
  that is, $\rho$ is an equilibrium.
\end{proof}

The following notions will be crucial in determining the convergence to the mean field limit.

\begin{definition}\label{de:transversalDef}
  Fix $t\geq0$. A solution $u\in[0,1]$ of $u+F_t(r-cu)=1$ is called \emph{left-increasing-transversal} (or left-transversal for short)  if
  \begin{equation}\label{eq:leftTrans}
  \mbox{for all $\eps>0$ there is $u' \in(u-\eps,u)$ such that $u'+F_t(r-cu')<1$}
  \end{equation}
  and \emph{right-increasing-transversal} (or right-transversal) if
  \begin{equation}\label{eq:rightTrans}
  \mbox{for all $\eps>0$ there is $u' \in(u,u+\eps)$ such that $u'+F_t(r-cu')>1$.}
  \end{equation}
  It is called \emph{increasing-transversal} if both~\eqref{eq:leftTrans} and~\eqref{eq:rightTrans} hold, and \emph{decreasing-transversal} if these hold with the inequality signs reversed.
\end{definition}

For instance, in Figure~\ref{fig:Comparison of special solutions}, $u^{m}$ is left-increasing-transversal and $u^{mrt}, u^{M}$ are right-increasing-transversal, but only $u^{Mlt}$ is increasing-transversal. A decreasing-transversal solution is also depicted.
Next, we introduce a quartet of solutions that will be important in Section~\ref{se:convergenceExtremal}.

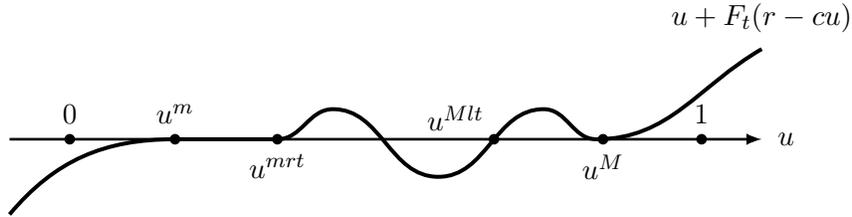
\begin{figure}[h]
\begin{center}
\begin{tikzpicture}
\draw[line width=1pt,-latex] (-5,0) to (5,0) node[right=2pt] {$u$};
\fill (-4.2,0) circle (2pt) node[above=2pt] at (-4.2,0) {0};
\fill (4.2,0) circle (2pt) node[above=2pt] at (4.2,0) {1};
\draw[line width=1.5pt] (-5,-1) to[out=50,in=180] (-2.8,0) to[out=0,in=180] (-1.44,0) to[out=0,in=180] (-0.7,.4) 
          to[out=0,in=180] (0.7,-.5) to[out=0,in=180] (2.1,.4) to[out=0,in=180] (2.8,0) to[out=0,in=210] (5,1.2) node[above=2pt] {$u+F_t(r-cu)$};
\fill (-2.8,0) circle (2pt) node[above=2pt]{$u^{m}$};
\fill (-1.44,0) circle (2pt) node[below=2pt]{$u^{mrt}$};
\fill (1.44,0) circle (2pt) node[above=8pt,left]{$u^{Mlt}$};
\fill (2.89,0) circle (2pt) node[below=2pt]{$u^{M}$};
\end{tikzpicture}
\end{center}
\caption{Solutions $u^m$, $u^{mrt}$, $u^{Mlt}$ and $u^M$}
\label{fig:Comparison of special solutions}
\end{figure}

\begin{lemma}\label{le:solutionsEx}
  Fix $t\geq0$. The equation $u+F_t(r-cu)=1$ has a minimal solution $u^{m}\in[0,1]$, a maximal solution $u^{M}\in[0,1]$, a minimal right-transversal solution $u^{mrt}\in[0,1]$, and a maximal left-transversal solution $u^{Mlt}\in[0,1]$.
\end{lemma}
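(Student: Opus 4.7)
The plan is to introduce $g(u) := u + F_t(r - cu) - 1$ and analyze its zero set on $[0,1]$. By the assumed continuity of $F_t$, the function $g$ is continuous, and the boundary values $g(0) = F_t(r) - 1 \le 0$ and $g(1) = F_t(r - c) \ge 0$, together with the intermediate value theorem, show that $Z := \{u \in [0,1] : g(u) = 0\}$ is a nonempty compact set. Therefore $u^m := \min Z$ and $u^M := \max Z$ exist and are, by construction, the minimal and maximal solutions of $u + F_t(r-cu) = 1$ in $[0,1]$.

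Next I would verify that $u^m$ is left-transversal and $u^M$ is right-transversal, so that the sets $Z^{lt}$ and $Z^{rt}$ of left- and right-transversal zeros are both nonempty. For left-transversality of $u^m$: if $u^m = 0$, then for any $u' \in (-\eps, 0)$ one has $g(u') \le u' < 0$; if $u^m > 0$, minimality of $u^m$ together with $g(0) \le 0$ forces $g < 0$ throughout $[0, u^m)$ by IVT, so any $u'$ just below $u^m$ is a witness. The argument that $u^M$ is right-transversal is symmetric, using $g(1) \ge 0$ and (in case $u^M = 1$) the bound $g(u') \ge u' - 1 > 0$ for $u' > 1$.

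Finally I would set $u^{Mlt} := \sup Z^{lt}$ and $u^{mrt} := \inf Z^{rt}$ and check that these bounds are attained. Taking $u_n \in Z^{rt}$ with $u_n \downarrow u^{mrt}$, closedness of $Z$ yields $u^{mrt} \in Z$; if no $u_n$ already equals $u^{mrt}$, then for any $\eps > 0$ I would choose $n$ with $u_n \in (u^{mrt}, u^{mrt} + \eps/2)$ and invoke right-transversality of $u_n$ to extract $u' \in (u_n, u_n + \eps/2) \subset (u^{mrt}, u^{mrt} + \eps)$ with $g(u') > 0$. The case of $u^{Mlt}$ is symmetric, producing witnesses $u' < u^{Mlt}$ with $g(u') < 0$. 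The main delicate point is exactly this last step: neither $Z^{lt}$ nor $Z^{rt}$ is closed in general, so compactness alone does not suffice. What saves the day is that the approximating zeros lie on the correct side of the target, so that their own transversality witnesses can be pulled into an arbitrarily small one-sided neighborhood of the limit.
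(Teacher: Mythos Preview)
Your proof is correct and follows essentially the same approach as the paper: use continuity and the boundary behavior of $g$ to obtain $u^m$ and $u^M$, observe that $u^m$ is left-transversal and $u^M$ right-transversal, and then verify that the sets of left-/right-transversal solutions are stable under increasing/decreasing limits so that the supremum/infimum is attained. The paper states this stability in one line (``it follows directly from the definition that the set of left-transversal solutions is stable under increasing limits''), while you spell out the witness-pulling argument explicitly and handle the boundary cases $u^m=0$, $u^M=1$ separately; these are cosmetic differences only.
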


\begin{proof}
  Since $G(u):=u+F_t(r-cu)<1$ for $u<0$ and $G(u)>1$ for $u>1$, the existence of $u^{m}$ and $u^{M}$ is immediate from the continuity of $G$. The fact that $G(u)<1$ for all $u<u^{m}$ entails that $u^{m}$ is left-transversal, and since it follows directly from the definition that the set of left-transversal solutions is stable under increasing limits, it follows that $u^{Mlt}$ exists. The argument for $u^{mrt}$ is similar.
\end{proof}

As illustrated in Figure~\ref{fig:Comparison of special solutions}, these four solutions may be distinct, and while $u^{m}$ is automatically left-transversal, it can happen that $u^{mrt}$ is not. Similarly for $u^{M}$ and $u^{Mlt}$. We can also note that $u^{mrt}\leq u^{Mlt}$ may fail, say if the graph is replaced by a flat stretch on $[u^{m},u^{M}]$. But in more generic cases, and in particular whenever $u^{m}$ and $u^{M}$ are not local extrema, the quartet describes at most two distinct solutions $u^m=u^{mrt} \leq u^{Mlt}=u^M$ and these are then increasing-transversal.

In view of Lemma~\ref{le:solutionsEx} we may define, given $t\geq0$,
\begin{equation}\label{eq:partMFGequlibDefs}
  \rho^{m}(t)=u^{m},\quad\rho^{M}(t)=u^{M},\quad\rho^{mrt}(t)=u^{mrt},\quad\rho^{Mlt}(t)=u^{Mlt}.
\end{equation}

Using the increase of $Y_{t}$ and~\eqref{eq:jointRC}, one can check that $\rho^{m},\rho^{M},\rho^{mrt},\rho^{Mlt}$ are increasing, $\rho^{M}$ and $\rho^{mrt}$ are right-continuous, and $\rho^{m}$ and $\rho^{Mlt}$ are left-continuous (but not continuous in general).

\begin{corollary}\label{co:minMaxEquilib}
  (i) If $\rho: \R_{+}\to[0,1]$ is any increasing function such that~\eqref{eq:master} holds, then $\rho(t+)$ is an equilibrium.
  
  (ii) The functions $t\mapsto \rho^m(t+)$ and $t\mapsto\rho^M(t)$ are the minimal and maximal equilibria of the mean field game; i.e., they are equilibria and any other equilibrium $\rho$ satisfies $\rho^m(t+)\leq \rho(t)\leq \rho^M(t)$ for all $t\geq0$.
\end{corollary}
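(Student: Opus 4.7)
The plan is to apply Proposition~\ref{pr:MFGequilib} in both parts, using the joint right-continuity property~\eqref{eq:jointRC} as the essential analytic tool, and then to deduce the ordering from the definitions of $u^m,u^M$ as extremal pointwise solutions.

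For part (i), given $\rho$ increasing with $\rho(t)+F_t(r-c\rho(t))=1$ for all $t\geq 0$, I would check that $\tilde\rho(t):=\rho(t+)$ satisfies the three conditions of Proposition~\ref{pr:MFGequilib}. Monotonicity is immediate since the right limit of an increasing function is increasing. Right-continuity is a standard fact about right limits of monotone functions. The master equation for $\tilde\rho$ is the only nontrivial step: taking a sequence $s\downarrow t$, one has $\rho(s)\to\tilde\rho(t)$ by definition, and by~\eqref{eq:jointRC} the map $(s,u)\mapsto F_s(r-cu)$ is jointly right-continuous at $(t,\tilde\rho(t))$, so $F_s(r-c\rho(s))\to F_t(r-c\tilde\rho(t))$. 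Passing to the limit in $\rho(s)+F_s(r-c\rho(s))=1$ yields $\tilde\rho(t)+F_t(r-c\tilde\rho(t))=1$, as desired.

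For part (ii), I would first argue that $\rho^M$ and $\rho^m(\cdot+)$ are equilibria. By~\eqref{eq:partMFGequlibDefs} and the definition of $u^m$, $u^M$ in Lemma~\ref{le:solutionsEx}, both $\rho^m$ and $\rho^M$ are pointwise solutions of the master equation; the monotonicity and one-sided continuity properties were already recorded just after~\eqref{eq:partMFGequlibDefs}. Hence $\rho^M$ is directly an equilibrium by Proposition~\ref{pr:MFGequilib}, and $\rho^m(\cdot+)$ is an equilibrium by applying part (i) to $\rho^m$.

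For the extremality, let $\rho$ be any mean field equilibrium. By Proposition~\ref{pr:MFGequilib} it satisfies $\rho(t)+F_t(r-c\rho(t))=1$ for every $t$, so for each fixed $t$ the value $\rho(t)$ is a zero of $u\mapsto u+F_t(r-cu)-1$. By definition of $u^m$ and $u^M$ at time $t$, this forces $\rho^m(t)\leq \rho(t)\leq \rho^M(t)$. The upper bound $\rho(t)\leq \rho^M(t)$ is already in the required form. For the lower bound, taking the right limit at $t$ and using the right-continuity of $\rho$ gives $\rho^m(t+)\leq\rho(t+)=\rho(t)$, completing the proof.

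The main subtlety is the passage to right limits in the master equation in part (i); this is where the joint right-continuity~\eqref{eq:jointRC} (itself a consequence of the continuity of $F_t$ and the right-continuity of the paths $Y^i$) is indispensable, since without it one could not conclude that the equilibrium relation survives the regularization $\rho\mapsto\rho(\cdot+)$.
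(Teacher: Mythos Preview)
Your proposal is correct and follows essentially the same approach as the paper: in~(i) you pass to right limits in~\eqref{eq:master} via the joint right-continuity~\eqref{eq:jointRC} and then invoke Proposition~\ref{pr:MFGequilib}, and in~(ii) you combine the pointwise extremality $\rho^m\leq\rho\leq\rho^M$ with the right-continuity of equilibria from Proposition~\ref{pr:MFGequilib}. The only cosmetic difference is that the paper treats $\rho^M$ via part~(i) as well (which is harmless since $\rho^M$ is already right-continuous), whereas you apply Proposition~\ref{pr:MFGequilib} to it directly.
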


\begin{proof}
  (i) If $\rho$ is any increasing function such that~\eqref{eq:master} holds, then the joint right-continuity in~\eqref{eq:jointRC} implies that $\rho(t+)+F_t(r-c\rho(t+))=1$ for all $t\geq0$. It now follows from Proposition~\ref{pr:MFGequilib} that $\rho(t+)$ is an equilibrium.

  (ii) Both $\rho^m(t+)$ and $\rho^M(t)$ are equilibria by~(i). If $\rho$ is any equilibrium, then it is necessarily right-continuous by Proposition~\ref{pr:MFGequilib} and thus $\rho^m\leq \rho\leq \rho^M$ implies $\rho^m(t+)\leq \rho(t)\leq \rho^M(t)$ for all $t\geq0$.
\end{proof}

\section{Convergence to Extremal Equilibria}
\label{se:convergenceExtremal}

The main goal of the last two sections is to understand which mean field equilibria are limits of $n$-player equilibria. In brief, we will see that mean field equilibria described by increasing-transversal solutions of~\eqref{eq:master} (on a sufficiently large sets of times $t$) are such limits, whereas other equilibria need not be proper limits of $n$-player equilibria; they merely occur as parts of mixtures which are limits.

In this section, we focus on the convergence to the minimal and maximal mean field equilibria; the less straightforward interior case is treated in the next section. As a first step, we relate limits of arbitrary $n$-player equilibria to mean field equilibria at a fixed time. We will see in Example~\ref{ex:twoType} that such limits  need not be deterministic mean field equilibria as defined in the preceding section, hence the following result relates limits to mixtures of equilibria. This is in line with the results of~\cite{CarmonaDelarueLacker.17,Lacker.14} stating that $n$-player equilibria converge to ``weak'' equilibria of the mean field game, while also illustrating that randomization can indeed occur in a quite natural example.

Given a closed set $A\subseteq\R$, we say that a sequence $(\xi_n)$ of random variables is \emph{asymptotically concentrated} on $A$ if $\lim_{n\to \infty}P(\xi_n\in A_{\eps})=1$ for all $\eps>0$, where $A_{\eps}=\{x\in\R:\, d(x,A)<\eps\}$ is the open $\eps$-neighborhood of $A$. When $(\xi_n)$ is uniformly bounded, as it will be the case below, this is equivalent to any weak cluster point of $(\xi_n)$ being concentrated on $A$. Moreover, for $t\geq0$, we denote the solutions of~\eqref{eq:master} by
$$
  \cU(t)=\{u\in[0,1]: u+F_{t}(r-cu)=1\}.
$$

\begin{proposition}
\label{pr:convergenceGeneralEquilib}
Fix $t\geq 0$ and let $(\rho_{n})_{n\geq1}$ be a sequence of $n$-player equilibria. Then $\rho_n(t)$ is asymptotically concentrated on $\cU(t)$.
\end{proposition}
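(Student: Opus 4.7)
The plan is to combine the necessary condition~\eqref{eq:NplayerEquilibCond} with the Glivenko--Cantelli theorem applied to the empirical c.d.f.\ of the signals $Y_t^1,\dots,Y_t^n$. Let
$$
  \hat F_n(y) := \#\{i:\, Y_t^i\leq y\}/n.
$$
Since $F_t$ is continuous, the $Y_t^i$ are pairwise distinct $P$-a.s., the jumps of $\hat F_n$ have size $1/n$, and the classical Glivenko--Cantelli theorem yields
$$
  \sup_y |\hat F_n(y-) - F_t(y)| \to 0 \quad \text{$P$-a.s.}
$$

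Next, I rewrite the second equality of~\eqref{eq:NplayerEquilibCond} at the random level $k=n\rho_n(t)$ as
$$
  \hat F_n\bigl((r-c\rho_n(t))-\bigr) = 1-\rho_n(t),
$$
and introduce the continuous function $g_t(u) := u + F_t(r-cu)-1$ on $[0,1]$, so that $\cU(t) = g_t^{-1}(\{0\})$. A one-line substitution then gives the pointwise identity
$$
  g_t(\rho_n(t)) = F_t(r-c\rho_n(t)) - \hat F_n\bigl((r-c\rho_n(t))-\bigr),
$$
whose right-hand side is bounded in absolute value by $\sup_y |F_t(y)-\hat F_n(y-)|$. Hence $g_t(\rho_n(t)) \to 0$ almost surely.

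To conclude, I would fix $\eps>0$. If $[0,1]\setminus A_\eps$ is empty the claim is trivial; otherwise it is a compact subset of $[0,1]$ disjoint from $\cU(t)=\{g_t=0\}$, so by continuity of $g_t$ there exists $\delta>0$ with $|g_t|\geq \delta$ throughout this set. Therefore
$$
  P(\rho_n(t)\notin A_\eps) \leq P(|g_t(\rho_n(t))|\geq \delta) \to 0,
$$
which is precisely the asymptotic concentration statement.

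The proof is essentially a consequence of the empirical law of large numbers, and the main (rather modest) obstacle is the passage from the left-limit $\hat F_n(y-)$ to $F_t(y)$ at the random argument $r-c\rho_n(t)$; this is exactly why the \emph{uniform} form of Glivenko--Cantelli together with the continuity of $F_t$ is invoked, rather than a pointwise statement. No game-theoretic input beyond~\eqref{eq:NplayerEquilibCond} is required, and no tightness/subsequence extraction is needed, since convergence holds directly in probability.
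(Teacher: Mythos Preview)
Your argument is correct and in fact cleaner than the paper's. Both proofs rest on the same two ingredients---the necessary condition~\eqref{eq:NplayerEquilibCond} and the uniform Glivenko--Cantelli theorem---but the paper proceeds interval by interval: it fixes a subinterval of $[0,1]$ where $g_t$ is strictly negative (resp.\ positive), introduces auxiliary indicators $X_i$ and $Z_i^l$, and shows via inclusion of events that $P(\rho_n(t)\in\text{that interval})\to 0$; finally it pieces together the finitely many intervals covering $[0,1]\setminus \cU(t)_\eps$. You instead observe directly that the equilibrium identity forces $g_t(\rho_n(t))=F_t(r-c\rho_n(t))-\hat F_n((r-c\rho_n(t))-)$, which is dominated by the Glivenko--Cantelli supremum and hence tends to zero almost surely; a single continuity/compactness step then finishes. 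This bypasses the interval decomposition and the auxiliary variables entirely, and even yields almost sure (rather than merely in probability) convergence of $g_t(\rho_n(t))$ to $0$. The paper's $\eps_n(x)$ is, incidentally, exactly your $F_t(r-cx)-\hat F_n((r-cx)-)$, so the underlying estimate is the same; you have simply evaluated it at the random point $x=\rho_n(t)$ rather than arguing over a grid.
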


\begin{proof} 
  We first show that for any interval $[u_{0},u_{1}]\subseteq [0,1]$ such that $u\mapsto u+F_t(r-cu)$ is strictly smaller than $1$ on $[u_{0},u_{1}]$,
\begin{equation}\label{eq:emptyIntSupport}
  P(u_{0} + \eps' \leq \rho_n(t)\leq u_{1}-\eps')\to 0 \quad\mbox{for all}\quad \eps'>0.
\end{equation}
Indeed, let $u_{0}< u_{1}$ be as above. By increasing the value of $u_{1}$ if necessary, we may assume without loss of generality that $u\mapsto u+F_t(r-cu)$ attains its maximum over $[u_{0},u_{1}]$ at $u_{1}$. Given $0<\eps<u_{1}-u_{0}$, we can then choose by continuity some $u \in (u_{1}-\eps,u_{1})$ such that
\begin{equation}\label{eq:leftGlobalMax}
  u'+F_t(r-cu')\leq u+F_t(r-cu)<1 \quad\mbox{for all}\quad u_{0}\leq u'\leq u.
\end{equation}
Furthermore, setting
$$
\eps_n(x)=\frac{\#\{Y_t^i+cx\geq r\}}{n}-(1-F_t(r-cx)),\quad x\in\R
$$
 and $\eps_n=\sup_{x\in \mathbb{R}}\{|\eps_n(x)|\}$, we have $\eps_n\to 0$ a.s.\ by the uniform convergence in the Glivenko--Cantelli theorem.
Let $X_i=\mathbf{1}_{\{Y_t^i+cu\geq r\}}$, then 
\begin{equation}\label{eq:epsNdef}
  \frac{X_1+\cdots+X_n}{n}=1-F_t(r-cu)+\eps_n(u).
\end{equation}
Denote by $[x]$ the largest integer $k\leq x$. For any $[u_{0} n]+1 \leq l \leq [u n]$, let $Z_i^l=\mathbf{1}_{\{Y_t^i+c\smsp \frac{l}{n}\geq r\}}$, then similarly 
$$
  \frac{Z_1^l+\cdots+Z_n^l}{n}=1-F_t(r-c\smsp \tfrac{l}{n})+\eps_n(\tfrac{l}{n}).
$$
On the event $\{Z_1^l+\cdots+Z_n^l =l\}$, we then have
$$1+\eps_n(\tfrac{l}{n})=\frac{l}{n}+F_t(r-c\smsp \tfrac{l}{n})\leq u+F_t(r-cu)$$
by~\eqref{eq:leftGlobalMax} and thus
$$
\frac{X_1+\cdots+X_n}{n}=1-F_t(r-cu)+\eps_n(u)\leq u-\eps_n(\tfrac{l}{n})+\eps_n(u)\leq u+2\eps_n.
$$
Combining this observation with~\eqref{eq:NplayerEquilibCond}, we have for all $[u_{0} n]+1 \leq l \leq [u n]$ that
\begin{align*}
\Big\{\rho_n(t)=\frac{l}{n}\Big\}&\subseteq \Big\{\#\big\{Y_t^i+c\smsp \tfrac{l}{n}\geq r\big\}=l\Big\}\\
                                                             &=\Big\{\frac{Z_1^l+\cdots+Z_n^l}{n}=\frac{l}{n}\Big\}\\
                                                             &\subseteq \Big\{\frac{X_1+\cdots+X_n}{n}\leq u+2\eps_n\Big\}.
\end{align*}
Hence, 
$$\Big\{ \frac{[u_{0} n]+1}{n} \leq \rho_n(t)\leq \frac{[u n]}{n}\Big\}\subseteq \Big\{\frac{X_1+\cdots+X_n}{n}\leq u+2\eps_n\Big\}$$
and thus
\begin{align*}
  P\Big(\frac{[u_{0} n]+1}{n} \leq \rho_n(t)\leq \frac{[u n]}{n}\Big)
  &\leq P\Big(\frac{X_1+\cdots+X_n}{n}\leq u+2\eps_n\Big)\\
  &= P\Big(u+F_t(r-cu) \geq 1-2\eps_n+\eps_n(u)\Big)\to0
\end{align*}
by~\eqref{eq:epsNdef} and~\eqref{eq:leftGlobalMax}.
Since $\eps>0$ was arbitrary, this shows~\eqref{eq:emptyIntSupport}. 

In a symmetric way, one can show the analogue of~\eqref{eq:emptyIntSupport} for intervals where $u\mapsto u+F_t(r-cu)$ is strictly larger than $1$. Since for any $\eps>0$ the complement of $\cU(t)_{\eps}$ consists of finitely many intervals of one of these two types, the claim follows.
\end{proof}

Next, we narrow down the asymptotic support for the minimal and maximal $n$-player equilibria $\rho^m_n$ and $\rho^M_n$. We will see in Section~\ref{se:negativeResultsExtremal} that the following result is optimal and the limiting support is not a singleton in general. We recall the notation introduced in~\eqref{eq:partMFGequlibDefs}.

\begin{lemma}
\label{le:convergenceMinMaxEquilib}
Fix $t\geq 0$.
\begin{enumerate}
\item
The minimal $n$-player equilibrium $\rho^m_n(t)$ is asymptotically concentrated on $[\rho^m(t),\rho^{mrt}(t)]\cap \cU(t)$.
\item
The maximal $n$-player equilibrium $\rho^M_n(t)$ is asymptotically concentrated on $[\rho^{Mlt}(t),\rho^M(t)]\cap \cU(t)$.
\end{enumerate}
\end{lemma}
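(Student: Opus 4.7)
The plan is to combine Proposition~\ref{pr:convergenceGeneralEquilib}, which already pins any limit of $\rho_n(t)$ to $\cU(t)$, with a one-sided refinement that exploits the extremal nature of $\rho_n^m$ and $\rho_n^M$. I would focus on~(i) and indicate the symmetric changes for~(ii) at the end. Since Proposition~\ref{pr:convergenceGeneralEquilib} gives asymptotic concentration of $\rho_n^m(t)$ on $\cU(t) \subseteq [\rho^m(t),\rho^M(t)]$, the lower bound $\rho_n^m(t) > \rho^m(t)-\eps$ with high probability is automatic. The content of~(i) is therefore the upper bound
$$
  P\bigl(\rho_n^m(t) > \rho^{mrt}(t) + \eps\bigr) \longrightarrow 0 \quad \text{for every } \eps > 0.
$$
Once this is proved, an elementary bookkeeping step upgrades the conjunction of the two bounds with Proposition~\ref{pr:convergenceGeneralEquilib} to asymptotic concentration on $[\rho^m(t),\rho^{mrt}(t)] \cap \cU(t)$: a typical sample of $\rho_n^m(t)$ sits within $\eps$ of some $u \in \cU(t)$, and either $u$ already lies in the target set (if $u \leq \rho^{mrt}(t)$), or the upper bound and the triangle inequality pin $\rho_n^m(t)$ into a small neighborhood of the target point $\rho^{mrt}(t) \in \cU(t)$ itself.

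For the upper bound I would feed~\eqref{eq:CharNMinEquilib} into the uniform Glivenko--Cantelli estimate already used in the proof of Proposition~\ref{pr:convergenceGeneralEquilib}. On $\{\rho_n^m(t) = k/n\}$, relabeling $j = k-l$ turns the second line of~\eqref{eq:CharNMinEquilib} into $\#\{Y_t^i + c\tfrac{j}{n} \geq r\} \geq j+1$ for every $0 \leq j < k$. Dividing by $n$ and using $\eps_n := \sup_x|\eps_n(x)| \to 0$ a.s., with $\eps_n(x) = \#\{Y_t^i + cx \geq r\}/n - (1 - F_t(r-cx))$ as in that proof, this becomes the analytic bound
$$
  \tfrac{j}{n} + F_t\bigl(r - c\tfrac{j}{n}\bigr) \leq 1 + \eps_n - \tfrac{1}{n}, \quad 0 \leq j < k.
$$
Assume $\rho^{mrt}(t) < 1$ (otherwise the upper bound is trivial) and shrink $\eps$ so that $\rho^{mrt}(t) + \eps < 1$. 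By right-transversality there exists $u^* \in (\rho^{mrt}(t),\rho^{mrt}(t)+\eps)$ with $u^* + F_t(r-cu^*) = 1+\eta$ for some $\eta > 0$, and one tests the displayed bound at $j^* = \lfloor n u^* \rfloor$: on $\{\rho_n^m(t) > \rho^{mrt}(t)+\eps\}$ we have $j^* < k$ for all large $n$, so the bound yields $j^*/n + F_t(r - c j^*/n) \leq 1 + \eps_n$. The joint right-continuity~\eqref{eq:jointRC} drives $j^*/n + F_t(r - c j^*/n) \to u^* + F_t(r-cu^*) = 1+\eta$, so on this event $\eps_n \geq \eta/2$ eventually; since $\eps_n \to 0$ a.s., the event's probability vanishes.

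Part~(ii) mirrors this argument through~\eqref{eq:CharNMaxEquilib} and the left-transversality of $\rho^{Mlt}$: on $\{\rho_n^M(t) = k/n\}$ the maximal characterization gives $j/n + F_t(r - cj/n) \geq 1 - \eps_n$ for every $k \leq j \leq n-1$, and given $\eps > 0$ one picks $u^* \in (\rho^{Mlt}(t) - \eps,\rho^{Mlt}(t))$ with $u^* + F_t(r - cu^*) < 1$ and tests at $j^* = \lceil n u^* \rceil$ to derive a contradiction of the same flavor. The main obstacle in the whole argument is the clean passage from the combinatorial statements~\eqref{eq:CharNMinEquilib}--\eqref{eq:CharNMaxEquilib} to analytic inequalities directly comparable with the transversality hypothesis; the uniform Glivenko--Cantelli control on $\eps_n$ already used in Proposition~\ref{pr:convergenceGeneralEquilib} is exactly what makes this passage work, and the rest is a routine argument by contradiction.
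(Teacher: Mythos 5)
Your proof is correct and follows essentially the same route as the paper: reduce to the one-sided bound $P(\rho^m_n(t)>\rho^{mrt}(t)+\eps)\to0$ via Proposition~\ref{pr:convergenceGeneralEquilib} and the extremality of $\rho^m(t)$, then play the characterization~\eqref{eq:CharNMinEquilib} against right-transversality through a law-of-large-numbers estimate at a test point just above $\rho^{mrt}(t)$ (and symmetrically for~(ii)). The only cosmetic differences are that the paper applies the pointwise LLN at the single point $u$ together with the monotonicity $\#\{Y_t^i+c\smsp\frac{[un]}{n}\geq r\}\leq\#\{Y_t^i+cu\geq r\}$ where you invoke the uniform Glivenko--Cantelli bound $\eps_n$, and that the convergence $F_t(r-c\smsp j^*/n)\to F_t(r-cu^*)$ is more accurately justified by the (assumed) continuity of $y\mapsto F_t(y)$ than by~\eqref{eq:jointRC}, which is immaterial here.
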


\begin{proof}
  (i) In view of Proposition~\ref{pr:convergenceGeneralEquilib} and the definition of $\rho^{m}(t)$, it suffices to show that 
  \begin{equation}\label{eq:proofMinMaxConv}
    P(\rho^m_n(t)\geq \rho^{mrt}(t)+\eps')\to 0 \quad\mbox{for all}\quad \eps'>0.
  \end{equation} 
  Let $\eps>0$. As $\rho^{mrt}(t)$ is right-transversal we can find $u \in (\rho^{mrt}(t),\rho^{mrt}(t)+\eps)$ such that $1-F_t(r-cu)<u$. For $n$ large enough, we then have $\rho^{mrt}(t)< [u n]/n\leq u$. Let $X_i=\mathbf{1}_{\{Y_t^i+cu\geq r\}}$, then 
$$
\frac{X_1+\cdots+X_n}{n}\to EX_i=1-F_t(r-cu)\quad\mbox{a.s.}
$$
by the Law of Large Numbers. Hence,
$$
\frac{X_1+\cdots+X_n}{n}-\frac{[u n]}{n}\to 1-F_t(r-cu)-u<0\quad\mbox{a.s.}
$$
Using also~\eqref{eq:CharNMinEquilib}, we conclude that
\begin{align*}
P(\rho^m_n(t)\geq u)&\leq P\Big(\rho^m_n(t)\geq \frac{[u n]}{n}\Big)\\
                                       &\leq P\Big(\#\big\{Y_t^i+c\smsp \frac{[u n]}{n}\geq r\big\}\geq [u n]\Big)\\
                                       &\leq P\Big(\frac{\#\{Y_t^i+cu\geq r\}}{n}\geq \frac{[u n]}{n}\Big)\\
                                       &=P\Big(\frac{X_1+\cdots+X_n}{n}-\frac{[u n]}{n}\geq 0\Big)\to 0.
\end{align*}
As $\eps>0$ was arbitrary, the above implies~\eqref{eq:proofMinMaxConv}.

(ii) The arguments are similar to~(i) and therefore omitted.
\end{proof}

Next, we introduce an appropriate notion of convergence for dynamic equilibria as required for our main results. Note that given an increasing function, its right- and left-continuous limits (and all functions between these) differ only by the allocation of the function value at the (countably many) jumps.
The fact that mean field equilibria are right-continuous, cf.\ Proposition~\ref{pr:MFGequilib}, reflects the fact that agents stopping at time $t$ are counted as having left the game at time $t$, whereas left-continuity would correspond to counting them as leaving immediately after~$t$. Since this difference is not fundamental, it seems reasonable to consider limits ``up to taking right-continuous versions.'' This has been accomplished by notions of so-called Fatou convergence, e.g.~\cite{Kramkov.96, Zitkovic.02}, in other areas of stochastic analysis.

For increasing functions $\varphi_{n},\varphi$ on $\R_{+}$, we have that $(\liminf_{n}\varphi_{n})(t+)=(\limsup_{n}\varphi_{n})(t+) = \varphi(t+)$ holds for all $t\in\R_{+}$ if and only if $\lim \varphi_{n}(t)=\varphi(t)$ for all $t$ in a dense subset $D\subseteq\R_{+}$. This motivates the following.

\begin{definition}\label{de:Fatou}
A sequence $(\rho_{n})_{n\geq1}$ of $n$-player equilibria  \emph{Fatou converges in probability} to a mean field equilibrium $\rho$ if there exists a dense set $D\subseteq\R_{+}$ such that $\rho_{n}(t)\to\rho(t)$ in probability for all $t\in D$.
\end{definition}

We note that by a diagonalization procedure, Fatou convergence in probability implies Fatou convergence a.s.\ along a subsequence $(n_{k})$, where the a.s.\ convergence is defined by direct analogy to the above. In particular, it then follows that the right-continuous versions of $\liminf_{k}\rho_{n_{k}}$ and $\limsup_{k}\rho_{n_{k}}$ coincide with $\rho$ a.s.

With these notions in place, we can establish the convergence of extremal equilibria in the increasing-transversal case. (Note that the extremal equilibria cannot be decreasing-transversal; they are either increasing-transversal or tangential.)

\begin{theorem}\label{th:convUnderH}
  Suppose that for all $t$ in a dense subset $D\subseteq \R_{+}$, the minimal solution $u\in[0,1]$ of $u+F_t(r-cu)=1$ is increasing-transversal. 
  Then the minimal $n$-player equilibria $\rho^{m}_{n}$ Fatou converge in probability to the minimal mean field equilibrium as $n\to\infty$.
  
  The analogous assertion holds for the maximal equilibria $\rho^{M}_{n}$.
\end{theorem}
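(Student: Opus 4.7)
The plan is to combine Lemma~\ref{le:convergenceMinMaxEquilib} with the transversality hypothesis in order to pin down $\rho^m_n(t)$ to a single deterministic limit at each $t\in D$, and then to reconcile this limit with the value at $t$ of the minimal mean field equilibrium (which by Corollary~\ref{co:minMaxEquilib}(ii) is $\rho^m(t+)$). The joint right-continuity~\eqref{eq:jointRC} established in Proposition~\ref{pr:MFGequilib} will be the key tool at both steps.

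For the minimal claim, first observe that an increasing-transversal minimal solution $u^m$ is in particular right-transversal; since $u^{mrt}$ is defined as the smallest right-transversal solution and $u^m$ is always a solution, this forces $u^{mrt}=u^m$, i.e.\ $\rho^{mrt}(t)=\rho^m(t)$ for all $t\in D$. Lemma~\ref{le:convergenceMinMaxEquilib}(i) then collapses the asymptotic support $[\rho^m(t),\rho^{mrt}(t)]\cap\cU(t)$ to the singleton $\{\rho^m(t)\}$, which is equivalent to the convergence in probability $\rho^m_n(t)\to\rho^m(t)$. Next, I would argue that right-transversality additionally forces right-continuity of $\rho^m$ at $t\in D$: for any $\eps>0$, pick $u'\in(\rho^m(t),\rho^m(t)+\eps)$ with $u'+F_t(r-cu')>1$; by~\eqref{eq:jointRC} the same strict inequality persists for $s$ slightly above $t$, and since $u\mapsto u+F_s(r-cu)$ is continuous with value $\leq 1$ at $u=0$, the intermediate value theorem yields a solution below $u'$, so $\rho^m(s)\leq u'<\rho^m(t)+\eps$. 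Combined with monotonicity $\rho^m(s)\geq\rho^m(t)$, this gives $\rho^m(t+)=\rho^m(t)$. Hence $\rho^m_n(t)\to\rho^m(t+)$ in probability on the dense set $D$, which is exactly Fatou convergence in probability by Definition~\ref{de:Fatou}.

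The maximal claim is treated symmetrically: an increasing-transversal maximal solution is in particular left-transversal, so $\rho^{Mlt}(t)=\rho^M(t)$; Lemma~\ref{le:convergenceMinMaxEquilib}(ii) yields convergence in probability to $\rho^M(t)$, and since $\rho^M$ is already right-continuous it coincides throughout with the maximal mean field equilibrium, so no continuity adjustment is needed. I expect the main subtlety to be the right-continuity deduction for $\rho^m$ at $t\in D$ in the minimal case; without it, one would converge to $\rho^m(t)$ rather than to $\rho^m(t+)$, and matching the Fatou limit to the minimal mean field equilibrium would be more delicate (for instance, restricting to continuity points of $\rho^m$ within $D$ does not automatically give a dense set, since the countable jump set of $\rho^m$ could in principle cover $D$). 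The transversality-based right-continuity argument above circumvents this issue cleanly.
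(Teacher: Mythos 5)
Your proof is correct and follows essentially the same route as the paper's: transversality forces $\rho^{m}(t)=\rho^{mrt}(t)$ for $t\in D$, so Lemma~\ref{le:convergenceMinMaxEquilib} collapses the asymptotic support to a singleton and yields convergence in probability, with the maximal case handled symmetrically. Your extra step showing that right-transversality at $t\in D$ forces $\rho^{m}(t+)=\rho^{m}(t)$ (via~\eqref{eq:jointRC} and the intermediate value theorem) is a detail the paper's one-line proof leaves implicit, and it is a correct and worthwhile completion of the identification of the limit with the minimal mean field equilibrium $t\mapsto\rho^{m}(t+)$ from Corollary~\ref{co:minMaxEquilib}.
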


\begin{proof}
  By the hypothesis, $\rho^m(t)=\rho^{mrt}(t)$ for $t\in D$. Thus, Lemma~\ref{le:convergenceMinMaxEquilib} implies that $\lim \rho^{m}_{n}(t)=\rho^m(t)=\rho^{mrt}(t)$ in probability for $t\in D$. The analogue holds for $\rho^{M}_{n}$.
\end{proof}

Next, we discuss the transversality condition in more detail. In fact, if uniqueness holds for the mean field game, the condition is automatically satisfied and we conclude the following.

\begin{corollary}\label{co:uniqueness}
The following are equivalent:
\begin{enumerate}
\item the mean field game has a unique equilibrium $\rho$,
\item the equation $u+F_t(r-cu)=1$, $u\in [0,1]$ has a unique solution for a dense set of $t\in\R_{+}$.
\end{enumerate}
In that case, any sequence $(\rho_{n})_{n\geq1}$ of $n$-player equilibria Fatou converges in probability to $\rho$.
\end{corollary}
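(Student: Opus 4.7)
The plan is to prove the equivalence (i)$\Leftrightarrow$(ii) by exploiting the extremal equilibria $\rho^m(\cdot+)$ and $\rho^M$ supplied by Corollary~\ref{co:minMaxEquilib}, and then to deduce the convergence statement from Theorem~\ref{th:convUnderH}.

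For (ii)$\Rightarrow$(i), fix any $t\in D$ where~\eqref{eq:master} has a unique solution $u(t)$. Trivially $\rho^m(t)=\rho^M(t)=u(t)$, and since $\rho^m(\cdot+)$ is itself an equilibrium, maximality of $\rho^M$ gives $\rho^m(t+)\leq \rho^M(t)=u(t)$; monotonicity forces $\rho^m(t+)\geq \rho^m(t)=u(t)$, so $\rho^m(t+)=u(t)$. The sandwich $\rho^m(t+)\leq\rho(t)\leq\rho^M(t)$ from Corollary~\ref{co:minMaxEquilib}(ii) then pins down $\rho(t)=u(t)$ for every equilibrium $\rho$. Since $D$ is dense and equilibria are right-continuous by Proposition~\ref{pr:MFGequilib}, this extends to all $t\in\R_+$.

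For (i)$\Rightarrow$(ii) I argue by contrapositive. If (ii) fails, the set of times at which~\eqref{eq:master} has multiple solutions contains an open interval $(a,b)$, on which $\rho^m(t)<\rho^M(t)$. As an increasing function, $\rho^m$ has only countably many jumps, so there exists $t_0\in(a,b)$ at which $\rho^m$ is right-continuous. Then $\rho^m(t_0+)=\rho^m(t_0)<\rho^M(t_0)$, so the two equilibria $\rho^m(\cdot+)$ and $\rho^M$ disagree at $t_0$, contradicting~(i).

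For the final convergence assertion I verify that uniqueness at $t\in D$ implies the hypothesis of Theorem~\ref{th:convUnderH} at $t$. Writing $G(u)=u+F_t(r-cu)$, we have $G(0)=F_t(r)\leq 1$ and $G(1)=1+F_t(r-c)\geq 1$, so by continuity and uniqueness of the zero $u^*$ of $G-1$ in $[0,1]$ one gets $G<1$ on $[0,u^*)$ and $G>1$ on $(u^*,1]$ (with right-transversality extending trivially beyond $1$ in the edge case $u^*=1$, since $G(u)\geq u>1$ there). Thus $u^*$ is increasing-transversal, and Theorem~\ref{th:convUnderH} yields Fatou convergence in probability of both $\rho^m_n$ and $\rho^M_n$ to $\rho$. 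Finally, any $n$-player equilibrium $\rho_n$ satisfies $\rho^m_n\leq \rho_n\leq \rho^M_n$ by Propositions~\ref{pr:CharacterizationOfNMinEquilibrium} and~\ref{pr:CharacterizationOfNMaxEquilibrium}, and the sandwich propagates Fatou convergence in probability to any such sequence. The main obstacle to watch for is the direction (i)$\Rightarrow$(ii), which hinges on the pairing of countably many jumps of $\rho^m$ with the uncountably many times in any open interval; the remaining steps are essentially bookkeeping applications of results already in the paper.
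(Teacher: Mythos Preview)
Your proof is correct and follows essentially the same route as the paper: both directions of the equivalence hinge on Corollary~\ref{co:minMaxEquilib} together with the countability of jumps of the increasing function $\rho^m$, and the convergence follows from increasing-transversality plus Theorem~\ref{th:convUnderH}. Two minor remarks: the paper obtains transversality more cleanly by noting $G(u)<1$ for $u<0$ and $G(u)>1$ for $u>1$, which handles both edge cases $u^*=0,1$ at once (you treat $u^*=1$ but omit the symmetric $u^*=0$); conversely, your explicit sandwich $\rho^m_n\leq\rho_n\leq\rho^M_n$ spells out a step the paper leaves implicit.
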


\begin{proof}
  If (i) holds, then $\rho^{m}(t+)=\rho^{M}(t)$ for all $t\geq0$ by Corollary~\ref{co:minMaxEquilib}, and~(ii) follows since  $\rho^{m}(t+)=\rho^{m}(t)$ except at the (countably many) jumps of $\rho^{m}$. The converse holds because equilibria are right-continuous; cf.\ Proposition~\ref{pr:MFGequilib}. Finally, if $u+F_t(r-cu)=1$ has a unique solution, this solution is necessarily increasing-transversal since $u+F_t(r-cu)<1$ for $u<0$ and $u+F_t(r-cu)>1$ for $u>1$.
\end{proof}

While we will see below that the transversality condition in Theorem~\ref{th:convUnderH} cannot be dropped, we can argue that this condition holds for a generic choice of signals $Y^{i}$. More generally, we discuss the following hypothesis (again, note that the extremal solutions can never be decreasing-transversal).

\begin{definition}\label{de:H}
  We say that Hypothesis~(H) holds if for all $t$ in a dense subset of $\R_{+}$, any solution of $u\in[0,1]$ of $u+F_t(r-cu)=1$ is increasing-transversal or decreasing-transversal.
\end{definition}

While this hypothesis does not hold for all choices of $Y^{i}$, the exceptional set is small in the sense that a ``typical'' $F_{t}$ will not have a local extremum of $u\mapsto u+F_t(r-cu)$ at a solution of $u+F_t(r-cu)=1$, so that the latter must be transversal. As $t$ varies over $\R_{+}$, the non-transversal case is somewhat more likely to occur, but typically at only finitely many $t$ so that the hypothesis still holds. There seems to be no obvious way to quantify this. However, we state the following result which confirms the general intuition and shows that Hypothesis~(H) is always valid after a small perturbation of~$Y^{i}$.

\begin{proposition}
\label{prop:generic}
  For every $\delta>0$ there exists $0\leq \eps\leq\delta$ such that after replacing $Y^{i}_{t}$ with $Y^{i}_t+\eps$, Hypothesis~(H) is satisfied.
\end{proposition}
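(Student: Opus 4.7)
The plan is to prove more: for almost every $\eps \in [0, \delta]$ with respect to Lebesgue measure, Hypothesis~(H) is satisfied after replacing $Y^i_t$ by $Y^i_t + \eps$.

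After such a shift, the c.d.f.\ of the signal becomes $F^\eps_t(y) = F_t(y - \eps)$, so the perturbed master equation reads $G^\eps_t(u) := u + F_t(r - cu - \eps) = 1$. The substitution $w = r - cu - \eps$ is a bijection between $u \in \R$ and $w \in \R$ for each fixed $\eps$. Setting $\Psi_t(w) := cF_t(w) - w + (r - c)$, a direct computation gives $G^\eps_t(u) < 1 \iff \Psi_t(w) < \eps$ and $G^\eps_t(u) > 1 \iff \Psi_t(w) > \eps$. Since increasing $u$ corresponds to decreasing $w$, a solution $u$ is increasing-transversal (in the sense of Definition~\ref{de:transversalDef}) iff $\Psi_t$ strictly crosses the level $\eps$ from above to below at the corresponding $w$, and decreasing-transversal iff it crosses from below to above. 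Consequently, $u$ fails to be transversal if and only if $\eps$ is a local extremal value of the continuous function $\Psi_t$ at $w$, where the notion of extremum includes the plateau case in which $\Psi_t \equiv \eps$ on a one-sided neighborhood of $w$.

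The main ingredient is then the classical fact that for any continuous $\Psi: \R \to \R$, the set of local extremal values is at most countable. Indeed, any local maximum value $v$ equals $\sup_{(p, q)} \Psi$ for some rational interval $(p, q)$ (shrink a neighborhood of the maximum point to have rational endpoints), and analogously for local minima; plateau values are captured this way as both local max and local min values. Applied to each $\Psi_t$, this yields a countable set $E_t \subseteq \R$ of bad levels at time $t$.

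To conclude, fix any countable dense subset $D \subseteq \R_+$ and set $E := \bigcup_{t \in D} E_t$, a countable union of countable sets and hence of Lebesgue measure zero. Choose any $\eps \in [0, \delta] \setminus E$. For this $\eps$ and every $t \in D$, every solution of $u + F^\eps_t(r - cu) = 1$ is transversal by the characterization above, and the density of $D$ in $\R_+$ then gives Hypothesis~(H). I expect the main obstacle to be the careful verification of this characterization in terms of $\Psi_t$, requiring a short case analysis covering strict local extrema, plateaus and their endpoints; the countability lemma and the final selection step are routine thereafter.
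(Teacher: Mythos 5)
Your proposal is correct and takes essentially the same route as the paper's proof: both absorb the shift $\eps$ into the argument so that failure of transversality for the perturbed equation becomes the statement that a level ($\eps$ in your parametrization, $\eps/c$ in the paper's) is a local extremal value of a fixed function built from $F_t$, then invoke countability of the set of local extremal values, take a countable union over a dense set of times $t$, and choose $\eps$ outside the resulting countable set. The only cosmetic differences are your substitution $w=r-cu-\eps$ versus the paper's $v=u+\eps/c$, your explicit treatment of the plateau case, and your ``almost every $\eps\in[0,\delta]$'' conclusion versus the paper's choice of a sequence $a_k\downarrow 0$ of good values.
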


\begin{proof}
 Let us first observe that for any real function $f(x)$, the set of local minimum values $S=\{f(x):\, x$ is a local minimum of $f\}$ is countable. Indeed, for every $s\in S$ there is an open interval $I_s$ with rational endpoints such that $s=\min\{f(x):x\in I_s\}$. If $s,t\in S$ and $I_s=I_t$, then $s=t$, showing that $I: S\to \mathbb{Q}\times\mathbb{Q}$ is injective.

For fixed $t\geq 0$, denote by $S(t)$ the set of all local minimum and maximum values of $u\mapsto u+F_t(r-cu)-1$, then $\cup_{t\in \mathbb{Q}}S(t)$ is again countable. Thus, we can find a sequence $a_{k}\downarrow 0$ with $a_k \notin \cup_{t\in \mathbb{Q}_{+}}S(t)$. Set $\eps_k=ca_k$. Then, passing from $Y_{t}$ to $Y_t^{\eps_k}=Y_t+\eps_k$, the function under consideration is
$$
  u\mapsto u+F_t^{\eps_k}(r-cu)=u+F_t(r-cu-\eps_k)=(u+a_k)+F_t(r-c(u+a_k))-a_k.
$$
By the construction of $a_k$, we know that 1 is not a local extremum value of this function. However, if a solution of $u+F_t^{\eps_k}(r-cu)=1$ failed to be transversal, then 1 would be the value at a local extremum. 
\end{proof}

\subsection{Counterexamples}
\label{se:negativeResultsExtremal}

In this section, we illustrate that the assertion of Theorem~\ref{th:convUnderH} may fail without the transversality condition, and more generally that the intervals in Lemma~\ref{le:convergenceMinMaxEquilib} cannot be improved. The examples presented here are essentially static, meaning that $Y^{i}_{t}$ does not depend on $t$. For purely technical reasons, namely to ensure the finiteness of the optimal stopping times~\eqref{eq:optStopTime} as assumed throughout, we introduce a time horizon $T\in(0,\infty)$ at which $Y^{i}_{t}$ jumps to a value larger than $r$, thus ensuring that all players stop.

In the first example, we allow for atoms in the distribution of $Y^{i}_{t}$ to obtain an analytically tractable example. We argue below that the atoms are not essential to the observed phenomenon.

\begin{example}
\label{ex:twoType}
Let $r=c=1$ and let $Y^i_{t}=Y^i_{0}$, $0\leq t< T$ be constant i.i.d.\ processes such that $\Law(Y^i_t)=\frac{1}{2}\delta_{\frac{1}{2}}+\frac{1}{2}\delta_2$ for all $0\leq t<T$, and set $Y^{i}_{t}=2$ for $t\geq T$. Then the law of the minimal $n$-player  equilibrium $\rho^m_n(t)$ converges to $\frac{1}{2}\delta_{\frac{1}{2}}+\frac{1}{2}\delta_{1}$ for all $0\leq t <T$.
\end{example}

\begin{proof}
Proposition~\ref{pr:CharacterizationOfNMinEquilibrium} yields two cases for every $\omega$. If strictly less than $n/2$ of the realizations $\{Y_0^i(\omega),\, i=1,\dots, n\}$ equal 2, all players~$i$ with $Y_0^i(\omega)=2$ stop at $t=0$ and those with $Y_0^i(\omega)=1/2$ never stop. Whereas if $n/2$ or more of the realizations equal 2, then all agents stop at $t=0$. It follows that the law of $\rho^m_n(t)\equiv\rho^m_n(0)$ converges to $\frac{1}{2}\delta_{\frac{1}{2}}+\frac{1}{2}\delta_{1}$ as $n\to\infty$.
\end{proof}

The limit law $\frac{1}{2}\delta_{\frac{1}{2}}+\frac{1}{2}\delta_{1}$ can be seen as a mixture of the deterministic mean field equilibria $\rho^{m}(t)\equiv \frac{1}{2}$ and $\rho^{mrt}(t)\equiv 1$. In fact, with an appropriate definition allowing for randomized equilibria, this mixture is itself an equilibrium. However, a remarkable conclusion is that there are no $n$-player equilibria converging to the minimal equilibrium $\rho^{m}$.

\begin{corollary}\label{co:twoTypeMinNotLimit}
 In the context of Example~\ref{ex:twoType}, $\rho^{m}(t)$ is not a weak accumulation point of $n$-player equilibria, for any $0\leq t<T$.
\end{corollary}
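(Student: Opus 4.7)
The plan is to leverage the minimality property established in Proposition~\ref{pr:CharacterizationOfNMinEquilibrium} together with the explicit calculation already carried out in the proof of Example~\ref{ex:twoType}. Since $\rho^{m}_{n} \le \rho_{n}$ pathwise for every $n$-player equilibrium $\rho_{n}$, any asymptotic mass that $\rho^{m}_{n}(t)$ places at $1$ must be inherited by $\rho_{n}(t)$, and this alone will prevent $\rho_{n}(t)$ from concentrating at $\rho^{m}(t) = 1/2$.

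Concretely, write $m_{n} = \#\{i \le n : Y_{0}^{i} = 2\}$. From the case analysis in the proof of Example~\ref{ex:twoType} one reads off that for every $0 \le t < T$,
\[
  \{m_{n} \ge n/2\} \subseteq \{\rho^{m}_{n}(t) = 1\},
\]
and that $P(m_{n} \ge n/2) \to 1/2$ by the central limit theorem for the symmetric Bernoulli (or by a direct computation on the symmetric binomial). Combined with minimality, this gives the uniform lower bound
\[
  P(\rho_{n}(t) = 1) \ge P(\rho^{m}_{n}(t) = 1) \ge P(m_{n} \ge n/2) \longrightarrow \tfrac{1}{2}
\]
valid for \emph{any} sequence of $n$-player equilibria $\rho_{n}$.

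To conclude, I would argue by contradiction: if $\rho^{m}(t) = 1/2$ were a weak accumulation point, some subsequence of laws of $\rho_{n_{k}}(t)$ would converge weakly to $\delta_{1/2}$. Applying the Portmanteau theorem to the closed set $\{1\} \subset [0,1]$ would then give
\[
  0 \;=\; \delta_{1/2}(\{1\}) \;\ge\; \limsup_{k \to \infty} P(\rho_{n_{k}}(t) = 1) \;\ge\; \tfrac{1}{2},
\]
which is absurd. Hence $\rho^{m}(t)$ admits no such approximating sequence.

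I do not foresee a serious technical obstacle here; the argument is essentially a one-line consequence of the pathwise domination $\rho_{n} \ge \rho^{m}_{n}$ and the Bernoulli computation already present in the example. The only points requiring a small amount of care are fixing the interpretation of ``weak accumulation point'' (weak convergence of the laws of the $[0,1]$-valued random variables $\rho_{n}(t)$ to the Dirac mass at $1/2$) and ensuring that the Portmanteau inequality is invoked in the correct direction by working with the closed set $\{1\}$ rather than with an open neighbourhood of $1/2$.
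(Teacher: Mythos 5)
Your argument is correct and is essentially the paper's own proof: both rest on the pathwise domination $\rho_{n}(t)\geq\rho^{m}_{n}(t)$ from Proposition~\ref{pr:CharacterizationOfNMinEquilibrium} and the limit law $\tfrac{1}{2}\delta_{\frac{1}{2}}+\tfrac{1}{2}\delta_{1}$ of $\rho^{m}_{n}(t)$ computed in Example~\ref{ex:twoType}. You merely make the contradiction explicit by isolating the asymptotic mass $\tfrac{1}{2}$ at the point $1$ and invoking Portmanteau on the closed set $\{1\}$, which is a fine way to flesh out the paper's one-line conclusion.
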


\begin{proof}
Suppose that there exists a subsequence $\rho_{k}=\rho_{n_{k}}$ of $n_{k}$-player equilibria such that $\rho_{k}(t)\to\rho(t)=1/2$ weakly. Then $\rho_{k}(t)\geq\rho_{k}^{m}(t)$  and $\Law(\rho_{k}^{m}(t))\to \frac{1}{2}\delta_{\frac{1}{2}}+\frac{1}{2}\delta_{1}$ yield a contradiction.
\end{proof}

It may be useful to contrast this with the fact that $\rho^{m}$ is a limit of \emph{approximate} Nash equilibria. To wit, if all players~$i$ with $Y_0^i(\omega)=2$ stop at $t=0$ whereas those with $Y_0^i(\omega)=1/2$ do not stop until $T$, we obtain an approximate Nash equilibrium converging to $\rho^{m}$ as $n\to\infty$.

The following example is a smooth version of Example~\ref{ex:twoType} where $Y^{i}_{t}$ admits a density; see also Figure~\ref{fig:cdf of different density functions}(b). It is not analytically tractable but the qualitative behavior is  the same.

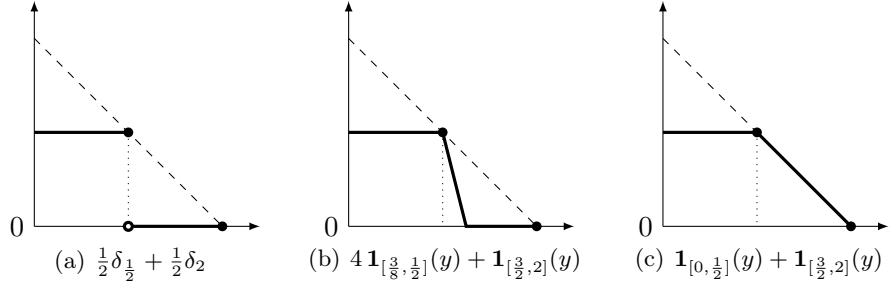
\begin{figure}
  \centering
\subfigure[$\frac{1}{2}\delta_{\frac{1}{2}}+\frac{1}{2}\delta_2$]{\label{fig:fft:a}
\begin{minipage}[c]{0.3\textwidth}
\centering
\begin{tikzpicture}
\draw[-latex] (0,0) -- (3,0);
\draw[-latex] (0,0) -- (0,3);
\node[below,left] at (0,0) {0};
\draw[dashed] (0,2.5) -- (2.5,0);
\draw[very thick] (0,1.25) -- (1.25,1.25) node[minimum width=2.5pt,inner sep=0,draw, fill,circle]{};
\draw[dotted] (1.25,1.25) -- (1.25,0);
\draw[very thick] (1.25,0) node[minimum width=3pt,inner sep=0,draw, fill=white,circle]{} -- (2.5,0) node[minimum width=2.5pt,inner sep=0,draw, fill,circle]{};
\end{tikzpicture}
\end{minipage}
}
\subfigure[$4\smsp \mathbf{1}_{[\frac{3}{8},\frac{1}{2}]}(y)+\mathbf{1}_{[\frac{3}{2},2]}(y)$]{\label{fig:fft:b}
\begin{minipage}[c]{0.3\textwidth}
\centering
\begin{tikzpicture}
\draw[-latex] (0,0) -- (3,0);
\draw[-latex] (0,0) -- (0,3);
\node[below,left] at (0,0) {0};
\draw[dashed] (0,2.5) -- (2.5,0);
\draw[dotted] (1.25,1.25) -- (1.25,0);
\draw[very thick] (0,1.25) -- (1.25,1.25) node[minimum width=2.5pt,inner sep=0,draw, fill,circle]{};
\draw[very thick] (1.25,1.25) -- (1.5625,0) -- (2.5,0) node[minimum width=2.5pt,inner sep=0,draw, fill,circle]{};
\end{tikzpicture}
\end{minipage}
}
\subfigure[$\mathbf{1}_{[0,\frac{1}{2}]}(y)+\mathbf{1}_{[\frac{3}{2},2]}(y)$]{\label{fig:fft:c}
\begin{minipage}[c]{0.3\textwidth}
\centering
\begin{tikzpicture}
\draw[-latex] (0,0) -- (3,0);
\draw[-latex] (0,0) -- (0,3);
\node[below,left] at (0,0) {0};
\draw[dashed] (0,2.5) -- (2.5,0);
\draw[dotted] (1.25,1.25) -- (1.25,0);
\draw[very thick] (0,1.25) -- (1.25,1.25) node[minimum width=2.5pt,inner sep=0,draw, fill,circle]{};
\draw[very thick] (1.25,1.25) -- (2.5,0) node[minimum width=2.5pt,inner sep=0,draw, fill,circle]{};
\end{tikzpicture}
\end{minipage}
}
\caption{Graphs of $F_t(1-u)$ (solid) and $1-u$ (dashed)}
\label{fig:cdf of different density functions}
\end{figure}

\begin{example}
\label{ex:twoTypeDensity}
Let $r=c=1$ and let $Y^i_{t}=Y^{i}_{0}$, $0\leq t< T$  be i.i.d.\ processes such that the law of $Y^i_t$ has the density $f_{t}(y)=4\smsp \mathbf{1}_{[\frac{3}{8},\frac{1}{2}]}(y)+\mathbf{1}_{[\frac{3}{2},2]}(y)$ for all $0\leq t<T$, and let $Y^i_{t}=2+X^{i},$ $t\geq T$, where $X^{i}$ are i.i.d.\ with a continuous distribution on $[0,1]$. Then the simulation of $\rho^{m}_{n}(t)$, cf.\ Figure~\ref{fig:fft:I}, shows that $\rho^m_n(t)$ again converges to $\frac{1}{2}\delta_{\frac{1}{2}}+\frac{1}{2}\delta_{1}$ for $0\leq t<T$ which is again a mixture of the deterministic mean field equilibria $\rho^{m}(t)\equiv \frac{1}{2}$ and $\rho^{mrt}(t)\equiv 1$.
\end{example}

In the third example, the mean field game admits a continuum of solutions; see also Figure~\ref{fig:cdf of different density functions}(c).

\begin{example}
\label{ex:twoTypeDensityInterpol}
Consider the setting of Example~\ref{ex:twoTypeDensity} with density $f_{t}(y)=\mathbf{1}_{[0,\frac{1}{2}]}(y)+\mathbf{1}_{[\frac{3}{2},2]}(y)$. In this case, we again have $\rho^{m}(t)\equiv \frac{1}{2}$ and $\rho^{mrt}(t)\equiv 1$, but now all values in between also correspond to mean field equilibria. The simulation of $\rho^{m}_{n}(t)$, cf.~Figure \ref{fig:fft:II}, illustrates that the law of $\rho^m_n(t)$ converges to a mixture of all these equilibria.
\end{example}

\begin{figure}[thb]
  \centering
\subfigure[density $4\smsp \mathbf{1}_{[\frac{3}{8},\frac{1}{2}]}(y)+\mathbf{1}_{[\frac{3}{2},2]}(y)$]{\label{fig:fft:I}
\begin{minipage}[c]{0.4\textwidth}
\centering
\includegraphics[width=2.2in,height=1.7in]{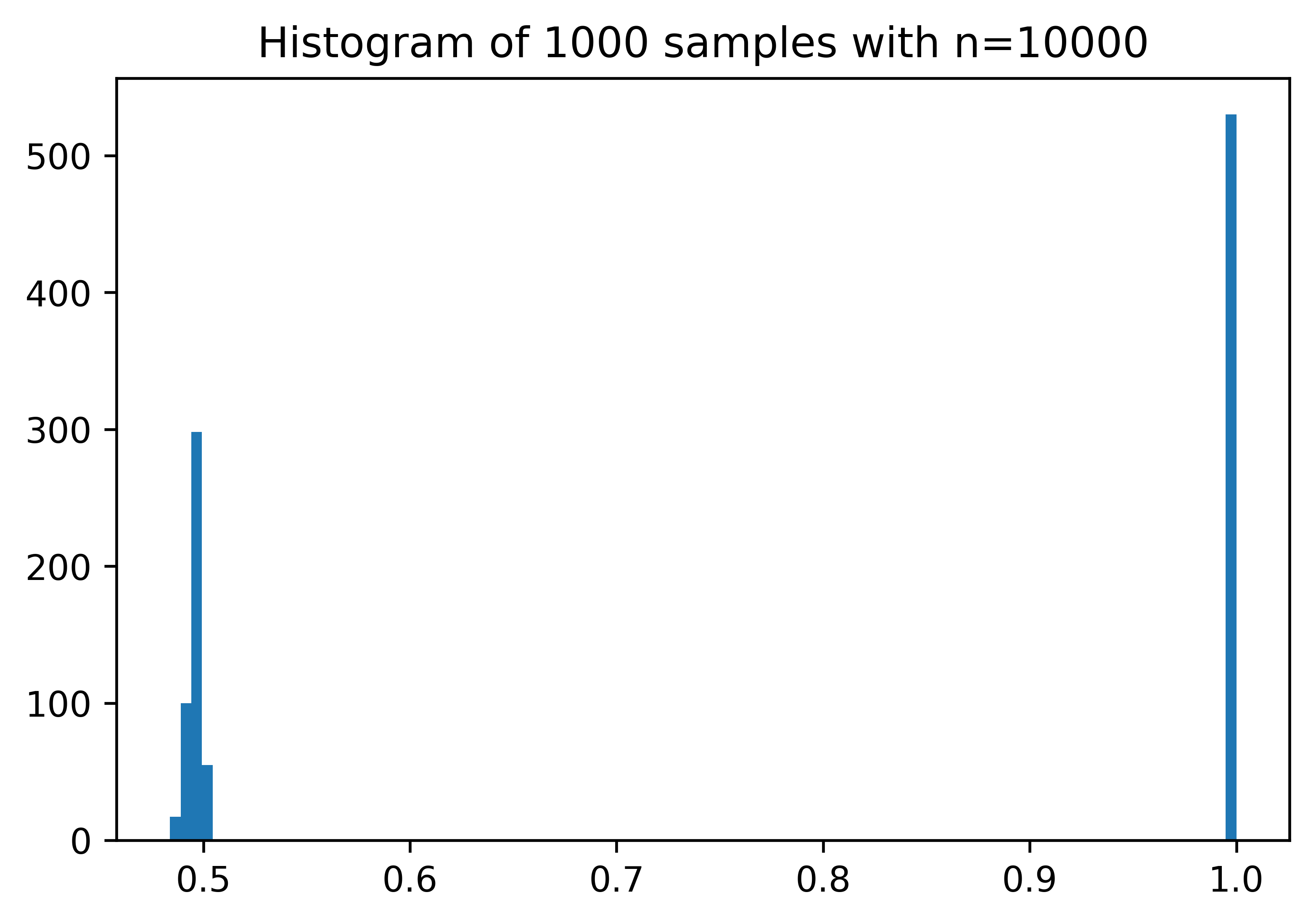}
\end{minipage}
}
\subfigure[density $\mathbf{1}_{[0,\frac{1}{2}]}(y)+\mathbf{1}_{[\frac{3}{2},2]}(y)$]{\label{fig:fft:II}
\begin{minipage}[c]{0.5\textwidth}
\centering
\includegraphics[width=2.2in,height=1.7in]{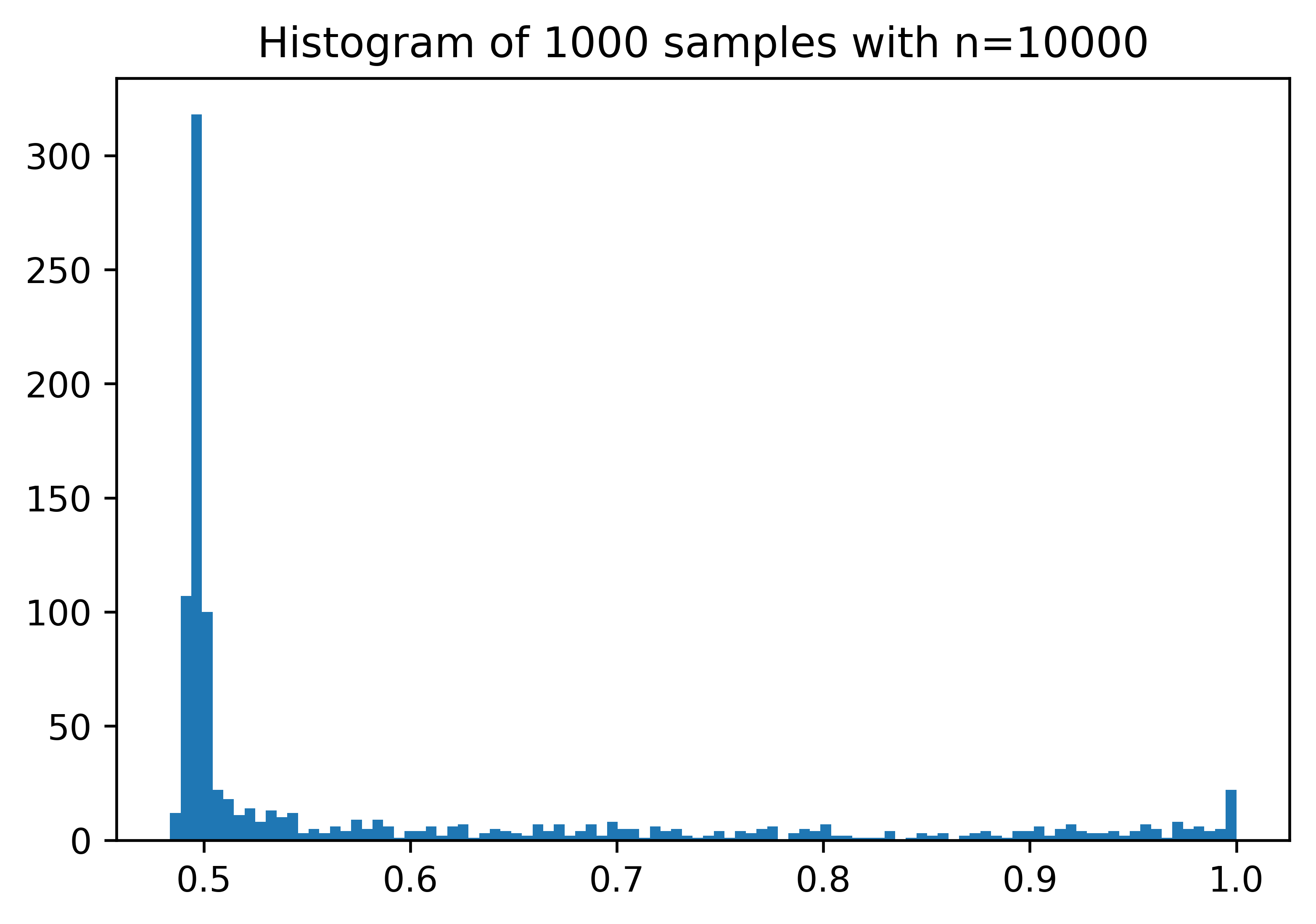}
\end{minipage}
}
\caption{Simulations for $n$-player minimal equilibria ($n=10'000$). Locations $k/n$ of equilibria with $k$ stopped players on the $x$-axis, number of samples with that equilibrium on the $y$-axis.}
\label{fig:simulations for $n$-player game minimal equilibrium}
\end{figure}

When the minimal mean field equilibrium is not increasing-transversal, the preceding examples illustrate that it need not be the limit of the minimal $n$-player equilibria. The final example shows that both cases are possible: it may be the limit even if it is not increasing-transversal.

\begin{example}
\label{ex:twoTypeDensityGood}
Consider the setting of Example~\ref{ex:twoTypeDensity} with density $f_{t}(y)=2\,\mathbf{1}_{[1/2,1]}(y)$. In this case, we easily compute that $\rho^{m}(t)\equiv 0$ and $\rho^{mrt}(t)\equiv 1$. Nevertheless, $\rho^{m}_{n}(t)\equiv0$ due to $Y^{i}_{t}<r$ a.s., and thus $\rho^{m}_{n}(t)\to\rho^{m}(t)$.
\end{example}

\section{Convergence to General Equilibria}\label{se:convergenceGeneral}

Theorem~\ref{th:convUnderH} shows that if the minimal and maximal mean field equilibria are increasing-transversal (on a dense set), then they are the limits of the minimal and maximal $n$-player equilibria. Indeed, the latter are obvious candidates for sequences converging to these mean field equilibria. For mean field equilibria that are not extremal, there are no obvious candidates for the approximating $n$-player equilibria. The following result shows that increasing-transversal equilibria are still limits; however, the approximating $n$-player equilibria have no simple description. We will see in Section~\ref{se:decrasingTransversal} that the analogue for decreasing-transversal solutions fails. 

\subsection{Increasing-Transversal Equilibria}

\begin{theorem}\label{th:convToIncreasing}
  Let $\rho$ be a mean field equilibrium. Suppose that for all $t$ in a dense subset $D\subseteq \R_{+}$, the solution $u:=\rho(t)$ of $u+F_t(r-cu)=1$ is increasing-transversal. Then there exist $n$-player equilibria $(\rho_{n})_{n\geq1}$ which Fatou converge in probability to $\rho$ as $n\to\infty$.
\end{theorem}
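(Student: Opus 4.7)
Fix a countable dense subset $D_0\subseteq D$ of continuity points of~$\rho$, possible because~$\rho$ is right-continuous and hence has at most countably many jumps. By a standard diagonalization, it suffices to show that for every finite $F=\{s_1<\cdots<s_m\}\subseteq D_0$ and every $\eps>0$, for all sufficiently large~$n$ there exists an $n$-player equilibrium $\rho_n$ satisfying $P(\max_{j\le m}|\rho_n(s_j)-\rho(s_j)|<\eps)>1-\eps$. A key preliminary is to reformulate~\eqref{eq:NplayerEquilibCond} through the order statistics: letting $Y_t^{[k]}$ denote the $k$-th largest element of $\{Y_t^i\}_{i=1}^n$ and $Z_k(t):=Y_t^{[k]}+c(k-1)/n$, a direct computation shows that~\eqref{eq:NplayerEquilibCond} at~$(k,t)$ is equivalent to $Z_k(t)\geq r>Z_{k+1}(t)$. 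Two structural features will drive everything: (a) $t\mapsto Z_k(t)$ is non-decreasing for each fixed~$k$, because every $Y_t^i$ is increasing in~$t$; (b) by Glivenko--Cantelli, the empirical $\hat G_t^n(u):=u+\hat F_t(r-cu)$ converges uniformly in~$u$ to $G_t(u):=u+F_t(r-cu)$ almost surely.

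For the pointwise existence of valid indices at a single $t\in F$, I would set $u^*=\rho(t)$ and use increasing-transversality to find $u_-<u^*<u_+$ with $|u_\pm-u^*|<\eps$ and $G_t(u_-)<1<G_t(u_+)$. Feature (b) and a direct empirical-quantile calculation then yield $Z_{k_-}(t)>r$ for some $k_-=\lfloor nu_-\rfloor+O(1)$ and $Z_{k_+}(t)<r$ for some $k_+=\lfloor nu_+\rfloor+O(1)$, with probability tending to~$1$. Because $k_-\leq k_+$ and the sequence $(Z_k(t))_k$ transitions from $\ge r$ to $<r$ in this range, a downcrossing argument produces $k^*\in[k_-,k_+-1]$ with $Z_{k^*}(t)\geq r>Z_{k^*+1}(t)$ and $|k^*/n-u^*|<\eps+O(1/n)$.

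To obtain the joint monotone selection, refine~$F$ so that $\rho(s_{j+1})-\rho(s_j)<\eps/2$ for all~$j$; this is possible since $D_0$ is dense in the continuity points of~$\rho$. Define $k_1$ as a valid index at~$s_1$ with $|k_1/n-\rho(s_1)|<\eps/2$ furnished by the previous paragraph, and recursively $k_j:=\min\{k\geq k_{j-1}:Z_k(s_j)\geq r>Z_{k+1}(s_j)\}$. Feature~(a) gives $Z_{k_{j-1}}(s_j)\geq Z_{k_{j-1}}(s_{j-1})\geq r$, so the same downcrossing argument---with $k_{j-1}$ in place of~$k_-$ and a transversality-based $k_+\approx n(\rho(s_j)+\eps/2)$---produces such $k_j\in[k_{j-1},k_+]$ with probability tending to~$1$. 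The refinement and the inductive bound $k_{j-1}\geq n(\rho(s_{j-1})-\eps/2)\geq n(\rho(s_j)-\eps)$ then give $|k_j/n-\rho(s_j)|<\eps$.

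Finally, I would assemble $\rho_n$ by cut-and-paste. Remark~\ref{rk:NplayerEquilibCondSuff}(ii) produces an equilibrium $\varrho^{(1)}$ with $\varrho^{(1)}(s_1)=k_1/n$ on the good event, and Remark~\ref{rk:minimalFromGivenEquilibrium} yields its minimal extension $\tilde\varrho^{(1)}$ after~$s_1$. The crucial inequality $\tilde\varrho^{(1)}(s_2)\leq k_2/n$ is proved as follows: the set $K_2^*:=\{i:Y_{s_2}^i\geq r-c(k_2-1)/n\}$ has exactly $k_2$ elements by~\eqref{eq:NplayerEquilibCond}; since $Y_t^i$ is increasing in~$t$ and $k_1\leq k_2$, the agents already stopped by~$s_1$ satisfy $Y_{s_2}^i\geq Y_{\tau^i}^i\geq r-c(k_1-1)/n\geq r-c(k_2-1)/n$ and therefore lie in~$K_2^*$; and any further agent $i^*\notin K_2^*$ that the minimal extension were to force before~$s_2$ would satisfy $Y_{s_2}^{i^*}<r-ck_2/n$, hence would require a count at least $k_2+2$ at its stopping time, which is impossible for the chronologically first such $i^*$ because only the $k_2$ agents of $K_2^*$ could have been stopped by then. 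Remark~\ref{rk:cutAndPaste}(i) then pastes $\tilde\varrho^{(1)}$ at~$s_2$ with an equilibrium attaining $k_2/n$ at~$s_2$, and iterating over~$j$ yields~$\rho_n$ with $\rho_n(s_j)=k_j/n$. The main obstacle throughout is this monotone joint selection: the sequence $(Z_k(t))_k$ may exhibit several downcrossings of~$r$ corresponding to distinct $n$-player equilibria (in particular those approximating higher mean field equilibria of Section~\ref{se:MFG}), and only the monotonicity~(a) of $Z_k$ in~$t$, combined with a fine enough refinement of~$F$, keeps the greedy selection on the branch tracking~$\rho$ rather than a spurious one.
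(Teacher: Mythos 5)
Your static step and your final cut-and-paste assembly are essentially sound and parallel the paper's argument: the downcrossing of $(Z_k(t))_k$ is the same combinatorial fact as the monotone fixed-point argument of Lemmas~\ref{le:convToIncreasingStatic} and~\ref{le:fixedpoint} (note that $Z_k(t)\geq r$ is exactly $\#\{Y_t^i+c\tfrac{k-1}{n}\geq r\}\geq k$ and $Z_{k+1}(t)<r$ is exactly $\#\{Y_t^i+c\tfrac{k}{n}\geq r\}\leq k$), and your pasting argument re-derives what Remarks~\ref{rk:minimalFromGivenEquilibrium}, \ref{rk:NplayerEquilibCondSuff} and~\ref{rk:cutAndPaste} already provide.

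The gap is in the joint monotone selection, precisely the step you flag as the main obstacle. The recursion $k_j=\min\{k\geq k_{j-1}: Z_k(s_j)\geq r>Z_{k+1}(s_j)\}$ yields only the one-sided bound $k_j\geq k_{j-1}$, and the inductive hypothesis you invoke, $k_{j-1}\geq n(\rho(s_{j-1})-\eps/2)$, does not propagate: from it you derive only $k_j\geq n(\rho(s_j)-\eps)$, which is strictly weaker than what the next step requires, so after $j$ steps the usable lower bound degenerates to $k_j\geq k_1\geq n(\rho(s_1)-\eps/2)$. This is not mere bookkeeping. Suppose $\rho$ increases strictly from $0.3$ to $0.5$ on $[s_1,s_m]$ while $u=0.3$ remains an increasing-transversal solution of $u+F_t(r-cu)=1$ throughout (a lower branch coexisting with the branch that $\rho$ follows, as in Figure~\ref{fig:intro}). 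Then with high probability there is a downcrossing near $0.3n$ at every $s_j$, the greedy minimum stays locked at $k_j\approx 0.3n$, and refining $F$ changes nothing because the algorithm never leaves the lower branch; monotonicity of $Z_k$ in $t$ pushes $k_j$ upward only if the lower branch disappears, which it need not. The paper avoids this by making the selection two-sided: the static search is run \emph{inside} a window $\{[nu_0],\dots,[nu_1]\}$ with $u_0,u_1$ within $\delta/2$ of $\rho(s_j)$, using both transversality inequalities to show the counting map sends this window into itself, so closeness to $\rho(s_j)$ from below is built into the construction. Monotonicity of the selected indices is then secured separately: either by taking $\delta$ smaller than half the increment $\rho(s_{j+1})-\rho(s_j)$ when this is positive, or, when $\rho(s_j)=\rho(s_{j+1})$, by running the fixed-point construction with the same window at both times and using that the map increases in $t$, so the minimal fixed points within the window are ordered. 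Some version of this two-sided localization is needed; a selection anchored only at $k_{j-1}$ cannot supply the lower bound.
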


The first step of the proof is to solve a static version of the problem. This will be accomplished by a fixed point argument for monotone functions.

\begin{lemma}\label{le:convToIncreasingStatic}
  Let $t\geq0$, let $u\in[0,1]$ be an increasing-transversal solution of $u+F_t(r-cu)=1$ and let $\eps,\delta>0$. There are $n_{0}\in\N$ and $A\in\cG_{t}$ with $P(A)>1-\eps$ such that for all $n\geq n_{0}$ and $\omega\in A$, there exists $k(\omega)\in\N$ such that $|u-k(\omega)/n|\leq \delta$ and~\eqref{eq:NplayerEquilibCond} holds; i.e.,
  \begin{equation*}
    \#\{Y_t^i(\omega)+c\smsp \frac{k(\omega)-1}{n}\geq r\}=k(\omega) \quad\!\mbox{and}\!\quad
    \#\{Y_t^i(\omega)+c\smsp \frac{k(\omega)}{n}< r\}=n-k(\omega).
  \end{equation*}
  Moreover, $k(\omega)$ can be chosen as a measurable function of $Y^{1}_{t}(\omega),\dots, Y^{n}_{t}(\omega)$.
\end{lemma}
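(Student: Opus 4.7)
The plan is to rephrase condition~\eqref{eq:NplayerEquilibCond} in terms of the empirical counting function $\psi_n(j) := \#\{i : Y_t^i \geq r - cj/n\}$ and its deterministic limit $G(v) := 1 - F_t(r - cv)$. A direct computation shows that \eqref{eq:NplayerEquilibCond} for a given integer $k$ is equivalent to the pair of equalities $\psi_n(k-1) = k = \psi_n(k)$. I will locate such a $k$ via a monotone lattice-path argument: since $\psi_n$ is integer-valued and non-decreasing in $j$, the difference $\psi_n(j) - j$ can decrease only by exactly $1$ between consecutive $j$, and otherwise stays the same or increases.

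Increasing-transversality of $u$ provides $\eta > 0$ and $v_1 \in (u - \delta/2, u)$, $v_2 \in (u, u + \delta/2)$ (shrinking $\delta$ if needed so that $v_1, v_2 \in [0,1]$) satisfying $G(v_1) > v_1 + \eta$ and $G(v_2) < v_2 - \eta$. By the Glivenko--Cantelli theorem applied to the i.i.d.\ sequence $(Y_t^i)_i$, whose c.d.f.\ $F_t$ is continuous by the standing assumption, the empirical quantity $\varphi_n(v) := \#\{i : Y_t^i \geq r - cv\}/n$ satisfies $\sup_v |\varphi_n(v) - G(v)| \to 0$ almost surely. I will pick $n_0$ and a set $A \in \cG_t$ with $P(A) > 1 - \eps$ on which this supremum is less than $\eta/2$ for all $n \geq n_0$. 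Setting $k_1 := \lceil n v_1 \rceil$ and $k_2 := \lfloor n v_2 \rfloor$, it follows that $\psi_n(k_1) > k_1$ and $\psi_n(k_2) < k_2$ on $A$ for $n$ sufficiently large.

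Then define
$$
  k(\omega) := \min\{j \in [k_1, k_2] : \psi_n(j)(\omega) \leq j\},
$$
which is nonempty by the above and depends measurably on $(Y_t^1, \ldots, Y_t^n)$. By minimality, $\psi_n(k-1) - (k-1) \geq 1$, while $\psi_n(k) - k \leq 0$. Because the decrement from $j = k-1$ to $j = k$ is at most $1$, both bounds must be tight, giving $\psi_n(k-1) = k$ and $\psi_n(k) = \psi_n(k-1) = k$, which is exactly~\eqref{eq:NplayerEquilibCond}. Finally, $k/n \in [k_1/n, k_2/n] \subseteq (u - \delta, u + \delta)$ for $n$ large, so $|u - k(\omega)/n| \leq \delta$ as required.

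Rather than a substantial obstacle, the main subtlety is the integer lattice-path step that pins down both $\psi_n(k-1)$ and $\psi_n(k)$ simultaneously from the sign change of $\psi_n(j) - j$; the rest is a routine application of Glivenko--Cantelli combined with the transversality hypothesis and continuity of $F_t$.
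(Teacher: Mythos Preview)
Your proof is correct and follows essentially the same approach as the paper: both use increasing-transversality to pick points $v_1<u<v_2$ where $G(v)-v$ has opposite signs, invoke Glivenko--Cantelli to transfer this to the empirical counts, and then use a monotone/integer argument to produce $k$ with $\psi_n(k-1)=\psi_n(k)=k$. The only cosmetic difference is that the paper packages the last step as a separate fixed-point lemma for monotone self-maps of a finite ordered set, whereas you argue it inline via the observation that $\psi_n(j)-j$ can decrease by at most~$1$ between consecutive integers; these are two phrasings of the same idea.
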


\begin{proof}
  Since $u$ is increasing-transversal, there are points $u_{0},u_{1}\in\R$ such that $u-\delta/2 \leq u_{0} < u < u_{1} \leq u+ \delta/2$ and 
  $$
    u_{0}< 1 - F_{t}(r-cu_{0}) \leq 1 - F_{t}(r-cu_{1}) < u_{1},
  $$
  where the inequality in the middle is due to the monotonicity of $F_{t}$.
  The Glivenko--Cantelli theorem then implies that the event $A_{n}$ consisting of all~$\omega$ such that
  $$
    [nu_{0}]\leq \#\{Y_t^i(\omega)+c\smsp \frac{[nu_{0}]-1}{n}\geq r\} \leq \#\{Y_t^i(\omega)+c\smsp \frac{[nu_{1}]}{n}\geq r\} \leq [nu_{1}]
  $$
  satisfies $P(A_{n})\to 1$. For fixed $n$ and $\omega\in A_{n}$, consider the integer-valued function
  $$
    k\mapsto G(k):=\#\{Y_t^i(\omega)+c\smsp \tfrac{k}{n}\geq r\}.
  $$
  By the above, $G$ maps $\{[nu_{0}]-1,[nu_{0}],\dots,[nu_{1}]\}$ into $\{[nu_{0}],\dots,[nu_{1}]\}$. Moreover, $G$ is monotone increasing. Lemma~\ref{le:fixedpoint} below then yields the existence of $[nu_{0}]\leq k \leq [nu_{1}]$ such that $G(k-1)=G(k)=k$ which is exactly~\eqref{eq:NplayerEquilibCond}. By the choice of $u_{0},u_{1}$ we also have $|u-k/n|\leq \delta$ for $n$ large. Moreover, it is clear from the proof of Lemma~\ref{le:fixedpoint} that $k$ is a measurable function of $Y^{1}_{t},\dots, Y^{n}_{t}$.
\end{proof}

\begin{lemma}\label{le:fixedpoint}
  Let $x_{0}<x_{1}<\dots < x_{N}$ be real numbers for some $N\geq 1$. Let $J=\{x_{1},\dots,x_{N}\}$ and $J_{0}=\{x_{0}\}\cup J$. If $f: J_{0}\to J$ is monotone increasing, there exists $k\in\{1,\dots,N\}$ such that $f(x_{k-1})=f(x_{k})=x_{k}$.
\end{lemma}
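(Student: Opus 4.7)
The plan is to reformulate the statement in terms of indices and then use a minimality (discrete intermediate-value) argument. For $i \in \{0, 1, \dots, N\}$, let $g(i)$ be the unique element of $\{1, \dots, N\}$ such that $f(x_i) = x_{g(i)}$; the existence and uniqueness come from $f(J_0) \subseteq J$ and the listing $J = \{x_1, \dots, x_N\}$. Monotonicity of $f$ translates directly into monotonicity of $g$, since $x_0 < x_1 < \dots < x_N$. The conclusion we want, $f(x_{k-1}) = f(x_k) = x_k$, is then equivalent to $g(k-1) = g(k) = k$.

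The key step is to take
\[
  k := \min\{i \in \{1,\dots,N\} : g(i) \leq i\}.
\]
This set is non-empty because $g(N) \in \{1,\dots,N\}$, so $g(N) \leq N$. By the minimality of $k$, we have $g(i) > i$ for every $1 \leq i < k$. Combined with the unconditional bound $g(0) \geq 1$ (since the range of $g$ is $\{1,\dots,N\}$), this yields $g(k-1) \geq k$ whether $k = 1$ or $k \geq 2$.

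Finally, monotonicity of $g$ gives $g(k-1) \leq g(k)$, and by the defining property of $k$ we have $g(k) \leq k$. Chaining these inequalities produces $k \leq g(k-1) \leq g(k) \leq k$, so $g(k-1) = g(k) = k$, which is the desired conclusion.

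There is no substantive obstacle here; the only mild subtlety is confirming that the inequality $g(k-1) \geq k$ holds also in the edge case $k=1$, where "$g(i) > i$ for $i<k$" is vacuous but the range constraint $g(0) \geq 1$ compensates. The whole argument is essentially the discrete analogue of the intermediate value theorem applied to $i \mapsto g(i) - i$.
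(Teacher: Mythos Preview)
Your proof is correct and follows essentially the same minimality argument as the paper. The paper defines $k$ as the smallest index with $f(x_k)=x_k$ and then runs a short induction to get $f(x_{k-1})\ge x_k$, whereas your characterization of $k$ as the smallest index with $g(k)\le k$ yields $g(k-1)\ge k$ directly from minimality; the two choices of $k$ in fact coincide, so this is only a presentational streamlining.
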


\begin{proof}
  Since $f$ is monotone and maps $J$ into $J$, it must have a fixed point in~$J$. We claim that the minimal $k\in\{1,\dots,N\}$ such that $f(x_{k})=x_{k}$ has the desired property. Indeed, if $k=1$, monotonicity implies that $f(x_{0})=f(x_{1})$ and the proof is complete. If $k>1$, we observe that $f(x_{l-1})\geq x_{l}$ for all $1\leq l\leq k$. Indeed, $f(x_{1})\geq x_{2}$ since $x_{1}$ is not a fixed point, but then $f(x_{2})\geq x_{3}$ since $x_{2}$ is not a fixed point and $f$ is monotone, and so on. In particular, $f(x_{k-1})\geq x_{k}$ and thus $f(x_{k-1})=f(x_{k})=x_{k}$.
\end{proof}

\begin{proof}[Proof of Theorem~\ref{th:convToIncreasing}]
  Fix $N\in\N$ and let $t_{1}<\dots<t_{N}$ be in $D$. For $n$ large enough, Lemma~\ref{le:convToIncreasingStatic} allows us to find sets $A_{l}\in\cG_{t_{l}}$ with $P(A_{l})>1-N^{-2}$ and random variables $k_{l}$ satisfying $|\rho(t_{l})-k_{l}/n|\leq \delta:=1/N$ and~\eqref{eq:NplayerEquilibCond} on~$A_{l}$, for $1\leq l \leq N$.
  
  Following Remark~\ref{rk:NplayerEquilibCondSuff}, we can construct $n$-player equilibria $\rho^{l}_{n}$ such that $\rho^{l}_{n}(t_{l})=k_{l}/n$ on $A_{l}$. Next, we argue that these $\rho^{l}_{n}$ can be chosen such that 
  \begin{equation}\label{eq:FPordered}
    \rho^{1}_{n}(t_{1})\leq \cdots  \leq \rho^{m}_{n}(t_{m}) \mbox{ on } A_{1}\cap\cdots\cap A_{m},\quad 1\leq m\leq N.
  \end{equation}  
  Indeed, we have $\rho(t_{l})\leq\rho(t_{l+1})$ by the increase of $\rho$. If $\rho(t_{l})<\rho(t_{l+1})$, then we can ensure $\rho^{l}_{n}(t_{l})\leq \rho^{l+1}_{n}(t_{l+1})$ on $A_{l}\cap A_{l+1}$ simply by choosing $\delta<|\rho(t_{l})-\rho(t_{l+1})|/2$ in Lemma~\ref{le:convToIncreasingStatic}. If $\rho(t_{l})=\rho(t_{l+1})$, we can observe that if the construction in the proof of Lemma~\ref{le:convToIncreasingStatic} is executed twice with $t_{l}$ and $t_{l+1}$, then by choosing the same parameters $u_{0},u_{1}$ the corresponding functions $f_{l}$ and $f_{l+1}$ satisfy $f_{l}\leq f_{l+1}$ due to the increase of $Y^{i}$. This implies that the corresponding minimal fixed points produced by the proof of Lemma~\ref{le:fixedpoint} satisfy $\rho^{l}_{n}(t_{l})\leq \rho^{l+1}_{n}(t_{l+1})$.
  
  In view of~\eqref{eq:FPordered}, we can use Remark~\ref{rk:cutAndPaste}(iii) to construct from the equilibria $(\rho^{l}_{n})_{1\leq l\leq N}$ another $n$-player equilibrium $\varrho_{n}$ with the property that $\varrho_{n}(t_{l})= \rho^{l}_{n}(t_{l})$ for all $1\leq l \leq N$ on $A^{N}:=\cap_{l=1}^{N}A_{l}$.
  
  To summarize, $\varrho_{n}$ satisfies $|\rho(t_{l})-\varrho_{n}(t_{l})|\leq 1/N$ for all $1\leq l \leq N$ on the set $A^{N}$ which has probability $P(A^{N})\geq 1-N^{-1}$. By letting $t_{1},\dots,t_{N}$ exhaust a countable dense subset $D'\subseteq D\subseteq \R_{+}$ as $N\to\infty$, this shows that there exist $n$-player equilibria  $(\varrho_{n})_{n\geq1}$ such that $\varrho_{n}(t)\to\rho(t)$ in probability for all $t\in D'$ and the proof is complete.
\end{proof}

\begin{remark}\label{rk:convergenceToMixtures}
  The construction leading to Theorem~\ref{th:convToIncreasing} is pathwise and thus extends beyond deterministic mean field equilibria. For instance, let $\rho^{1},\rho^{2}$ be such equilibria satisfying the assumption of Theorem~\ref{th:convToIncreasing}, let $\lambda\in[0,1]$ and suppose that the $n$-player game admits a set $A\in\cG_{0}$ with $P(A)=\lambda$. Then we can apply the construction separately on $A$ and $A^{c}$ to find $n$-player equilibria $\rho_{n}$ converging to the mixture $\lambda \delta_{\rho^{1}} + (1-\lambda)\delta_{\rho^{2}}$ on a dense set. In the same vain, convergence to more general mixtures could be analyzed.
\end{remark}

\subsection{Decreasing-Transversal Equilibria}
\label{se:decrasingTransversal}

Let us begin with a simulation and then establish that the observations correspond to a general result.

\begin{figure}[bh]
  \centering
\begin{tikzpicture}[scale=0.95]
\draw[-latex] (0,0) -- (4,0);
\draw[-latex] (0,0) -- (0,4);
\node[below] at (0,0) {\small 0};
\node[below] at (3,0) {\small 1};
\node[left] at (0,3) {\small 1};
\node[below] at (1.5,0) {\small $0.5$};
\draw[dashed] (0,3) -- (3,0);
\draw[very thick,domain=0:1.5] plot(\x,{3*(1-2*((\x)/3)^2)]});
\draw[very thick,domain=1.5:3] plot(\x,{6*(1-(\x)/3)^2});
\draw[dotted] (1.5,0) -- (1.5,1.5);
\fill (0,3) circle (2pt);
\fill (3,0) circle (2pt);
\fill (1.5,1.5) circle (2pt);
\end{tikzpicture}
\hspace{.5em}
\includegraphics[scale=.55]{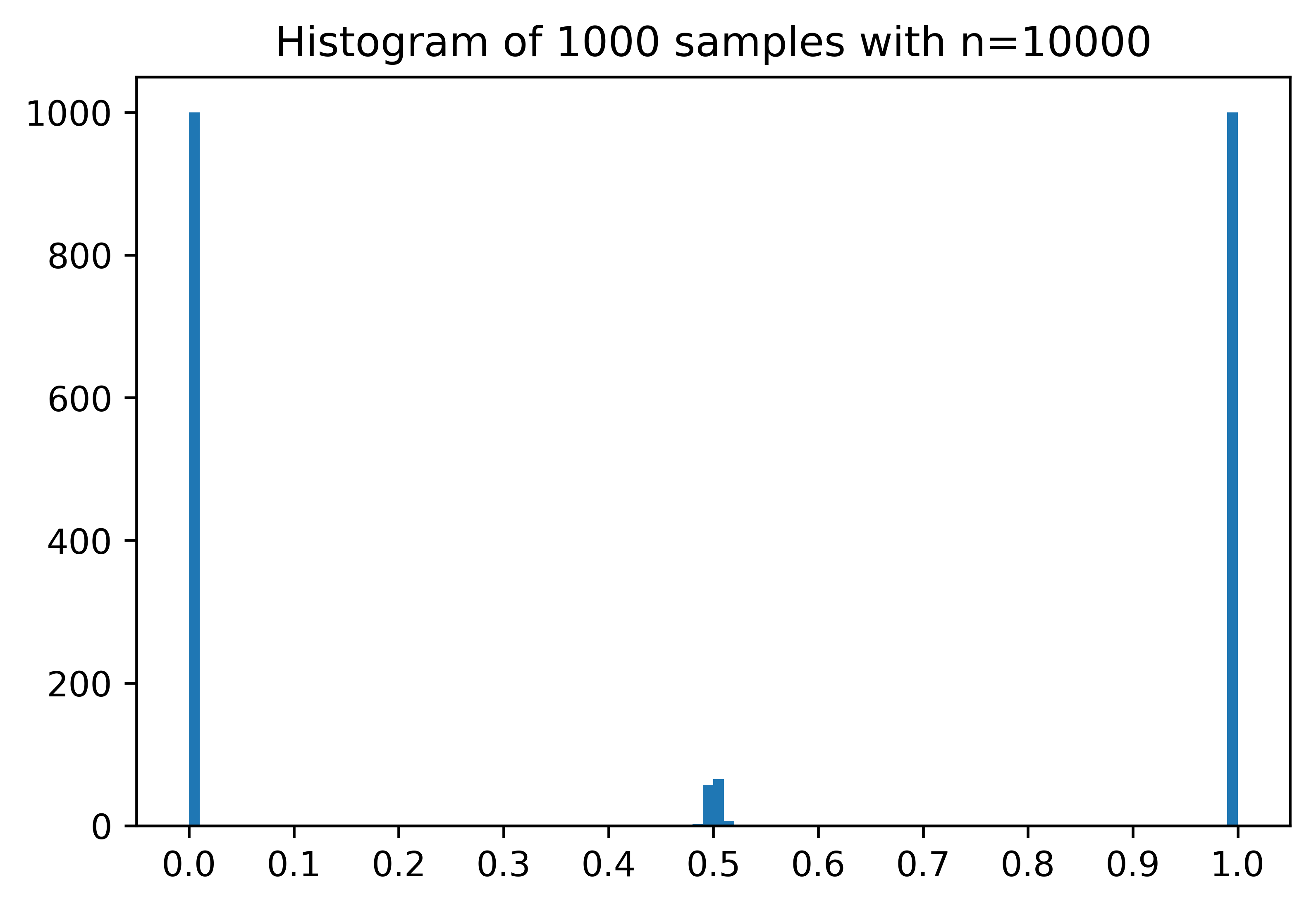}
\caption{C.d.f.\ and simulation of Example~\ref{ex:tent}. The decreasing-transversal equilibrium at $0.5$ can only be approximated on  12.5\% of the samples.}
\label{fig:tent}
\end{figure}

\begin{example}
\label{ex:tent}
Let $r=c=1$ and let $Y^i_{t}=Y^i_{0}$, $0\leq t< T$ be constant i.i.d.\ processes such that $\Law(Y^i_t)$ has the tent-shaped probability density $f(x)=2-4|x-1/2|$, $x\in[0,1]$. As illustrated in Figure~\ref{fig:tent} (left panel), the corresponding equation~\eqref{eq:master} has a decreasing-transversal solution at $u=1/2$ and increasing-transversal solutions at $u=0$ and $u=1$. For the game with $n=10'000$ players,  the histogram in Figure~\ref{fig:tent} shows the values of $k/n$ such that $k$ satisfies the equilibrium conditions~\eqref{eq:NplayerEquilibCond}. The simulation illustrates the convergence to the equilibria at $u=0,1$ as proved in Theorem~\ref{th:convToIncreasing} but also suggests that $u=1/2$ is not a limit of $n$-player equilibria; indeed, only about  12.5\% of the samples
allow for an $n$-player equilibrium with $k/n$ close to~$1/2$. In Proposition~\ref{pr:expectedNumber}, we will establish an asymptotic  upper bound which yields $e^{-2}\approx 13.5\%$ in this example.
\end{example}


In the remainder of this section we assume that $F_{t}$ admits a continuous density~$f_{t}$. Let $x\in[0,1]$ be a solution of $u+F_{t}(r-cu)=1$. We say that $x$ is \emph{strongly decreasing-transversal} if $\partial_{u}|_{u=x}[u+F_{t}(r-cu)]<0$ or equivalently
  $$
    f_{t}(r-cx)>c^{-1}.
  $$
  We note that $x$ is then necessarily in $(0,1)$ and decreasing-transversal in the sense of Definition~\ref{de:transversalDef}; the only difference (given the continuity assumption) is that we exclude the case where $u+F_{t}(r-cu)$ has a vanishing derivative at $x$ (see also Remark~\ref{rk:tangentialDecreasingCase}). Intuitively, when $f_{t}(r-cx)$ is large, there are many similar agents (in terms of values of $Y^{i}$ and relative to the interaction constant $c$) close to such a state. As a result, these agents may tend to coordinate and either all stop or all not stop: it may be impossible to break up the group\footnote{Clearly, this intuition does not explain the phase-transition character of the phenomenon. To gather the intuition for a large density, it may be useful to consider the limiting case of an atom in~$F_{t}$: all agents corresponding to the atom make the same stopping decision.} and create an $n$-player equilibrium close to $x$. 

\begin{theorem}\label{th:convToDecreasing}
Let $\rho$ be a mean field equilibrium and suppose that the set
$$
\{t\geq0:\, \rho(t) \mbox{ is strongly decreasing-transversal}\}
$$
has nonempty interior.\footnote{Note that the condition is nonempty interior rather than the set being nonempty. This corresponds to the fact that convergence in probability on a dense set of times~$t$ is sufficient for Fatou convergence; cf.\ Definition~\ref{de:Fatou}.} Then there does not exist a sequence of $n$-player equilibria $\rho_{n}$ Fatou converging to $\rho$ in probability.
\end{theorem}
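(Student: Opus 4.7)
I will argue by contradiction at a single carefully chosen time. Assume there is a sequence $(\rho_n)$ of $n$-player equilibria Fatou converging in probability to $\rho$. By Definition~\ref{de:Fatou}, there is a dense set $D\subseteq\R_+$ with $\rho_n(t)\to\rho(t)$ in probability for every $t\in D$. Because the strongly decreasing-transversal set has nonempty interior, $D$ meets this interior; pick such a $t$ and write $x:=\rho(t)$ and $\alpha_*:=c f_t(r-cx)>1$. The strict inequality $\partial_u[u+F_t(r-cu)]|_{u=x}=1-\alpha_*<0$ combined with continuity forces $x$ to be an isolated element of $\cU(t)$, so we may choose $\delta>0$ with $\cU(t)\cap[x-\delta,x+\delta]=\{x\}$. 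Combining the assumed convergence $\rho_n(t)\to x$ in probability with the asymptotic concentration Proposition~\ref{pr:convergenceGeneralEquilib} then yields $P(|\rho_n(t)-x|\leq\delta)\to 1$.

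Next I reduce the problem to a counting question. Let
$$
N_n:=\#\big\{k\in\{0,\ldots,n\}:\, |k/n-x|\leq\delta \text{ and $k$ satisfies~\eqref{eq:NplayerEquilibCond} at time } t\big\}.
$$
The value of any $n$-player equilibrium at $t$ must lie in the set of $k/n$ for which~\eqref{eq:NplayerEquilibCond} holds, and by Remark~\ref{rk:NplayerEquilibCondSuff} each such $k$ is realized by some equilibrium, so the event $\{|\rho_n(t)-x|\leq\delta\}$ is contained in $\{N_n\geq 1\}$. It therefore suffices to prove that $P(N_n\geq 1)\not\to 1$.

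The natural tool is the first moment of $N_n$. Because $Y_t^1,\dots,Y_t^n$ are i.i.d., the probability that a given $k$ satisfies~\eqref{eq:NplayerEquilibCond} equals $\binom{n}{k}p_k^kq_k^{n-k}$ with $p_k:=1-F_t(r-c(k-1)/n)$ and $q_k:=F_t(r-ck/n)$. Factor this as $(p_k+q_k)^n$ times the Binomial point mass $\mathrm{Bin}(n,\tilde p_k)(\{k\})$, where $\tilde p_k:=p_k/(p_k+q_k)$. The missing mass $1-(p_k+q_k)=P(Y_t^i\in[r-ck/n,r-c(k-1)/n))$ equals $\alpha_*/n+o(1/n)$ uniformly in the relevant $k$ by continuity of $f_t$, so $(p_k+q_k)^n\to e^{-\alpha_*}$. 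A local central limit expansion around the mode $n\tilde p_k\approx nx+\alpha_*(k-nx)$ gives
$$
\binom{n}{k}\tilde p_k^k(1-\tilde p_k)^{n-k}\approx\frac{1}{\sqrt{2\pi n x(1-x)}}\exp\!\Big(\!-\frac{(1-\alpha_*)^2(k-nx)^2}{2nx(1-x)}\Big),
$$
and summing via a Gaussian integral (with the contributing range $k-nx=O(\sqrt n)$) yields $E[N_n]\to L(\alpha_*):=e^{-\alpha_*}/(\alpha_*-1)$. I would state this as the separate Proposition~\ref{pr:expectedNumber} advertised in Example~\ref{ex:tent}, where $\alpha_*=2$ gives $L(\alpha_*)=e^{-2}\approx 13.5\%$.

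By Markov's inequality, $\limsup_n P(N_n\geq 1)\leq L(\alpha_*)$. When $L(\alpha_*)<1$---which holds as soon as $\alpha_*$ exceeds the unique root on $(1,\infty)$ of $e^{-\alpha}=\alpha-1$, and in particular for the tent-shaped density of Example~\ref{ex:tent}---this contradicts $P(N_n\geq 1)\to 1$. \emph{The main obstacle is the remaining regime $\alpha_*$ just above $1$, where $L(\alpha_*)\geq 1$ and the first moment alone does not give a contradiction.} To handle it I would upgrade the estimate to a Poisson limit $N_n\Rightarrow\mathrm{Poisson}(L(\alpha_*))$ via the Chen--Stein method, exploiting that the events indexed by distinct $k$ constrain the empirical measure on the disjoint intervals $[r-ck/n,r-c(k-1)/n)$ of length $c/n$ and are consequently ``locally almost independent''; a second-moment computation analogous to that of $E[N_n]$ would verify the Chen--Stein side conditions. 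The Poisson limit delivers $P(N_n=0)\to e^{-L(\alpha_*)}>0$, once again incompatible with $P(N_n\geq 1)\to 1$.
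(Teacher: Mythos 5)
Your reduction is sound and matches the paper's: you fix a time $t$ in the dense set $D$ intersected with the (nonempty) interior of the strongly decreasing-transversal set, note that $n\rho_n(t)$ must lie in $\cK_{n,t}=\{k:G_{n,t}(k)=k=G_{n,t}(k-1)\}$, and reduce to showing $P(N_n\ge1)\not\to1$ for $N_n=\#\{k\in\cK_{n,t}:|x-k/n|<\eps\}$. Your first-moment computation is exactly the paper's Proposition~\ref{pr:expectedNumber} (the factorization $\binom{n}{k}F_k^{n-k}(1-F_{k-1})^k=(p_k+q_k)^n\cdot\mathrm{Bin}(n,\tilde p_k)(\{k\})$ with $(p_k+q_k)^n\to e^{-\alpha}$ and a local CLT giving the factor $1/|1-\alpha|$ is a legitimate shortcut for what the paper does via Stirling), and the Markov step is precisely the ``in particular'' of that proposition. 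This settles the theorem whenever $e^{-\alpha}<\alpha-1$, i.e.\ $\alpha$ above the root $\alpha_0\approx1.28$.

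The genuine gap is the regime $1<\alpha\le\alpha_0$, and the Chen--Stein/Poisson upgrade you propose there does not work. The indicators $\1_{\{k\in\cK_{n,t}\}}$ are \emph{not} locally almost independent: for a fixed lag $m=j-k\ge2$, the event $\{j\in\cK_{n,t}\}$ conditionally on $\{k\in\cK_{n,t}\}$ only requires an $O(1)$-probability local configuration of the $m$ sample points falling near $r-ck/n$, so $P(k\in\cK_{n,t},\,j\in\cK_{n,t})$ is of order $n^{-1/2}$ (compare the bound $P(k\in\cK)\,(2\pi m)^{-1/2}e^{am}$ in the proof of Proposition~\ref{pr:limitProbLowerBound}), whereas the product of marginals is of order $n^{-1}$. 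Consequently the Chen--Stein neighbourhood sum $\sum_k\sum_{0<|j-k|\le M}E[\1_{\{k\in\cK_{n,t}\}}\1_{\{j\in\cK_{n,t}\}}]$ is of the same order as $E[N_n]$ rather than $o(1)$, the total-variation distance to Poisson does not vanish, and the limiting point process is clustered rather than Poisson. A sanity check: for the relaxed set $\cK^*_{n,t}=\{k:G_{n,t}(k)=k\}$ with $\alpha=2$, the expected count near $x$ tends to $1$, yet the hitting probability tends to $(1-\theta)/(\alpha-1)\approx0.59\neq1-e^{-1}$. The paper closes the full range $\alpha>1$ by a different device: it observes that $\{\exists\,k\in\cK^*_{n,t}:|x-k/n|<\eps\}$ is exactly a crossing event of an empirical c.d.f.\ with the identity and invokes the Nair--Shepp--Klass boundary-crossing computation to obtain the limit $(1-\theta)/(\alpha-1)$, which is $<1$ for every $\alpha>1$ (Proposition~\ref{pr:NairSheppKlass} and Corollary~\ref{co:decreasingLessOne}). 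To complete your argument you would need either that input or a genuine clustering analysis; note also that positive association is not available off the shelf here, since the events $\{k\in\cK_{n,t}\}$ are not monotone in the $Y^i_t$.
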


This theorem follows from Corollary~\ref{co:decreasingLessOne} below which shows non-existence with positive probability at any fixed time $t$ where $\rho(t)$ is strongly decreasing-transversal. For brevity, we set
$$
  G_{n,t}(k)=\#\{Y_t^i+c\smsp \tfrac{k}{n}\geq r\}
$$
so that the $n$-player equilibrium conditions~\eqref{eq:NplayerEquilibCond} can be expressed concisely as
$G_{n,t}(k)=k=G_{n,t}(k-1)$.
Moreover, we introduce
\begin{align*}
 \cK_{n,t}
 =\{0\leq k\leq n: G_{n,t}(k)=k=G_{n,t}(k-1)\}.
\end{align*}
Roughly speaking, we think of $\cK_{n,t}(\omega)$ as the set of all $k$ such that $k/n=\rho_{n}(t)(\omega)$ for some $n$-player equilibrium $\rho_{n}(t)$. (This is not quite meaningful since equilibria can always be altered on nullsets.) More precisely, we have that if $\rho_{n}$ is a given equilibrium, then $n\rho_{n}(t)\in \cK_{n,t}$ a.s.\ by~(3.1). In particular, we will use below that $\{|x-\rho_{n}(t)|<\eps\}\subseteq \{\exists\, k\in \cK^{}_{n,t}:\, |x-\kn|<\eps\}$~a.s. for all $x\in[0,1]$ and $\eps>0$.
Finally, we also introduce the superset
\begin{align*}
 \cK^{*}_{n,t}
 &=\{0\leq k\leq n: G_{n,t}(k)=k\} \supseteq\cK_{n,t}
\end{align*}
which has no direct interpretation in terms of our game but is conveniently related to crossings of empirical distribution functions (see the proof below).

\begin{proposition}\label{pr:NairSheppKlass}
  Fix $t\geq0$ and let $x\in(0,1)$ satisfy $x+F_{t}(r-cx)=1$. Let $\alpha:=cf_{t}(r-cx)$ and assume that $\alpha>1$. Then
  $$
    \lim_{\eps\to0}\lim_{n\to\infty} P(\exists\, k\in \cK^{*}_{n,t}:\, |x-\kn|<\eps) = \frac{1-\theta}{\alpha-1}<1
  $$
  where $\theta\in(0,1)$ is defined through $\theta e^{-\theta}=\alpha e^{-\alpha}$.
\end{proposition}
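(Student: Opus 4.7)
The plan is to use the first-moment identity $P(V \geq 1) = E[V]/E[V \mid V\geq 1]$, where
\[
V := \#\{k:\, M_k = 0,\ |k/n-x| < \eps\}, \qquad M_k := \#\{i:\, Y_t^i \geq r - ck/n\} - k,
\]
so that $\{\exists\, k\in\cK^*_{n,t}:\, |k/n-x|<\eps\} = \{V \geq 1\}$. Setting $H(u) := 1 - F_t(r-cu)$ and $Z_i := (r-Y_t^i)/c$, the hypotheses yield $H(x)=x$ and $H'(x) = \alpha$, so $E[M_k] = n(H(k/n) - k/n) \approx (\alpha-1)(k-nx)$ near $k=nx$. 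Moreover, the integer-valued increment $M_{k+1}-M_k+1 = \#\{i:\, Z_i \in (k/n,(k+1)/n]\}$ is $\mathrm{Binomial}(n, \alpha/n + o(1/n))$, and since distinct lattice-$k$ increments correspond to disjoint intervals of the $Z_i$'s, they are asymptotically iid $\mathrm{Poisson}(\alpha)-1$. Hence $(M_{\lfloor nx\rfloor + j})_j$ converges locally to a random walk with positive drift $\alpha - 1$ and minimal step $-1$.

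For $E[V]$, I would apply the local CLT to $N_k = \#\{i:\, Y_t^i \geq r-ck/n\} \sim \mathrm{Binomial}(n, H(k/n))$, giving
\[
P(M_k=0) = P(N_k=k) \approx \frac{1}{\sqrt{2\pi nx(1-x)}} \exp\!\left(-\frac{(\alpha-1)^2 (k-nx)^2}{2 nx(1-x)}\right),
\]
and Gaussian-integrate over $k$ to obtain $E[V] \to 1/(\alpha-1)$. The integrand is concentrated on the $\sqrt{n}$-scale around $k=nx$, well inside the window of width $2n\eps$, so the value of $\eps$ does not enter the limit.

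For $E[V\mid V\geq 1]$, I would invoke strong Markov: on $\{V \geq 1\}$ define $\tau := \min\{k:\, M_k=0,\ |k/n-x|<\eps\}$. Because the walk is transient (drift $\alpha-1>0$), conditionally on $V\geq 1$ the count $V-1$ of zero visits at times $>\tau$ has, in the limit, the same law as the number of returns to $0$ of a random walk $(S_j)_{j\geq 1}$ with iid $\mathrm{Poisson}(\alpha) - 1$ steps and $S_0 = 0$. Hence $E[V\mid V\geq 1] = G := \sum_{j\geq 0} P(S_j = 0)$. Since $\xi_1 + \cdots + \xi_j \sim \mathrm{Poisson}(j\alpha)$, one has $P(S_j = 0) = (j\alpha)^j e^{-j\alpha}/j!$; the tree-function identity $\sum_{j\geq 0} j^j z^j/j! = (1-T(z))^{-1}$ where $T e^{-T} = z$ on the principal branch, applied at $z = \alpha e^{-\alpha}$ with $T(z) = \theta \in (0,1)$, yields $G = 1/(1-\theta)$. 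Combining gives the limit $E[V]/G = (1-\theta)/(\alpha-1)$; the strict inequality $<1$ follows from monotonicity, since this expression decreases from $1$ at $\alpha = 1^+$ to $0$ as $\alpha \to \infty$.

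The hard part will be rigorizing the strong-Markov step in the double limit: one must show that the joint law of $(V, \tau, (M_{\tau+j})_j)$ converges suitably to the infinite-horizon walk and that boundary effects at the window edges are negligible. Since zero visits of a transient walk accumulate on an $O(1)$ time scale after the first, and $\tau - (nx-n\eps)$ is typically of order $n\eps$, the remaining window has width $\Omega(n\eps)\to\infty$, so a coupling or truncation argument should suffice once uniform estimates on $P(S_j = 0)$ are in hand.
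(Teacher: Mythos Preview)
Your approach is sound and genuinely different from the paper's. The paper's proof is extremely short: it observes that the event $A_{n,\eps}$ depends only on the values of $F_t$ in an $\eps$-neighborhood of $x$ (via the substitution $U^i = F_t(Y_t^i)$), modifies $F_t$ outside that neighborhood so that $\{0,x,1\}$ are the only solutions of $u + F_t(r-cu) = 1$, rephrases the problem as the probability that the empirical c.d.f.\ of $G(u) := 1 - F_t(r-cu)$ crosses the diagonal near $x$, and then simply invokes \cite[Theorem~1]{NairSheppKlass.86} with $\alpha = G'(x)$ to read off the limit $\frac{1-\theta}{\alpha-1}$.

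What you are doing is essentially re-deriving the Nair--Shepp--Klass formula from scratch. Your decomposition $P(V\geq 1) = E[V]/E[V\mid V\geq 1]$, the local CLT computation for $E[V]$ (which mirrors the paper's proof of Proposition~\ref{pr:expectedNumber} without the extra $e^{-\alpha}$ factor coming from the second condition in $\cK_{n,t}$), and the identification of $(M_{\tau+j})_j$ with a Poisson$(\alpha)-1$ random walk together with the tree-function evaluation of its Green's function, are all correct in spirit and lead to the right answer. Your route is more self-contained and explains the structure of the formula; the paper's route is a two-line reduction to an existing theorem.

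You correctly flag the nontrivial step. Two points worth emphasizing when you fill it in: (i) $(M_k)$ is a time-inhomogeneous Markov chain (given $M_k=m$, the increment $M_{k+1}-M_k+1$ is Binomial$\bigl(n-k-m, [F_t(r-ck/n)-F_t(r-c(k+1)/n)]/F_t(r-ck/n)\bigr)$), so strong Markov is available without approximation---what needs work is the \emph{uniformity} of the Poisson limit of the increments over $k$ with $|k/n-x|<\eps$ and $M_k=O(1)$; (ii) passing from convergence in law of the post-$\tau$ walk to convergence of $E[V\mid V\geq 1]$ requires uniform integrability, which you can get from the exponential decay $P(S_j=0)\sim e^{j(\log\alpha - \alpha + 1)}/\sqrt{2\pi j}$ together with a matching finite-$n$ tail bound on $P(M_{\tau+j}=0\mid \tau)$. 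For the strict inequality $<1$, your monotonicity argument is fine; alternatively, $\frac{1-\theta}{\alpha-1}<1$ is equivalent to $\alpha+\theta>2$, which follows from $\log(\alpha/\theta)=\alpha-\theta$ and the elementary inequality $\log r > 2(r-1)/(r+1)$ for $r>1$.
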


\begin{proof}
  We first observe the local nature of the claim. Indeed, introducing the uniform random variables $U^{i}=F_{t}(Y_t^{i})$ we see that the event
  \begin{align*}
  A_{n,\eps}
  &=\{\exists\, k\in \cK^{*}_{n,t}:\, |x-\kn|<\eps\}\\
  &= \{ \exists\, 0\leq k\leq n:\, \#\{Y_t^i+c\smsp \tfrac{k}{n}\geq r\}=k,\;|x-\kn|<\eps\}\\
  &= \{ \exists\, 0\leq k\leq n:\, \#\{U^i\geq F_{t}(r-c\smsp \tfrac{k}{n})\}=k,\;|x-\kn|<\eps\} 
\end{align*}
  depends only on the values of $F_{t}$ in an $\eps$-neighborhood of $x$. In particular, for $\eps$ small enough, we may change $F_{t}$ outside that neighborhood to guarantee that the set of solutions of $u+F_{t}(r-cu)=1$ is $\{0,x,1\}$.

  Considering the c.d.f.\ $G(u)=1-F_{t}(r-cu)$, the proposition can be rephrased as the probability of having no crossings of the empirical distribution of $G$ and the (theoretical) uniform distribution near $x$:
  \begin{align*}
  A_{n,\eps}
  =\{ \exists\, t\in[0,1]:\, \tfrac{1}{n}\#\{G^{-1}(U^i)\leq t\}=t,\;|x-t|<\eps\}.
\end{align*}  
  (To see this identity, note that $\tfrac{1}{n}\#\{G^{-1}(U^i)\leq t\}=t$ implies  $t=k/n$ for some $0\leq k\leq n$.) Following~\cite{NairSheppKlass.86}, this problem can be related to boundary-crossing probabilities of Poisson processes which turn out to be computable. In particular, after changing $F_{t}$ as outlined above, the conditions of~\cite[Theorem~1]{NairSheppKlass.86} are satisfied for $G$ and noting that $\alpha=G'(x)$, this theorem yields the result.
\end{proof}

In view of $\cK_{n,t}\subseteq \cK^{*}_{n,t}$, we have the following consequence (see also Figure~\ref{fig:bounds}).
 
\begin{corollary}\label{co:decreasingLessOne}
  Fix $t\geq0$ and let $x\in[0,1]$ satisfy $x+F_{t}(r-cx)=1$. If~$x$ is strongly decreasing-transversal, then
  $$
    \lim_{\eps\to0}\lim_{n\to\infty} P(\exists\, k\in \cK^{}_{n,t}:\, |x-\kn|<\eps)<1.
  $$
\end{corollary}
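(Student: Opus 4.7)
The corollary is essentially a direct consequence of the preceding proposition via the inclusion $\cK_{n,t}\subseteq \cK^{*}_{n,t}$, which holds because the condition defining $\cK_{n,t}$ (namely $G_{n,t}(k)=k=G_{n,t}(k-1)$) is strictly more restrictive than the one defining $\cK^{*}_{n,t}$ (namely $G_{n,t}(k)=k$). So my plan is simply to pass this inclusion through the probabilities and the double limit.

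Concretely, I would first note that strong decreasing-transversality guarantees $\alpha := c f_{t}(r-cx) > 1$, and also forces $x \in (0,1)$ (since the density condition is impossible at the endpoints of the solution set of $u + F_{t}(r-cu)=1$). Hence all the hypotheses of Proposition~\ref{pr:NairSheppKlass} are satisfied for this $x$.

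Next, for every $n$ and $\eps > 0$, the set inclusion yields
\begin{equation*}
  \{\exists\, k\in \cK_{n,t}:\, |x-\tfrac{k}{n}|<\eps\} \;\subseteq\; \{\exists\, k\in \cK^{*}_{n,t}:\, |x-\tfrac{k}{n}|<\eps\},
\end{equation*}
and therefore the probabilities satisfy the analogous inequality. Taking $\limsup$ in $n$ on the left and the genuine limit in $n$ on the right (which exists by the proposition), then letting $\eps \to 0$ in both monotonically, I obtain
\begin{equation*}
  \lim_{\eps\to0}\limsup_{n\to\infty} P(\exists\, k\in \cK_{n,t}:\, |x-\tfrac{k}{n}|<\eps) \;\leq\; \lim_{\eps\to0}\lim_{n\to\infty} P(\exists\, k\in \cK^{*}_{n,t}:\, |x-\tfrac{k}{n}|<\eps) \;=\; \frac{1-\theta}{\alpha-1} < 1,
\end{equation*}
which is the desired conclusion (since $\limsup$ suffices to rule out the limit being $1$; indeed the corollary's statement only asks for the limit to be strictly less than $1$, and one can replace $\lim$ by $\limsup$ if existence is uncertain).

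There is really no obstacle here: all of the analytic content — the explicit formula $(1-\theta)/(\alpha-1)$, the reduction to empirical-process/Poisson boundary-crossing probabilities via \cite{NairSheppKlass.86}, and the localization trick that lets one modify $F_{t}$ outside a neighborhood of $x$ — has already been absorbed into Proposition~\ref{pr:NairSheppKlass}. The corollary is a one-line monotonicity corollary, and the only thing to verify beyond the inclusion is that strong decreasing-transversality correctly triggers the hypothesis $\alpha > 1$ of the proposition.
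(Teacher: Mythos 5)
Your proposal is correct and matches the paper exactly: the paper derives the corollary in one line from the inclusion $\cK_{n,t}\subseteq \cK^{*}_{n,t}$ together with the bound $\frac{1-\theta}{\alpha-1}<1$ of Proposition~\ref{pr:NairSheppKlass}, which is precisely your argument. Your added remarks (that strong decreasing-transversality forces $\alpha>1$ and $x\in(0,1)$, and that $\limsup$ suffices if existence of the limit is in doubt) are sensible but not substantively different from what the paper leaves implicit.
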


\begin{figure}[bth]
  \centering
  \includegraphics[scale=.60]{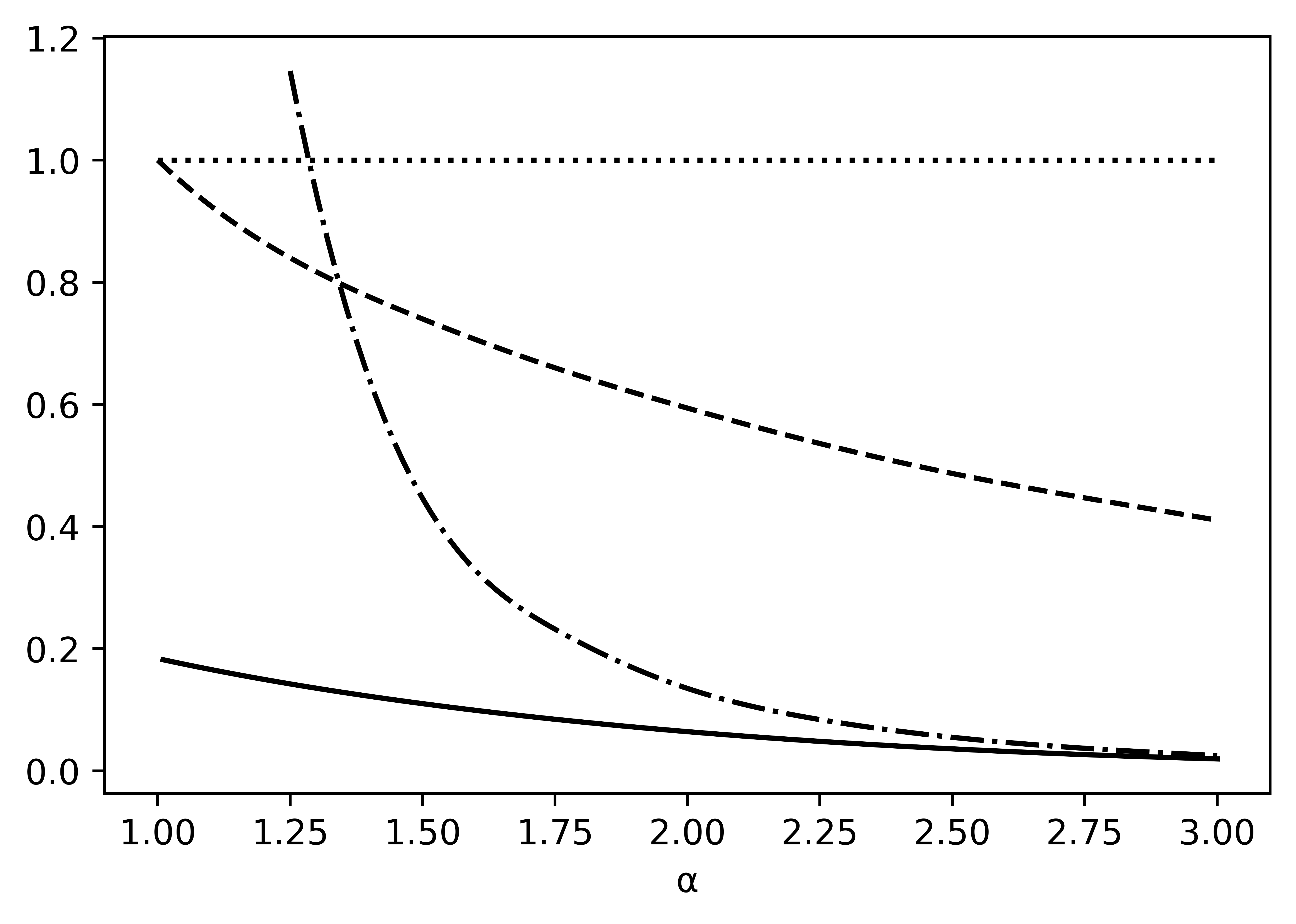}
\caption{Bounds for the probability of finding an $n$-player equilibrium near~$x$ as in Corollary~\ref{co:decreasingLessOne}. The dashed and dashed-dotted lines are the upper bounds derived from Proposition~\ref{pr:NairSheppKlass} and Proposition~\ref{pr:expectedNumber}, respectively. The solid line is the lower bound from Proposition~\ref{pr:limitProbLowerBound}.}
\label{fig:bounds}
\end{figure}

\begin{remark}\label{rk:weakerEquilibCond}
  One can ask if the non-existence result is related to the convention made in Section~\ref{se:nPlayer} that players do not consider their own impact on the state process. To address this question, we can drop the first equation in the equilibrium conditions~\eqref{eq:NplayerEquilibCond} and keep only the second (which seems uncontroversial); i.e., $\#\{Y_t^i+c\smsp \frac{k}{n}< r\}=n-k$.
  This corresponds to the definition of $\cK^{*}_{n,t}$ and Proposition~\ref{pr:NairSheppKlass} shows that non-existence holds even under this condition alone.
\end{remark}

\begin{remark}\label{rk:tangentialDecreasingCase}
  Heuristics suggest that in the tangential case of a decreasing-transversal $x$ with $\alpha=1$, the limiting probability is $1$; i.e., the equilibrium is in fact a limit of $n$-player equilibria. The tangential case is less important because it generically does not occur, in the same sense as discussed below Definition~\ref{de:H}.  We do not provide a rigorous result.
\end{remark}

In our last result, we determine the asymptotic expected number of equilibria close to $x$ (for both increasing- and decreasing-transversal cases). Importantly, it implies that this number is positive with positive probability. When $\alpha>1$ is not close to $1$, it also yields a fairly accurate upper bound for the probability of not finding an $n$-player equilibrium close to~$x$ (cf.~Example~\ref{ex:tent}) since the probability of finding more than one solution is small. On the other hand, we see that as $\alpha\to1$, the expected number of solutions tends to infinity, and in particular the probability of finding many solutions becomes large\footnote{In fact, one can show that $\lim_{\alpha\to1}\limsup_{n\to\infty} P(\#\{\cK_{n,t}\cap |x-\kn|<\eps\}=j)=0$ for all finite $j\geq0$ when $\eps>0$ is small enough.}.
\begin{proposition}\label{pr:expectedNumber}
  Fix $t\geq0$ and let $x\in(0,1)$ satisfy $x+F_{t}(r-cx)=1$. Let $\alpha:=cf_{t}(r-cx)$ and assume that $\alpha\neq1$. Then
  $$
    \lim_{\eps\to0}\lim_{n\to\infty} E[\#\{k\in \cK_{n,t}:\, |x-\kn|<\eps\}]= \frac{e^{-\alpha}}{|1-\alpha|}.
  $$
  In particular,
  $$
    \limsup_{\eps\to0}\limsup_{n\to\infty} P(\exists\, k\in \cK_{n,t}:\, |x-\kn|<\eps)\leq \frac{e^{-\alpha}}{|1-\alpha|}.
  $$
\end{proposition}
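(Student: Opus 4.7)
The plan is to express the expected count as $\sum_{k:\,|k/n-x|<\eps} P(k\in\cK_{n,t})$ and evaluate each summand asymptotically. First, I would identify $\{k\in\cK_{n,t}\}$ as a three-type multinomial event: out of the i.i.d.\ samples $Y_t^1,\dots,Y_t^n$, exactly $k$ must satisfy $Y_t^i\geq r-c(k-1)/n$, exactly $n-k$ must satisfy $Y_t^i<r-ck/n$, and \emph{none} may lie in the gap $[r-ck/n,\, r-c(k-1)/n)$. Setting $p_k:=1-F_t(r-c(k-1)/n)$, $q_k:=F_t(r-ck/n)$, and $r_k:=1-p_k-q_k$, this gives
$$P(k\in\cK_{n,t})=\binom{n}{k}p_k^k q_k^{n-k}.$$

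The second step is the factorization
$$P(k\in\cK_{n,t})=\left(1-\frac{r_k}{1-p_k}\right)^{n-k}\binom{n}{k}p_k^k(1-p_k)^{n-k},$$
which isolates a ``no-gap'' correction from an honest binomial p.m.f. Using continuity of $f_t$ and the mean value theorem, $r_k=(\alpha/n)(1+o(1))$ uniformly for $|k/n-x|<\eps$ with $\eps$ small, while $1-p_k\to 1-x$ and $n-k=n(1-x)(1+o(1))$; hence the first factor converges to $e^{-\alpha}$.

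For the binomial factor I would invoke the local central limit theorem. Taylor expansion at $r-cx$ gives $p_k=x+\alpha(k/n-x)-\alpha/n+O((k/n-x)^2)$, so $k-np_k=(1-\alpha)(k-nx)+O(1)$, and combined with $p_k(1-p_k)\to x(1-x)$ the LCLT yields
$$\binom{n}{k}p_k^k(1-p_k)^{n-k}=\frac{1+o(1)}{\sqrt{2\pi nx(1-x)}}\exp\!\left(-\frac{(1-\alpha)^2(k-nx)^2}{2nx(1-x)}\right)$$
uniformly on, say, $|k-nx|\leq n^{2/3}$. Chernoff bounds control the tail $|k-nx|\gg\sqrt{n}$, and the remaining Riemann sum with step size $1$ converges (since the Gaussian width $\sqrt{nx(1-x)}/|1-\alpha|$ diverges) to the integral
$$\int_\R\frac{1}{\sqrt{2\pi nx(1-x)}}\exp\!\left(-\frac{(1-\alpha)^2 s^2}{2nx(1-x)}\right)ds=\frac{1}{|1-\alpha|}.$$
Multiplying by $e^{-\alpha}$ yields the first limit, provided $\eps$ is small enough that no competing solution of $u+F_t(r-cu)=1$ lies in $[x-\eps,x+\eps]$.

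The ``in particular'' inequality is Markov's inequality: $\mathbf{1}_{\{\exists k\in\cK_{n,t}:\,|x-k/n|<\eps\}}\leq\#\{k\in\cK_{n,t}:\,|x-k/n|<\eps\}$, so the probability is dominated by the expected count and the iterated $\limsup$ inherits the bound. The main technical obstacle I foresee is the uniform LCLT, since $p_k$ itself drifts with $k$; this should be handled by a uniform-in-parameter LCLT (e.g.\ via Edgeworth expansion) together with Chernoff tail bounds that pre-restrict the effective summation range.
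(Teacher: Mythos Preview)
Your proposal is correct and follows essentially the same route as the paper. You derive the same closed form $P(k\in\cK_{n,t})=\binom{n}{k}p_k^{\,k}q_k^{\,n-k}$ (the paper obtains it via reverse order statistics rather than your multinomial argument, but the formula is identical), extract the same $e^{-\alpha}$ correction factor, and then identify the residual sum as a Gaussian Riemann sum converging to $1/|1-\alpha|$. The only substantive difference is packaging: the paper carries out the Gaussian approximation by hand via Stirling and explicit second-order Taylor bounds on $\log\varphi$, which sidesteps the parameter-dependent LCLT you flag as the main obstacle; your LCLT-plus-Chernoff route is cleaner to state but requires exactly that uniformity, so the paper's explicit approach is in effect the verification of the lemma you would need.
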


One consequence of Proposition~\ref{pr:expectedNumber} is that non-uniqueness is indeed the typical case for the $n$-player game, as claimed in the Introduction: under the stated smoothness assumption on $F_{t}$, we typically have at least one mean field equilibrium corresponding to $0\neq\alpha<1$ and then the proposition and Lemma~\ref{le:convToIncreasingStatic} imply that there is more than one $n$-player equilibrium, for large~$n$.

\begin{proof}[Proof of Proposition~\ref{pr:expectedNumber}.]
We may assume that $c=1$, and we drop the index $t$ everywhere. We denote 
$$
  \alpha(z)=f(r-z)
$$
and recall that $x\in(0,1)$ and $\alpha=\alpha(x)\neq1$. Fix $\eps>0$ and denote
$$
  x_{-}=x-\eps,\quad x_{+}=x+\eps,
$$
$$
  F_{-}=F(r-x-\eps),\quad F_{+}=F(r-x+\eps),
$$
$$
  \alpha_{-}=\inf_{|z-x|<\eps} \alpha(z),\quad \alpha_{+}=\sup_{|z-x|<\eps} \alpha(z),
$$
$$
  m(z)=\inf_{|z-x|\leq\eps} z(1-z),\quad M(z)=\sup_{|z-x|\leq\eps} z(1-z).
$$
We assume that $\eps$ is small enough such that $x_{\pm}\in(0,1)$ and $1\notin[\alpha_{-},\alpha_{+}]$.

\vspace{.5em}

\emph{Step 1: Bounds for $P(k\in\cK_{n})$.} Fix $n$ and let $U^{i}=F(Y^{i})$, $1\leq i\leq n$ so that $(U^{i})$ are i.i.d.\ $\Unif[0,1]$, and let $U^{(1)}\geq \dots\geq  U^{(n)}$ be the associated reverse order statistics. Noting that $U^{(k)}=U_{(n-k+1)}$ for the usual (increasing) order statistics $U_{(\cdot)}$, we have that $U^{(k)}\sim \Beta(n-k+1,k)$ and $U^{(k+1)}=U^{(k)}W_{k}^{\frac{1}{n-k}}$ where $W_{k}\sim \Unif[0,1]$ is independent; cf.\ \cite[Section~4]{ArnoldEtAl.13}.
Moreover, we note that $k\in\cK_{n}$ is equivalent to
\begin{equation}\label{eq:orderStat1}
  U^{(k)}\geq F(r-\konen)=:F_{k-1} \quad\mbox{and}\quad U^{(k+1)}\leq F(r-\kn)=:F_{k}.
\end{equation} 
As a result, for any deterministic integer $1\leq k\leq n$,
\begin{align}
  P(k\in\cK_{n}) 
  &= P\big(U^{(k+1)}\leq F_{k},\, U^{(k)}\geq F_{k-1}\big)\nonumber\\
  &= \int_{F_{k-1}}^{1} P\big(U^{(k+1)}\leq F_{k}\big|U^{(k)}=z\big)\, dP(U^{(k)}=z)\nonumber\\
  &= \int_{F_{k-1}}^{1} P\big(W\leq (F_{k}/z)^{n-k}\big|U^{(k)}=z\big)\, dP(U^{(k)}=z)\nonumber\\
  &= \frac{n!}{(n-k)!(k-1)!} \int_{F_{k-1}}^{1} F_{k}^{n-k}(1-z)^{k-1}\,dz\nonumber\\
  &= \binom{n}{k}F_{k}^{n-k}(1-F_{k-1})^{k} \label{eq:orderStat2}
\end{align}
where $dP(U^{(k)}=z)$ indicates integration with respect to the law of $U^{(k)}$. We may observe that this quantity is reminiscent of a binomial distribution except that the success probability changes with $k$.
Next, we use Taylor's theorem to find that
\begin{equation}\label{eq:alphaTaylor}
  F_{k-1}=F(r-\kn+\tfrac{1}{n})=F(r-\kn)+\alpha_{k}/n=F_{k}+\alpha_{k}/n
\end{equation}
where $\alpha_k=\alpha(\eta_k)$ with $\eta_k \in [\frac{k-1}{n},\frac{k}{n}]$ and in particular $\alpha_{k}\in[\alpha_{-},\alpha_{+}]$. Now suppose that $|x-\kn|<\eps$. Then $k\geq nx_{-}$, and using also $F_{k}\geq F_{-}$,
\begin{align*}
  P(k\in\cK_{n}) 
  &= \binom{n}{k}F_{k}^{n-k}(1-F_{k}-\alpha_{k}/n)^{k}\\
  &= \binom{n}{k}F_{k}^{n-k}(1-F_{k})^{k}\left(1-\frac{\alpha_{k}}{(1-F_{k})n}\right)^{k}\\
  &\leq \binom{n}{k}F_{k}^{n-k}(1-F_{k})^{k}\left(1-\frac{\alpha_{-}}{(1-F_{-})n}\right)^{nx_{-}}.
\end{align*}
The fact that
$$
  (1-y) \leq e^{-y} \leq (1-y)(1+o(y))
$$
as $y\to0$ applied with $y=w/n$ yields
$$
  (1-\tfrac{w}{n})^{n} \leq e^{-w} \leq (1-\tfrac{w}{n})^{n}\,(1+O(1/n))
$$
as $n\to\infty$, uniformly over $w$ in a compact interval. This leads us to the upper bound
\begin{align}\label{eq:proofStep1Conclusion}
  P(k\in\cK_{n}) 
  &\leq \binom{n}{k}F_{k}^{n-k}(1-F_{k})^{k} e^{-{\frac{\alpha_{-}x_{-}}{1-F_{-}}}}.
\end{align}
Similarly, we have the lower bound
\begin{align*}
  P(k\in\cK_{n})
  &\geq \binom{n}{k}F_{k}^{n-k}(1-F_{k})^{k}\left(1-\frac{\alpha_{+}}{(1-F_{+})n}\right)^{nx_{+}}\\
  &\geq \binom{n}{k}F_{k}^{n-k}(1-F_{k})^{k}e^{-{\frac{\alpha_{+}x_{+}}{1-F_{+}}}}\,(1+O(1/n)).
\end{align*}

\vspace{.5em}

\emph{Step~2: Decay away from $x$.} 
Let us recall Robbin's version~\cite{Robbins.55} of the Stirling approximation,
\begin{equation}\label{eq:robbins}
  \sqrt{2\pi n} (\tfrac{n}{e})^{n} e^{\tfrac{1}{12n+1}} \leq n! \leq \sqrt{2\pi n} (\tfrac{n}{e})^{n} e^{\tfrac{1}{12n}}, 
\end{equation} 
showing in particular that $n! = \sqrt{2\pi n}\, (\tfrac{n}{e})^{n}\,(1+O(1/n))$. 
Since $n-k$ and $k$ are comparable to $n$ when $|x-\kn|<\eps$, we have
$$
  \binom{n}{k}= (1+O(1/n)) \frac
  {\sqrt{2\pi n}\, (\tfrac{n}{e})^{n}}
  {\sqrt{2\pi (n-k)}\, (\tfrac{n-k}{e})^{(n-k)}\, \sqrt{2\pi k}\, (\tfrac{k}{e})^{k}}
$$
uniformly over all $k$ such that $|x-\kn|<\eps$.
This shows that
\begin{align}\label{eq:proofZsum}
    Z_{n,\eps}
    &:=\sum_{k:\, |x-\kn|<\eps} \binom{n}{k}F^{n-k}_{k}(1-F_{k})^{k}\nonumber\\
    & =(1+O(1/n))\sum_{k:\, |x-\kn|<\eps} \frac{1}{\sqrt{2\pi n(1-\kn)\kn}}\, 
    \frac{F^{n-k}_{k}(1-F_{k})^{k}}{(1-\kn)^{n-k}(\kn)^{k}}\nonumber\\
    &\leq \frac{1+O(1/n)}{\sqrt{m(x)}}\sum_{k:\, |x-\kn|<\eps} \frac{1}{\sqrt{2\pi n}}\, 
    \frac{F^{n-k}_{k}(1-F_{k})^{k}}{(1-\kn)^{n-k}(\kn)^{k}}.
\end{align}
Our next goal is to estimate the summand above. We introduce the function
$$
  \varphi(z)=(1-z)^{n-k}z^{k}
$$
so that 
\begin{equation}\label{eq:proofFraction}
\frac{F^{n-k}_{k}(1-F_{k})^{k}}{(1-\kn)^{n-k}(\kn)^{k}}=\frac{\varphi(1-F_{k})}{\varphi(\kn)}
\end{equation}
is the term in question.
We can use Taylor's theorem similarly as above to find
$$
  F_{k}=F(r-\kn)=F(r-x+x-\kn)=F(r-x)+\tilde{\alpha}_{k}(x-\kn)
$$
where $\tilde{\alpha}_{k}\in[\alpha_{-},\alpha_{+}]$. As $F(r-x)=1-x$, this equality can be rewritten as
$$
  F_{k}=1-\kn+(\tilde{\alpha}_{k}-1)(x-\kn).
$$
Introducing also
$$
  \psi(z)=\log\varphi(z)=(n-k)\log(1-z)+ k \log z,
$$
we have
$$
  \psi'(z) = -\frac{n-k}{1-z}+\frac{k}{z},\quad \psi''(z) = -n\left[\frac{1-\kn}{(1-z)^{2}}+\frac{\kn}{z^{2}}\right]<0
$$
and then $\psi'(k/n)=0$ shows that $\psi$ and $\varphi$ have a global maximum at~$k/n$. 
%
Taylor's theorem at the second order yields
$$
  \psi(1-F_{k})-\psi(\kn)=\psi(\kn-(\tilde{\alpha}_{k}-1)(x-\kn))-\psi(\kn)=\frac{\psi''(\xi_{k})}{2} (\tilde{\alpha}_{k}-1)^{2}(x-\kn)^{2}
$$
for a suitable number $\xi_{k}$ between $\kn$ and $\kn-(\tilde{\alpha}_{k}-1)(x-\kn)$. Therefore, we have 
$|\xi_{k}-x|<A\eps$, with $A=\max\{\alpha_+,1\}$. Using the above formula for $\psi''(z)$ and setting
$$
  \Gamma_{\eps}=\inf_{\substack{|p-x|<\eps\\|z-x|<A\eps}} \left[\frac{1-p}{(1-z)^{2}}+\frac{p}{z^{2}}\right],
$$
we arrive at
$$
  \psi(1-F_{k})-\psi(\kn)\leq -\frac{n}{2} \Gamma_{\eps} (\tilde{\alpha}_{k}-1)^{2}(x-\kn)^{2}.
$$
Setting also $\alpha_{*}=\alpha_{-}$ if $\alpha>1$ and $\alpha_{*}=\alpha_{+}$ if $\alpha<1$, exponentiating leads us to the desired estimate
$$
  \frac{\varphi(1-F_{k})}{\varphi(\kn)} \leq \exp \left(-\frac{n}{2} \Gamma_{\eps} (\alpha_{*}-1)^{2}(x-\kn)^{2}\right)
$$
and plugging this into~\eqref{eq:proofZsum} we have that 
\begin{align*}
    Z_{n,\eps} 
    &\le \frac{1+O(1/n)}{\sqrt{m(x)}}\sum_{k:\, |x-\kn|<\eps} \frac{1}{\sqrt{2\pi n}}\, \exp \left(-\frac{n}{2} \Gamma_{\eps} (\alpha_{*}-1)^{2}(x-\kn)^{2}\right).
\end{align*}
Set 
$$
  w_{k}= \sqrt{n\Gamma_{\eps}}|\alpha_{*}-1|(\kn-x)
$$
and note that
$$
  \Delta w: = w_{k}-w_{k-1}=\frac{1}{\sqrt{n}}\sqrt{\Gamma_{\eps}}|\alpha_{*}-1|.
$$
The above sum can then be written as 
\begin{align*}
    Z_{n,\eps} 
    & \le \frac{1+O(1/n)}{\sqrt{m(x)}}\sum_{k:\, |w_{k}|<\sqrt{n\Gamma_{\eps}}|\alpha_{*}-1|\eps} 
    \frac{1}{\sqrt{2\pi n}}\, e^{-w_{k}^{2}/2}\\
    &= \frac{1+O(1/n)}{\sqrt{m(x)\Gamma_{\eps}}}\frac{1}{|\alpha_{*}-1|}
    \sum_{k:\, |w_{k}|<\sqrt{n\Gamma_{\eps}}|\alpha_{*}-1|\eps} \frac{1}{\sqrt{2\pi}}\, e^{-w_{k}^{2}/2} \Delta w
\end{align*}
which suggests comparison with a Gaussian integral $\int_{\R}\frac{1}{\sqrt{2\pi}}\, e^{-w^{2}/2}\,dw=1$. Indeed, after subtracting the two largest summands neighboring the origin, the sum can be seen as a Riemann sum which is entirely below the integral. These two summands are $O(1/\sqrt{n})$ so that
$$
  \sum_{k:\, |w_{k}|<\sqrt{n\Gamma_{\eps}}|\alpha_{*}-1|\eps} \frac{1}{\sqrt{2\pi}}\, e^{-w_{k}^{2}/2} \Delta w \leq 1+O(1/\sqrt{n})
$$
and finally
\begin{align*}
    Z_{n,\eps} 
    & \leq \frac{1}{\sqrt{m(x)\Gamma_{\eps}}}\frac{1}{|\alpha_{*}-1|} \, (1+O(1/\sqrt{n})).
\end{align*}

\vspace{.5em}

\emph{Step 3: Conclusion.} Recalling~\eqref{eq:proofStep1Conclusion} we have
\begin{align*}
E[\#\{k\in \cK_{n}:\, |x-\kn|<\eps\}] 
& = \sum_{k:\, |x-\kn|<\eps} P(k\in\cK_{n}) \\
& \leq e^{-{\frac{\alpha_{-}x_{-}}{1-F_{-}}}} Z_{n,\eps}\\
&  \leq e^{-{\frac{\alpha_{-}x_{-}}{1-F_{-}}}} \frac{1}{\sqrt{m(x)\Gamma_{\eps}}}\frac{1}{|\alpha_{*}-1|} \, (1+O(1/\sqrt{n}))
\end{align*}
and hence 
\begin{align*}
\limsup_{n\to\infty} E[\#\{k\in \cK_{n}:\, |x-\kn|<\eps\}] 
& \leq e^{-{\frac{\alpha_{-}x_{-}}{1-F_{-}}}} \frac{1}{\sqrt{m(x)\Gamma_{\eps}}}\frac{1}{|\alpha_{*}-1|}.
\end{align*}
As $\eps\to0$ we have $x_{-}\to x$, $\alpha_{-}\to\alpha$, $\alpha_{*}\to\alpha$, $F_{-}\to F(r-x)=1-x$ and
$$
  m(x)\to x(1-x),\quad \Gamma_{\eps}\to \frac{1}{1-x}+\frac{1}{x}=\frac{1}{x(1-x)}.
$$
Thus,
\begin{align*}
\limsup_{\eps\to0}\limsup_{n\to\infty} E[\#\{k\in \cK_{n}:\, |x-\kn|<\eps\}] 
& \leq \frac{e^{-\alpha}}{|\alpha-1|}.
\end{align*}
The matching lower bound follows similarly after replacing $\alpha_{-}$ by $\alpha_{+}$, $F_{-}$ by $F_{+}$, and so on.
\end{proof}

\begin{remark}\label{rk:rootNconvergence}
  The above proof offers insight into the speed of convergence of $n$-player equilibria. Specifically, the estimates entail that if $\eps_{n}\downarrow0$ is such that $\eps_{n}\sqrt{n}\to \beta\in[0,\infty]$, then
  \begin{align*}
    E[\#\{k\in \cK_{n,t}:\, |x-\kn|<\eps_{n}\}] \to \frac{e^{-\alpha}}{|1-\alpha|} \mu\bigg(-\frac{|\alpha-1|}{\sqrt{x(1-x)}}\beta,\frac{|\alpha-1|}{\sqrt{x(1-x)}}\beta\bigg)
  \end{align*}
  where $\mu$ is the standard Gaussian distribution.
  Thus, a ball of radius $r_{n}/\sqrt{n}$ around $x$, where $r_{n}\to\infty$ arbitrarily slowly, will asymptotically contain all $n$-player equilibria converging to $x$, and this is optimal in the sense that if $\limsup r_{n}<\infty$ the ball will miss some solutions.
\end{remark}

In our final result we complement the upper bound in Proposition~\ref{pr:expectedNumber} by a lower bound. The gap between the bounds vanishes for large $\alpha$; see also Figure~\ref{fig:bounds}.

\begin{proposition}\label{pr:limitProbLowerBound}
  Fix $t\geq0$, let $x\in(0,1)$ satisfy $x+F_{t}(r-cx)=1$ and suppose that $\alpha:=cf_{t}(r-cx)>1$. Then
  $$
    \liminf_{\eps\to0}\liminf_{n\to\infty} P(\exists\, k\in \cK_{n,t}:\, |x-\kn|<\eps)\geq L(\alpha)>0
  $$  
  where
  $$
    L(\alpha) = \frac{e^{-\alpha}}{\big(\alpha-1\big)\left(1+2\sqrt{\frac{2}{|a_0|}}\left\{1-\Phi\left(\sqrt{2|a_0|}\right)\right\}\right)}
  $$
  with $a_0:=1-\alpha+\log(\alpha)<0$ and $\Phi$ is the standard normal c.d.f.
\end{proposition}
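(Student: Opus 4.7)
The plan is to apply the second-moment (Paley--Zygmund) inequality
\[
P(N\geq 1)\geq\frac{(EN)^2}{E(N^2)},\qquad N:=\#\bigl\{k\in\cK_{n,t}:\,\bigl|x-\tfrac{k}{n}\bigr|<\eps\bigr\},
\]
and combine it with Proposition~\ref{pr:expectedNumber}, which already supplies $\lim_{\eps\to 0}\lim_n EN=e^{-\alpha}/(\alpha-1)$. The task reduces to a matching asymptotic upper bound on $E(N^2)$, i.e.\ on the pair probabilities $P_{j,k}:=P(j,k\in\cK_{n,t})$.

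Decomposing $E(N^2)=EN+2\sum_{j<k}P_{j,k}$, I would first derive $P_{j,k}$ in closed form from the joint law of the order statistics of $U^i:=F_t(Y^i_t)$. For $j<k$, the joint event forces exactly $j$ uniforms in $[F_{j-1},1]$, exactly $k-j$ in $[F_{k-1},F_j]$, exactly $n-k$ in $[0,F_k]$, and none in the two gaps $(F_j,F_{j-1})$, $(F_k,F_{k-1})$, yielding by multinomial
\[
P_{j,k}=\binom{n}{j,\,k-j,\,n-k}\,(1-F_{j-1})^j\,(F_j-F_{k-1})^{k-j}\,F_k^{n-k}.
\]

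The core step is an asymptotic analysis of $\sum_{j<k}P_{j,k}$ stratified by the gap $l_0=k-j$. Conditioning on $j\in\cK_{n,t}$ reduces the remaining $n-j$ uniforms to iid variables on $[0,F_j]$, and a Stirling/Taylor expansion parallel to Step~1 of the proof of Proposition~\ref{pr:expectedNumber} yields for each fixed $l_0\geq 2$
\[
\lim_n P(j+l_0\in\cK_{n,t}\mid j\in\cK_{n,t})=q(l_0):=\frac{(\alpha(l_0-1))^{l_0}}{l_0!}\,e^{-\alpha l_0},
\]
together with $q(1)=0$ (the two equilibrium conditions in~\eqref{eq:NplayerEquilibCond} are incompatible at unit spacing). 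A direct Stirling estimate further shows that $P_{j,k}$ decays super-exponentially when $l_0$ is macroscopic ($l_0\gtrsim\sqrt n$), so the dominant contribution comes from microscopic clustering. Bounding $q(l_0)\leq e^{l_0 a_0}/\sqrt{2\pi l_0}$ with $a_0=1-\alpha+\log\alpha<0$ (via $(l_0-1)^{l_0}/l_0!\leq e^{l_0}/\sqrt{2\pi l_0}$) and converting the sum to a Gaussian-tail integral through the substitution $s=t^2/(2|a_0|)$ gives
\[
\sum_{l_0\geq 1}\frac{e^{l_0 a_0}}{\sqrt{2\pi l_0}}\leq\frac{1}{\sqrt{2\pi}}\int_1^\infty\!\frac{e^{-|a_0|s}}{\sqrt{s}}\,ds=\sqrt{2/|a_0|}\,(1-\Phi(\sqrt{2|a_0|})).
\]
Assembling these estimates yields
\[
\limsup_{\eps\to 0}\limsup_{n\to\infty}\frac{E(N^2)}{EN}\leq 1+2\sqrt{2/|a_0|}\,(1-\Phi(\sqrt{2|a_0|})),
\]
and inserting into Paley--Zygmund delivers precisely $L(\alpha)$.

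The hard part will be the multiscale asymptotic analysis: controlling $P_{j,k}$ uniformly as $l_0$ varies from $O(1)$ to $O(\sqrt n)$, verifying the super-exponential decay in the macroscopic regime so that only the microscopic clusters contribute, and tracking the Stirling constants precisely enough that the final bound matches the closed form $L(\alpha)$ without extraneous factors.
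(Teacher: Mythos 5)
Your proposal is correct and follows essentially the same route as the paper's proof: a second-moment (Paley--Zygmund/Markov) inequality combined with Proposition~\ref{pr:expectedNumber}, an exact expression for the pair probability $P(j\in\cK_{n,t},\,k\in\cK_{n,t})$ (your multinomial derivation gives the same formula the paper obtains by conditioning on order statistics), the bound $e^{a_0\ell}/\sqrt{2\pi\ell}$ on the gap-$\ell$ contribution via Stirling, and the same Gaussian-tail integral comparison yielding $L(\alpha)$. The only immaterial differences are that the paper does not pass through the Poisson-type limit $q(l_0)$ but bounds the ratio directly, and the ``macroscopic $l_0$'' regime you flag as delicate is in fact handled uniformly by the exponential bound since $a_0<0$.
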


Since the lower bound is strictly positive, we can interpret the result as stating that~$x$ is necessarily part of a mixture which is itself a limit of $n$-player equilibria. In summary, when $x$ is strongly decreasing-transversal, we cannot find  $n$-player equilibria converging to $x$ at time~$t$, but at least we can find $n$-player equilibria converging to a randomized mean field equilibrium which charges~$x$.

\begin{proof}[Proof of Proposition~\ref{pr:limitProbLowerBound}]
We use the notation from the proof of Proposition~\ref{pr:expectedNumber} and suppress~$t$.
Let $\cK= \cK_{n,t}$  and $X=X_{n,\eps}=\#\{k\in \cK_{n,t}:\, |x-\frac{k}{n}|\le\eps\}$. Set $\mu=E[X]$ and let
$$
A=A_{n,\eps}=\{|X-c\mu|\ge c\mu\}
$$
for a constant $c>0$ to be chosen later. Clearly $P(X=0)\le P(A)$. Using the Markov inequality
$$
P(|X-c\mu|\ge c\mu)\le \frac{E\left((X-c\mu)^2\right)}{c^2 \mu^2}=
\frac{E\left(X^2\right)}{c^2 \mu^2}-\frac{2}{c}+1=
\frac{2}{c}\left(\frac{E\left(X^2\right)}{2c \mu^2}-1\right)+1
$$
and choosing $c=\theta\frac{E[X^2]}{2\mu^2}$ for some $\theta>1$, we obtain that
$$
P(X=0)\le1-\frac{4\mu^2(\theta-1)}{\theta^2E[X^2]}.
$$
Optimizing over the right-hand side, we note that $\theta=2$ yields the best bound, so we choose $c=\frac{E[X^2]}{\mu^2}$ and conclude that
\begin{equation}\label{eq:prooflimitProbLower1}
P(X=0)\le 1-\frac{\mu^2}{E[X^2]}= 1-\frac{E[X]^{2}}{E[X^2]}.
\end{equation}
Since we have already determined the limit of~$E[X]$ in Proposition~\ref{pr:expectedNumber}, our goal is to find an upper bound for~$E[X^2]$. 
To that end, we first compute
$$
P(k\in \cK, j\in \cK)=P(U^{(k+1)}\le F_k, U^{(k)}\ge F_{k-1},U^{(j+1)}\le F_j, U^{(j)}\ge F_{j-1})
$$
for $k<j$; recall the notation of~\eqref{eq:orderStat1}. In fact, this probability is zero for $j=k+1$, so we focus on $k+2\leq j$. Conditionally on $U^{(k+1)}=h< U^{(k)}=u$, the pair
$(U^{(j)},U^{(j+1)})$ has the same distribution as 
$(hV^{(j-(k+1))},hV^{(j-k)})$ where $V^{(\ell)}$ are the reverse order statistics of an i.i.d.\  sample $V_1,\cdots,V_{n-(k+1)}$ of size $n-(k+1)$ and distribution~$\Unif[0,1]$.
Thus, we have 
\begin{align}\label{eq:V1}
&P\left(U^{(j+1)}\le F_j, U^{(j)}\ge F_{j-1} \big| U^{(k+1)}=h, U^{(k)}=u\right)\nonumber\\
&=P\left(V^{j-(k+1)}\le \frac{F_j}{h}, V^{(j-k)}\ge \frac{F_{j-1}}{h}\right).
\end{align}
Clearly $P(V^{(j-k)}\ge \frac{F_{j-1}}{h})=0$ if $F_{j-1} \ge h$, so we only need to consider the case $h\in [F_{j-1},F_k]$. Using the formula developed in~\eqref{eq:orderStat2}, we obtain
\begin{align}\label{eq:V2}
&P\left(U^{(j+1)}\le F_j, U^{(j)}\ge F_{j-1} \big| U^{(k+1)}=h, U^{(k)}=u\right)\nonumber\\
&=\binom{n-(k+1)}{j-(k+1)} \left(\frac{F_j}h\right)^{n-j}\, \left(1-\frac{F_{j-1}}h\right)^{j-(k+1)}.
\end{align}
As above~\eqref{eq:orderStat1}, the joint density of $U^{(k)}$ and $U^{(k+1)}$ can be computed using the fact that $U^{(k)}\sim\Beta(n-k+1,k)$ and $U^{(k+1)}=W_{k}^{\frac{1}{n-k}}U^{(k)}$ where $W_{k}\sim\Unif[0,1]$ is independent of $U^{(k)}$:
$$
dP\left(U^{(k)}=u, U^{(k+1)}=h\right)=k(n-k)\binom{n}{k}(1-u)^{k-1} h^{n-(k+1)} \, \1_{0\le h\le u\le 1}\, du \, dh.
$$
Integrating with respect to this density and using the appropriate restrictions, we deduce that
\begin{align*}
P(k\in \cK, j\in \cK)
&=k(n-k)\binom{n}{k} \binom{n-(k+1)}{j-(k+1)} F_j^{n-j}  \\
&\quad\,\times\int_{F_{k-1}}^1 (1-u)^{k-1} \, du \int_{F_{j-1}}^{F_k} (h-F_{j-1})^{j-(k+1)}\, dh\\
&=\binom{n}{k} (1-F_{k-1})^k \, 
\frac{n-k}{j-k}\binom{n-(k+1)}{j-(k+1)} F_j^{n-j} (F_k-F_{j-1})^{j-k}\\
&=\binom{n}{k}(1-F_{k-1})^k F_k^{n-k} \, \binom{n-k}{j-k} 
\left(\frac{F_j}{F_k}\right)^{n-j} \left(1-\frac{F_{j-1}}{F_k}\right)^{j-k}\\
&\le \binom{n}{k}(1-F_{k-1})^k F_k^{n-k} \, \binom{n-k}{j-k} 
\left(\frac{F_j}{F_k}\right)^{n-j} \left(1-\frac{F_{j}}{F_k}\right)^{j-k}.
\end{align*}
By a repeated application of~\eqref{eq:alphaTaylor} we have that $\frac{F_{j}}{F_k}=1-\frac{\alpha_j (j-k)}{n F_k}$ 
for some $\alpha_j \in [\alpha_-,\alpha_+]$ and hence the last two terms above satisfy
\begin{align*}
\left(\frac{F_j}{F_k}\right)^{n-j} \left(1-\frac{F_{j}}{F_k}\right)^{j-k}
&\leq \left[1-\frac{\alpha_j(j-k)}{n F_k}\right]^{n-j} \left[\frac{\alpha_j(j-k)}{n F_k}\right]^{j-k}\\
&\leq \exp\left(-\alpha_-(j-k)\frac{n-j}{nF_k}\right) (\alpha_+)^{j-k} \left(\frac{j-k}{nF_k}\right)^{j-k}\\
&\leq \exp\left(-\alpha_-(j-k)\frac{1-x_+}{F_+}\right) (\alpha_+)^{j-k} \left(\frac{j-k}{nF_k}\right)^{j-k}.
\end{align*}
On the other hand, Stirling's approximation as in~\eqref{eq:robbins} yields
\begin{align*}
&\binom{n-k}{j-k} \left(\frac{j-k}{nF_k}\right)^{j-k} \!\!\!\!\!\!
= \frac{(n-k)!}{(n-j)! (j-k)! }\left(\frac{j-k}{nF_k}\right)^{j-k}\!\!\!\!\!
\le \left(\frac{n-k}{nF_k}\right)^{j-k}\! \frac{(j-k)^{j-k}}{(j-k)! }\\
&\le \left(\frac{1-x_-}{F_-}\right)^{j-k}\!\!\!\!\!(j-k)^{j-k}\!\left[\left(\frac{(j-k)}{e}\right)^{j-k}\!\!\!\!\!\sqrt{2\pi(j-k)} \exp\left(\frac1{12(j-k)+1}\right)\right]^{-1}\\
&\le \left(\frac{1-x_-}{F_-}\right)^{j-k}\frac{e^{j-k}}{\sqrt{2\pi(j-k)}}\,.
\end{align*}
As a result, we obtain the upper bound
\begin{equation}\label{eq:combEstimate}
P(k\in \cK, j\in \cK)\le \binom{n}{k}(1-F_{k-1})^k F_k^{n-k} \frac{1}{\sqrt{2\pi(j-k)}}
\exp(a(j-k))
\end{equation}
where
$$
a=a(\alpha,\eps):=1-\alpha_-\frac{1-x_+}{F_+}+\log(\alpha_+)+\log\left(\frac{1-x_-}{F_-}\right).
$$
If the following sums run over indices $i$ with $|x-i/n|\le \eps$, we can express the second moment of $X$ as
\begin{align*}
  E[X^{2}] 
  & =  E\left[\left(\sum\nolimits_{k}\1_{k\in\cK}\right)\left(\sum\nolimits_{j}\1_{j\in\cK}\right)\right] 
   = E\left[\sum\nolimits_{k,j}\1_{k\in\cK}\1_{j\in\cK}\right] \\
  & =  E\left[\sum\nolimits_{k}\1_{k\in\cK} + 2\sum\nolimits_{k<j}\1_{k\in\cK}\1_{j\in\cK}\right]\\
  &=  \sum\nolimits_{k} P(k\in\cK) + 2\sum\nolimits_{k<j} P(k\in\cK,\,j\in\cK).
\end{align*} 
Thus, \eqref{eq:combEstimate} leads to 
\begin{align*}
E[X^2]
&=\sum_{k:\, |x-k/n|\le \eps} P(k\in \cK)+
2\sum_{\substack{k,j:\,  j\ge k+2, \\ |x-k/n|\le \eps, \\ |x-j/n|\le \eps}} P(k\in \cK, j\in \cK)\nonumber\\ 
&\le E[X]+ \frac{2}{\sqrt{2\pi}}\, E[X] \sum\limits_{\ell=2}^{n(x_+-x_-)} \frac{1}{\sqrt{\ell}} \, e^{a\ell}.
\end{align*}
Note that $a_{0}:=\lim_{\eps\downarrow 0}a(\alpha,\eps)=1-\alpha+\log(\alpha)$ is strictly negative since $\alpha>1$. Thus, $a=a(\alpha,\eps)<0$ for $\eps$ small enough, so that $\frac{1}{\sqrt{\ell}} \, e^{a\ell}$ is summable.
More precisely,
$$
\frac{1}{\sqrt{2\pi}}  \sum_{\ell=2}^\infty \frac{1}{\sqrt{\ell}} \, e^{a\ell}\le
\frac{1}{\sqrt{2\pi}} \int_1^\infty \frac{1}{\sqrt{\ell}} e^{a\ell} \, d\ell=\sqrt{\frac{2}{|a|}}
\frac{1}{\sqrt{2\pi}} \int_{\sqrt{2|a|}}^\infty e^{\frac{-z^2}{2}} \, dz.
$$
Recalling also that $\lim_{\eps \to 0}\lim_{n\to \infty} E[X]=\frac{e^{-\alpha}}{|\alpha-1|}=:H(\alpha)$ by Proposition~\ref{pr:expectedNumber}, we deduce that
\begin{align*}
\limsup_{\eps \to 0}\limsup_{n\to \infty} E[X^2]
\le H(\alpha)\left(1+2\sqrt{\frac{2}{|a_0|}}\left(1-\Phi\left(\sqrt{2|a_0|}\right)\right)\right)
\end{align*}
and combining this with~\eqref{eq:prooflimitProbLower1} yields the claim.
\end{proof}


\bibliography{stochfin}
\bibliographystyle{plain}

\end{document}